\pgfplotsset{compat=newest}% use newest version
\tikzset{My Line Style/.style={smooth, samples=100,color=black}}
\theoremstyle{definition}
\newtheorem{dfn}{Definition}[section]
\theoremstyle{plain}
\newtheorem{thm}[dfn]{Theorem}
\newtheorem{lemma}[dfn]{Lemma}
\newtheorem{cor}[dfn]{Corollary}
\newtheorem{fact}[dfn]{Fact}
\newtheorem{claim}[dfn]{Claim}
\DeclareMathOperator*{\E}{{\rm E}}
\DeclareMathOperator*{\R}{\mathbb{R}}
\DeclareMathOperator*{\Z}{\mathbb{Z}}
\DeclareMathOperator*{\N}{\mathbb{N}}
\DeclareMathOperator*{\1}{\mathbbm{1}}
\DeclareMathOperator*{\sgn}{{\rm sgn}}
\def\ls{\lambda_s}
\def\lS{\lambda_S}
\def\Otilde{\tilde O}
\def\poly{{\rm poly}}
\def\taumix{\tau_{\rm mix}}
\def\Tcoup{T_{\rm coup}}
\def\tGln{\widetilde{G}_{d_n}}
\def\Gln{G_{d_n}}
\def\gc{\beta}
\def\vf{\theta}
\def\gcrc{\theta_r}
\def\ts{\Theta_{\rm s}}
\def\tS{\Theta_{\rm S}}
\title{Dynamics for the mean-field random-cluster model}
\author{Antonio Blanca\thanks{Computer Science Division, U.C. Berkeley, Berkeley, CA 94720. Email: {\tt ablanca@cs.berkeley.edu}.  Research supported in part by a U.C. Berkeley Chancellor's Fellowship for Graduate Study, an NSF Graduate Research Fellowship and NSF grants CCF-1016896 and CCF-1420934.}
\and Alistair Sinclair\thanks{Computer Science Division, U.C. Berkeley, Berkeley, CA 94720. Email: {\tt sinclair@cs.berkeley.edu}.  Research supported in part by NSF grants CCF-1016896 and CCF-1420934.}
}
\begin{document}

\date{}

\maketitle

\thispagestyle{empty}

\begin{abstract}
\noindent
The random-cluster model has been widely studied as a unifying framework for random graphs, spin systems and random spanning trees, but its dynamics have so far largely resisted analysis.
In this paper we study a natural
non-local Markov chain known as the Chayes-Machta dynamics for the mean-field case of the
random-cluster model, and identify a critical regime $(\lambda_s,\lambda_S)$ of the model
parameter~$\lambda$ in which the dynamics undergoes an exponential slowdown. Namely, we prove that the mixing time is $\Theta(\log n)$ if $\lambda \not\in [\ls,\lS]$, and $\exp(\Omega(\sqrt{n}))$ when $\lambda \in (\ls,\lS)$. These results hold for all values of the second model parameter~$q > 1$. In addition, we prove that the local heat-bath dynamics undergoes a similar exponential slowdown in $(\ls,\lS)$.

%Thus, we obtain the first analysis of a dynamics for the random-cluster model for values of $q$
%other than the already well understood special case $q=2$ (which corresponds to the Ising model) over almost the full range of values of~$\lambda$. 
% by showing that their mixing time is $O(n^4 \polylog~n)$ if $\lambda \not\in [\ls,\lS]$ and $\exp(\Omega(\sqrt{n}))$ when $\lambda \in (\ls,\lS)$.

%\vfill
%\noindent
%{\bf Note to reviewers:}

%\noindent
%As suggested in the call for papers, we are attaching the full paper as an appendix. In this 10-page abstract, we provide pointers to relevant sections of the appendix for missing proof details.

\end{abstract}

\newpage
\setcounter{page}{1}

\pagebreak
\section{Introduction}

%\par\bigskip\noindent
{\bf Background and previous work.}\ \ Let $H=(V,E)$ be a finite graph.  The {\it random-cluster model\/} on~$H$ with parameters
$p\in(0,1)$ and $q>0$ assigns to each subgraph $(V,A \subseteq E)$ a probability
\begin{equation}\label{eq:rcmeasure}
\mu_{p,q}(A) \propto p^{|A|}(1-p)^{|E|-|A|} q^{c(A)},\nonumber
\end{equation}
where $c(A)$ is the number of connected components in~$(V,A)$. $A$ is a configuration of the model.
% and $Z(H,p,q)$ is a normalizing constant called {\it partition function}. 

%added 10/23/2014

The random-cluster model was introduced in the late 1960s by Fortuin and Kasteleyn~\cite{FK}
as a unifying framework for studying random graphs, spin systems in physics and random spanning trees; see the book~\cite{Grimmett} for extensive background. When $q=1$ this model corresponds to the standard Erd\H{o}s-R\'enyi model on subgraphs of~$H$,
but when $q>1$ (resp., $q<1$) the resulting probability measure favors subgraphs with more (resp., fewer) connected components, and is thus a strict generalization. 

%of course ubiquitous in theoretical computer science. Their relevance in the study of electrical networks, which in part motivated the work by Fortuin and Kasteleyn, was hinted by Kirchooff in work dating back to the 1840s and made rigorous by Benjamini, Lyons, Peres and Schramm in \cite{BLPS}.

For the special case of integer $q \ge 2$ the random-cluster model is, in a precise sense, dual to the classical ferromagnetic {\it $q$-state Potts model}, where configurations are assignments of spin values $\{1,\ldots,q\}$ to the vertices of~$H$; the duality is established via a coupling of the models (see, e.g., \cite{ES}). Consequently, the random-cluster model illuminates much of the physical theory of the Ising/Potts models. Indeed, recent breakthrough work by Beffara and Duminil-Copin \cite{BDC} uses the geometry of the random-cluster model in $\Z^2$ to establish the critical temperature of the $q$-state Potts model, settling a long-standing conjecture.

At the other extreme, when $q, p \rightarrow 0$ and $p$ approaches zero at a slower rate (i.e., $q/p \rightarrow 0$) the random-cluster measure $\mu_{p,q}$ converges to the uniform random spanning tree measure on $H$. Random spanning trees are fundamental probabilistic objects, whose relevance goes back to Kirchhoff's work on electrical networks \cite{Kir}. 
%This connection was one of the inspirations for the introduction of the random-cluster model. 
% which in part inspired the work by Fortuin and Kasteleyn~\cite{FK}.

%Note that the random-cluster model makes perfect sense for all positive real~$q$, and is thus a strict generalization.

%When $H$ is the complete graph and $q=1$ this model 
%corresponds to the standard Erd\"os-R\'enyi model on subgraphs of~$H$,
%but when $q>1$ (resp., $q<1$) it favors subgraphs with more (resp., fewer) connected components.

%The random-cluster model was introduced in the late 1960s by Fortuin and Kasteleyn~\cite{FK}
%as a unifying framework for studying random graphs, spin systems in physics and electrical
%networks; see the book~\cite{Grimmett} for extensive background.  In particular, for integer
%$q\ge 2$ the random-cluster model is, in a precise sense, dual to the classical ferromagnetic {\it $q$-state
%Potts model}, where configurations are assignments $\sigma\in[q]^V$ of spin values
%$\{1,\ldots,q\}$ to the vertices of~$H$. Configuration~$\sigma$ has probability 
%$\propto \exp (\beta a(\sigma))$, where $a(\sigma)$ is the number of edges connecting
%vertices with the same spin values. (When $q=2$ this is just the ferromagnetic Ising model.)  The duality between these models is revealed when $\beta = - \ln (1-p)$ (see, e.g., \cite{ES}). Consequently, the random-cluster model illuminates much of the physical theory of the
%Ising/Potts models.  However, the random-cluster model makes perfect sense for 
%all positive real~$q$, and is thus a strict generalization.

In this paper we investigate the dynamics of the random-cluster model, i.e.,
Markov chains on random-cluster configurations that are reversible w.r.t.\ $\mu_{p,q}$ and thus converge to it. The dynamics
of physical models are of fundamental interest, both as evolutionary processes in
their own right and as Markov chain Monte Carlo (MCMC) algorithms for sampling configurations in equilibrium. In both
these contexts the central object of study is the {\it mixing time}, i.e., the number of
steps until the dynamics is close to the equilibrium measure~$\mu_{p,q}$ starting
from any initial configuration. While 
dynamics for the Ising and Potts models have been widely studied, very
little is known about random-cluster dynamics. The main reason for this appears to be the fact that connectivity is a global property which has led to the failure of existing Markov chains analysis tools.
%In fact, our work is the first to characterize the mixing behavior of any random-cluster dynamics for general (non-integer) values of $q$.

We focus on the {\it mean-field\/} case, where $H$ is the complete graph on $n$ vertices.
In this case the random-cluster model may be viewed as the standard random graph model ${\cal G}_{n,p}$,
enriched by a factor that depends on the component structure. As we shall see, the mean-field
case is already quite non-trivial; moreover, it has historically proven to be a useful starting
point in understanding the dynamics on more general graphs.  The structural properties of the mean-field
model are already well understood~\cite{BGJ,LL}; in particular, it exhibits a phase transition
(analogous to that in ${\cal G}_{n,p}$) corresponding to the appearance of a ``giant" component of linear size. It is natural here to re-parameterize by setting $p=\lambda/n$; the phase transition then
occurs at the critical value $\lambda = \lambda_c(q)$ given by 
\begin{equation}\label{eq:lambdacrit}
\lambda_c(q) = \begin{cases}
q & \text{for $0<q\le 2$};\\
2{\textstyle\left(\frac{q-1}{q-2}\right)\log(q-1)} & \text{for $q>2$}.
\end{cases} \nonumber
\end{equation}
For $\lambda < \lambda_c(q)$ all components are of size $O(\log n)$ w.h.p.\footnote{We say that an event occurs {\it with high probability (w.h.p.)\/} if it occurs with probability approaching 1 as $n \rightarrow \infty$.}, while for
$\lambda>\lambda_c(q)$ there is a unique giant component of size $\theta n$ (for some constant $\theta$ that depends on~$q$ and $\lambda$). The former regime is called the {\it disordered phase}, and the latter is the {\it ordered phase}. Henceforth we assume $q > 1$, since the
$q < 1$ regime is structurally quite different; the dynamics are trivial for $q=1$.

Our main object of study is a non-local dynamics known as the 
{\it Chayes-Machta (CM) dynamics\/}~\cite{CM}.  
Given a random-cluster configuration~$(V,A)$, one step of this dynamics is defined as follows:
\begin{itemize}
	\item[(i)]{\it activate\/} each connected component of~$(V,A)$ independently with probability $1/q$; 
	\item[(ii)] remove all edges connecting active vertices;
	\item[(iii)] add each edge connecting active vertices independently with probability $p,$ leaving the rest of the configuration unchanged.	
\end{itemize} 

\noindent
It is easy to check that this dynamics is reversible w.r.t.~$\mu_{p,q}$ \cite{CM}. Until now, the mixing time of the CM dynamics has not been rigorously established for any non-trivial random-cluster measure $\mu_{p,q}$ on any graph. Our goal in this paper is to analyze the CM dynamics in the mean-field case for all values of $q > 1$ and all values of $\lambda > 0$.

For integer~$q$, the CM dynamics is a close cousin of the well studied and widely used {\it Swendsen-Wang (SW) dynamics\/} \cite{SW}.
The SW dynamics is primarily a dynamics for the Ising/Potts model, but it may alternatively be viewed as a Markov chain for the random-cluster model using the coupling of these measures mentioned earlier. However, the SW dynamics is only well-defined for integer $q$, while the random-cluster model makes perfect sense for all $q>0$. The CM dynamics was introduced precisely in order to allow for this generalization.

The SW dynamics for the mean-field case is fully understood for $q=2$:
recent results of Long, Nachmias, Ning and Peres~\cite{LNNP}, building on earlier work of
Cooper, Dyer, Frieze and Rue~\cite{CDFR}, show that the mixing time is $\Theta(1)$
for $\lambda<\lambda_c$, $\Theta(\log n)$ for $\lambda>\lambda_c$, and $\Theta(n^{1/4})$
for $\lambda=\lambda_c$. Until recently, the picture for integer $q \ge 3$  was much less complete: Huber~\cite{Huber} gave bounds of $O(\log n)$ and
$O(n)$ on the mixing time when $\lambda$ is far below and far above~$\lambda_c$ respectively, while Gore
and Jerrum~\cite{GJ} showed that at the critical value $\lambda=\lambda_c$ the mixing time
is $\exp(\Omega(\sqrt{n}))$. All these results were developed for the Ising/Potts model, so their relevance to the random-cluster model is limited to the case of integer~$q$. In work that appeared after the submission of this manuscript \cite{BSmf}, Galanis, \v{S}tefankovi\v{c} and Vigoda \cite{GSV} provide a more comprehensive analysis of the $q \ge 3$ mean-field case. Finally, for the very different case of the $d$-dimensional torus, Borgs et al. \cite{BCT,BFKTVV} proved exponential lower bounds for the mixing time of the SW dynamics for $\lambda = \lambda_c$ and $q$ sufficiently large. 

%All these results were developed for the Ising/Potts model, so their relevance to the random-cluster model is limited to the case of integer~$q$.

% Notes: 
% 1- Maybe first sentence can be massaged a little to say that we wanted to analyze any dynamics, and we focus in CM.
% 

Our work is the first to provide tight bounds for the mixing time of any random-cluster dynamics for general (non-integer) values of $q$. 

\par\medskip\noindent
{\bf Results.}\ \ To state our results we identify two further critical points,
$\ls(q)$ and $\lS(q)$, with the property that $\ls(q) \le \lambda_c(q) \le \lS(q)$. (For $1 < q \le 2$ these three points coincide; for $q>2$ they are all distinct.) The definitions
of these points are somewhat technical and can be found in Section~\ref{sec:preliminaries}.

Our first result shows that the CM dynamics reaches equilibrium very rapidly for~$\lambda$
outside the ``critical" window $[\ls,\lS]$. Moreover, our bounds are tight throughout the fast mixing regime.
\begin{thm}\label{thm:intro1}
	For any $q > 1$, the mixing time of the mean-field CM dynamics is $\Theta(\log n)$ for $\lambda \not\in [\ls,\lS]$.
\end{thm}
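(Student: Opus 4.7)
The plan is to track a scalar order parameter---the normalized size $X_t$ of the largest component at time $t$---show that it converges in $O(\log n)$ steps to its equilibrium value whenever $\lambda\notin[\ls,\lS]$, and then lift this to convergence of the full configuration via a coupling. To set up the drift I would compute $F(x)=\E[X_{t+1}\mid X_t=x]$ by conditioning on whether the current giant (of size $xn$) is activated. If it is not (probability $1-1/q$) the giant persists and the active vertex set consists of the activated small components, of total size concentrated near $(1-x)n/q$; if it is (probability $1/q$) the giant is dissolved into the active set, which then has size near $[x+(1-x)/q]n$. On top of either active set we overlay a fresh $G(Y,\lambda/n)$ whose own giant has relative size $\rho$ solving the classical survival equation $1-\rho=\exp(-\lambda Y\rho/n)$; the new $X_{t+1}$ is the maximum of the surviving old giant (if any) and this new Erd\H{o}s--R\'enyi giant. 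I would then verify that $\ls$ and $\lS$ are precisely the two saddle-node bifurcation values of $F$: for $\lambda\notin[\ls,\lS]$, $F$ has a unique attracting fixed point $x^\ast$ and is a uniform strict contraction on all of $[0,1]$.

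Given this deterministic drift, the upper bound follows in two further steps. First, using Chernoff bounds for the activated-vertex count and standard concentration for the giant of $G(Y,\lambda/n)$ plus a tail bound on the small-component sizes, one gets $|X_{t+1}-F(X_t)|=\Otilde(n^{-1/2})$ w.h.p.; combined with the contraction, a potential-function argument forces $|X_t-x^\ast|\le n^{-1/4}$ within $O(\log n)$ steps from any starting configuration. Next, for coalescence of two chains with $|X_t-X_t'|\le n^{-1/4}$, I would couple (i) the activation coins by matching components of equal rank across the two chains and (ii) the edge-resampling coins inside the active set. Because at equilibrium the non-giant components are of size $O(\log n)$ and their size distribution is tightly concentrated, the number of mismatches created in one step should be small w.h.p., and iterating the coupling for another $O(\log n)$ steps---exploiting the fact that each activation fully resamples a component---drives the chains together.

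The hard part is this coupling step: matching the macroscopic parameter $X_t$ does not by itself align the microscopic component structures, so one needs quantitative control on the joint law of the small component sizes and on how mismatches propagate through activation and resampling. I would expect the argument to split along the phase: in the ordered regime $\lambda>\lS$ the giant typically survives for $\Theta(q)$ steps between reactivations, which calls for a coupling of the ``non-giant remainder'' viewed as a $G((1-x)n,\lambda/n)$-like object; in the disordered regime $\lambda<\ls$ all components are logarithmic and a reveal-and-couple argument in the style of standard $G(n,p)$ analyses should suffice. A small additional point is that the burn-in of Step~1 must reach the correct basin of attraction from any initial configuration, but this is automatic from the global contraction of $F$ on $[0,1]$ that holds outside the critical window.

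The matching $\Omega(\log n)$ lower bound follows from a Wilson-style distinguishing-statistic argument applied to $X_t$. Since the conditional variance of $X_{t+1}$ given $X_t$ is $O(1/n)$ (bounded-difference estimates for the activation pattern and for the fresh $G(Y,\lambda/n)$) while the expected per-step contraction toward $x^\ast$ is by a factor bounded away from $1$, starting from a configuration with $|X_0-x^\ast|=\Theta(1)$---e.g., the empty configuration when $\lambda>\lS$, or a single clique when $\lambda<\ls$---forces $\Omega(\log n)$ steps before $X_t$ can concentrate near $x^\ast$, and hence before the total variation distance to equilibrium can drop below a small absolute constant.
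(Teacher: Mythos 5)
Your high-level picture of the upper bound (scalar drift for the largest component, $O(\log n)$ burn-in, then a coupling) matches the paper's strategy, but the step you yourself flag as ``the hard part'' is exactly where the paper's real technical content lives, and your proposal does not supply it. Bringing $|X_t-X_t'|$ down to $n^{-1/4}$ and matching components of equal rank does not make the two chains coalesce: after the activation sub-step the two copies will activate numbers of vertices differing by order $\sqrt{n}$ (fluctuations from the unmatched small components and from the giants), and percolating on active sets of different sizes produces different component structures, so the mismatch never disappears under a purely bijective edge coupling. The paper resolves this with two ingredients you are missing: (1) a reservoir of $\Omega(n)$ isolated vertices (Lemma \ref{rg-isolated}) whose activation counts are binomial, coupled via a non-trivial binomial coupling (Lemma \ref{bc-str}) so that the discrepancy in active-set sizes is corrected exactly with probability $\Omega(1)$ (and with probability $1-O(1/\log n)$ once the discrepancy is $O(\sqrt n/\log n)$, which is what lets the coupling survive $O(\log n)$ consecutive steps); and (2) a further phase turning ``same component structure'' into ``same configuration'' (Lemma \ref{coupling-ec}), which is also needed since equal component sizes do not determine the edge set. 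A secondary gap: your claim that the burn-in from an arbitrary start is ``automatic from global contraction of $F$'' is not justified, because $F$ is derived assuming a unique macroscopic component; an arbitrary configuration can have many large components, and the paper needs a separate argument (Lemma \ref{unique-gc}) showing their number decays geometrically. Also, in the subcritical regime $F$ is not a contraction to a fixed point: the drift is an additive $-\delta/q$ per step, $\phi$ is undefined below $\theta_{\rm min}$, and the near-critical percolation window around $\theta_{\rm min}$ requires the monotonicity/hitting-time treatment of Section \ref{upper-sub}.

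The lower bound is a more serious gap. Your Wilson-style argument rests on the claim that ${\rm Var}(X_{t+1}\mid X_t=x)=O(1/n)$, which is false away from the fixed point: with probability $1/q$ the giant is activated and $X_{t+1}\approx\phi(x)$, with probability $1-1/q$ it is not and $X_{t+1}\approx x$, so the conditional variance is of order $(x-\phi(x))^2=\Theta(1)$ at the very starting states you propose. Worse, for $\lambda<\ls$ the statistic $X_t$ simply cannot give an $\Omega(\log n)$ bound: the drift is a constant additive decrease, so from a single giant (your ``clique'' start) the largest component collapses to $O(\log n)$ in $O(1)$ steps, after which $X_t$ is indistinguishable from equilibrium at this scale. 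The paper's Case (i) uses a different obstruction entirely: start from a configuration in which every component has size $\Theta(\log^2 n)$; each component is left untouched with probability $(1-1/q)^t$, so after $\frac12\log_{q/(q-1)}n$ steps some such component survives w.h.p., and components of size $\Theta(\log^2 n)$ are atypical under $\mu_{p,q}$ (Lemma \ref{rc-sub}). For $\lambda>\lS$ your ``shrinkage by at most a constant factor per step'' intuition is the right one, but it has to be run as a trajectory/induction argument using $\phi'>\frac{q-1}{q}$ (Fact \ref{slow-drift}) together with the $O(\sqrt n)$ conditional deviation bound, as in Case (iv) of Theorem \ref{lb}, rather than as an off-the-shelf Wilson bound, precisely because the one-step variance is not uniformly small.
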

\noindent
Our next result shows that, {\it inside} the critical window $(\ls, \lS)$, the mixing time is
dramatically larger. 
(We state this result only for $q>2$ as otherwise the window is empty.)
% replacing exp with e^ saves a line, but then we would have to be consistent
\begin{thm}\label{thm:intro2}
	For any $q>2$, the mixing time of the mean-field CM dynamics is $e^{\Omega(\sqrt{n})}$
	for $\lambda\in (\ls,\lS)$.
\end{thm}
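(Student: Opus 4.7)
My plan is to prove Theorem~\ref{thm:intro2} by a bottleneck/conductance argument that exploits the bimodality of $\mu_{p,q}$ in the critical window, combined with a drift analysis showing that the CM dynamics cannot cross the free-energy barrier in a single step except with exponentially small probability. The target inequality, via the standard bottleneck lemma applied to some set $S$, will take the form
\[
t_{\rm mix}\ \ge\ \frac{\mu_{p,q}(S)\,\mu_{p,q}(S^c)}{4\sum_{\sigma\in S,\,\tau\notin S}\mu_{p,q}(\sigma)P(\sigma,\tau)},
\]
so the two things I need are $\mu_{p,q}(S),\mu_{p,q}(S^c)=\Omega(1)$ and a denominator of size $e^{-\Omega(\sqrt n)}$.

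First, I would establish the bimodality of $\mu_{p,q}$. Using the definitions of $\ls$ and $\lS$ from Section~\ref{sec:preliminaries} (I expect these to mark the endpoints of the $\lambda$-range in which the mean-field drift map for the CM dynamics has two stable fixed points $\theta_d<\theta_o$ separated by an unstable point $\theta_u$, equivalently the range in which the associated free-energy functional on the fractional giant-component size has two local minima separated by a local maximum), I would show that for $\lambda\in(\ls,\lS)$ there is a constant $c>0$ with
\[
\mu_{p,q}\!\left(\left\{\sigma:|C_1(\sigma)|\le \tfrac{\theta_d+\theta_u}{2}\,n\right\}\right)\ \ge\ c,\qquad
\mu_{p,q}\!\left(\left\{\sigma:|C_1(\sigma)|\ge \tfrac{\theta_u+\theta_o}{2}\,n\right\}\right)\ \ge\ c.
\]
In parallel, a saddle-point analysis of $\mu_{p,q}$ written as a sum over component-size sequences weighted by the $q^{c(A)}$ factor yields a quantitative bound on the barrier height: configurations with $|C_1(\sigma)|$ in an $o(n)$ window around $\theta_u n$ have total $\mu_{p,q}$-mass at most $e^{-\Omega(\sqrt n)}$.

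Second, take $S=\{\sigma:|C_1(\sigma)|\le\theta^\ast n\}$ for some $\theta^\ast\in(\theta_d,\theta_u)$ chosen close enough to $\theta_u$ that the bimodality gives $\mu_{p,q}(S),\mu_{p,q}(S^c)=\Omega(1)$, but far enough below $\theta_u$ that the deterministic drift map $F$ satisfies $F(x)\le\theta^\ast-\delta$ uniformly on $[0,\theta^\ast]$ for a constant $\delta>0$. The main technical step is then a single-step concentration estimate for the CM dynamics: starting from any $\sigma\in S$ with $|C_1(\sigma)|=xn\le\theta^\ast n$, the size of the largest component after one CM step is tightly concentrated around $F(x)n$, and in particular $P(\sigma,S^c)\le e^{-\Omega(\sqrt n)}$. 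The two sources of randomness are (i)~the independent Bernoulli$(1/q)$ activation of components, controlled by standard Chernoff bounds, and (ii)~the formation of a giant in the Erd\H{o}s--R\'enyi graph ${\cal G}(m,p)$ on the $m\approx n/q$ active vertices, controlled by known tail bounds for the giant of supercritical random graphs. Plugging these estimates into the bottleneck inequality gives the claimed lower bound.

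The main obstacle I anticipate is making the single-step concentration uniform over $\sigma\in S$, rather than only for ``typical'' $\sigma$. Configurations in $S$ with an atypical component profile, for example many components of size just below $\theta^\ast n$ or an unusually large number of medium-sized components, could a priori produce a giant after one CM step even though they currently lie in $S$, because the active-vertex set they generate need not resemble the one from a typical disordered configuration. Either I must show via a refined saddle-point estimate that such atypical $\sigma$ carry $\mu_{p,q}$-mass at most $e^{-\Omega(\sqrt n)}$ and hence contribute negligibly to the conductance, or I must upgrade the drift analysis to depend on the full component-size profile rather than only on $|C_1(\sigma)|$. Both routes hinge on a careful description of both the random-cluster measure on $S$ and the random graph produced by one CM step, and this is where I expect the bulk of the technical work to lie.
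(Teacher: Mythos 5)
Your conductance plan contains a genuine gap at the very first step: the bimodality claim with both masses $\Omega(1)$ is false throughout the open window $(\ls,\lS)$ except at the single point $\lambda=\lambda_c$. By Lemma~\ref{rc-sub}, for $\lambda<\lambda_c$ the stationary measure concentrates on configurations with $L_1=O(\log n)$, while for $\lambda>\lambda_c$ it concentrates near $L_1\approx\gcrc n$; in either case the non-dominant phase carries mass $o(1)$ (in fact $e^{-\Omega(\sqrt n)}$), so you will never get $\mu_{p,q}(S),\mu_{p,q}(S^c)=\Omega(1)$ simultaneously except at $\lambda_c$. The bottleneck inequality in the form you wrote it therefore cannot deliver the result on most of the interval. (A conductance argument could in principle be rescued by working with the ratio $Q(S,S^c)/\mu(S)$ rather than the product $\mu(S)\mu(S^c)$, so that the exponentially small $\mu(S)$ in the numerator and denominator cancel, but that is not what you wrote and it requires a nontrivial lower bound on $\mu(S)$ to keep the ratio well-defined.)

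The paper avoids conductance entirely and instead uses the more elementary ``escape from a metastable set'' argument: pick a starting set $S$ that is closed under one CM step with probability $1-e^{-\Omega(\sqrt n)}$ uniformly over $\sigma\in S$, iterate to conclude $\Pr[X_t\in S\mid X_0\in S]\ge 3/4$ for $t=e^{\Omega(\sqrt n)}$, and observe that since $\mu_{p,q}(S)=o(1)$ (Lemma~\ref{rc-sub}, Corollary~\ref{cor:rc-sub}) the total variation distance to stationarity remains at least $1/2$ after $t$ steps. This sidesteps the need to estimate $\mu_{p,q}(S)$ from below. Your concern about atypical profiles in $S$ is legitimate and is precisely why the paper does not take $S=\{\sigma:|C_1(\sigma)|\le\theta^\ast n\}$: instead $S$ is defined with explicit bounds on the full component-size profile (all components $O(\sqrt n)$ in the regime $\lambda_c\le\lambda<q$; a unique giant above $(\vf^\ast+\varepsilon)n$ with $L_2=O(\sqrt n)$ in the regime $\ls<\lambda<\lambda_c$). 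With these constraints, Hoeffding's inequality gives $e^{-\Omega(\sqrt n)}$ concentration of the number of active vertices uniformly over $S$, and Lemma~\ref{sub-strong} and Lemma~\ref{sup-gc-ld} give the tail bounds on the percolation step, so the uniform single-step estimate you correctly identified as the crux goes through. Your drift-concentration intuition is the right one; the fix is to abandon the $\Omega(1)$ bimodality requirement, restrict $S$ so the uniform one-step estimate is provable, and use the direct metastability argument rather than conductance.
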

\noindent
We now provide an interpretation of the above results. When $q>2$ the mean-field random-cluster model exhibits a
{\it first-order\/} phase transition, which means that at criticality ($\lambda = \lambda_c$) the ordered and disordered phases
mentioned earlier {\it coexist}~\cite{LL}, i.e., each contributes about half of the probability mass. (For $q\le 2$, there is no phase coexistence.) Phase coexistence suggests exponentially slow mixing for most natural dynamics, because
of the difficulty of moving between the phases. Moreover, by continuity we should expect that, within a constant-width interval around~$\lambda_c$, the
effect of the non-dominant phase (ordered below $\lambda_c$, disordered above~$\lambda_c$) will still be felt, as it will form a
second mode (local maximum) for the random-cluster measure. This leads to so-called {\it metastable\/} states near that
local maximum from which it is very hard to escape, so slow mixing should persist throughout this interval. Intuitively, the values $\lambda_s,\lambda_S$
mark the points at which the local maxima disappear. A similar phenomenon was
captured in the case of the Potts model by Cuff {\it et al.}~\cite{CDLLPS}. Our results make the above picture for the dynamics rigorous for the random-cluster
model for all $q>2$; notably, in contrast to the Potts model, in the random-cluster model metastability affects the mixing time on {\it both\/} sides of~$\lambda_c$.  
Note that our results leave open the behavior of the mixing time exactly at $\ls$ and $\lS$.

%We now provide some intuition about the critical window $(\ls,\lS)$. When $q > 2$ the random-cluster exhibits a {\it first-order} phase transition, which means that at criticality ($\lambda = \lambda_c$) the ordered and disordered phases {\it coexist}. Phase coexistence suggests slow mixing for a large class of dynamics; namely, all those that require visiting exponentially unlikely configurations to get from one phase to the other. The effects of phase coexistence extends to a window around $\lambda_c$, where we are able to identify {\it metastable} states from which the dynamics have difficulty escaping.  For $q \le 2$ there is no phase coexistence, and thus $\ls = \lambda_c = \lS$. Note that our bounds leave open the behavior of the mixing time exactly at $\ls$ and $\lS$.

%Note that the critical point $\lambda_c$ is apparently inconsequential for the CM dynamics; rather, the important values are $\ls$ and~$\lS$. Our bounds leave open the behavior of the mixing time exactly at these two critical values.
%\noindent
%This result is consistent with the Gore-Jerrum result mentioned earlier, which shows a similar
%exponential lower bound for the SW dynamics for integer $q\ge 3$, though only exactly at
%the critical point $\lambda=\lambda_c(q)$.  Note that the critical point $\lambda_c$ is
%apparently inconsequential for the CM dynamics; rather, the important values are $\ls$ and~$\lS$.
%Our bounds leave open the behavior of the mixing time exactly at these two critical values.

As a byproduct of our main results above, we deduce new bounds on the mixing time of {\it local\/}
dynamics for the random-cluster model (i.e., dynamics that modify only a constant-size
region of the configuration at each step). For definiteness we consider the canonical
{\it heat-bath (HB) dynamics\/}, which in each step updates a single edge of the current
configuration $(V,A)$ as follows:
\begin{itemize}	
	\item[(i)] pick an edge $e\in E$ u.a.r;
	\item[(ii)] replace $A$ by $A\cup \{e\}$ with probability $\frac{\mu_{p,q}(A\cup\{e\})}{\mu_{p,q}(A\cup\{e\})+\mu_{p,q}(A\setminus\{e\})}$, else by $A \setminus \{e\}$.
\end{itemize}
%(Note that $e$ is a cut edge in $(V,A\cup \{e\})$ iff changing the current configuration
%of~$e$ changes the number of connected components.)

\noindent
Local dynamics for the random-cluster model are currently very poorly understood (but see \cite{GeS} for the special case of graphs with bounded tree-width). However, in a recent surprising development, Ullrich~\cite{Ullrich1,Ullrich2} showed
that the mixing time of the heat-bath dynamics
on any graph differs from that of the SW dynamics by at most a $\poly(n)$ factor.
Thus the previously known bounds for SW translate to bounds for the heat-bath
dynamics for integer $q$.  By adapting Ullrich's technology to our CM setting, we are able to
obtain a similar translation of our results, thus establishing the first non-trivial bounds on
the mixing time of the mean-field heat-bath dynamics for all $q > 1$.
\begin{thm}\label{thm:intro3}
	For any $q > 1$, the mixing time of the heat-bath dynamics for the mean-field
	random-cluster model is $\Otilde(n^4)$ for $\lambda\notin [\ls,\lS]$, and
	$e^{\Omega(\sqrt{n})}$ for $\lambda\in (\ls,\lS)$.
\end{thm}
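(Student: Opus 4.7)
\textbf{Proof plan for Theorem \ref{thm:intro3}.} My plan is to deduce Theorem \ref{thm:intro3} from Theorems \ref{thm:intro1} and \ref{thm:intro2} by establishing a polynomial-factor comparison between the spectral gaps of the CM and heat-bath (HB) dynamics, in the spirit of Ullrich's comparison between SW and HB \cite{Ullrich1,Ullrich2}. The statement then follows by combining the comparison with Theorem~\ref{thm:intro1} for the upper bound, and with the bottleneck underlying Theorem~\ref{thm:intro2} for the lower bound.

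For the upper bound $\Otilde(n^4)$ when $\lambda \notin [\ls,\lS]$, the target inequality is of the form $\text{gap}(\text{HB}) \ge \text{gap}(\text{CM})/\poly(n)$, proved via the canonical-paths / Dirichlet-form comparison method. Given any CM transition $A \to A'$ with positive transition probability, I would encode it as a sequence of single-edge heat-bath updates: first delete, one by one, the edges of $A$ lying inside the active vertex set, then insert, one by one, the edges of $A'$ within the active vertex set. Each such path has length $O(n^2)$, and the crucial estimate is a uniform bound on the congestion of any heat-bath edge across the resulting collection of canonical paths, exploiting the factorization of the $\mu_{p,q}$-conditional distribution within the active subgraph. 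Combining the resulting $\poly(n)$-factor comparison with $T_{\rm mix}(\text{CM}) = O(\log n)$ from Theorem~\ref{thm:intro1} then yields the claimed $\Otilde(n^4)$ mixing time.

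For the exponential lower bound when $\lambda \in (\ls,\lS)$, I would reuse the bottleneck that presumably underlies the proof of Theorem~\ref{thm:intro2}, rather than invert the comparison. Because $\mu_{p,q}$ is bimodal inside the critical window, there should be a set $S$ of configurations separating the two metastable phases whose boundary has $\mu_{p,q}$-probability at most $e^{-\Omega(\sqrt n)}$, typically cut along a level set of the size of the largest component. Since a single heat-bath step alters at most one edge and hence changes the size of the giant component by at most $O(1)$, the conductance across $\partial S$ is also at most $e^{-\Omega(\sqrt n)}$, and the standard conductance lower bound on mixing time yields $T_{\rm mix}(\text{HB}) = e^{\Omega(\sqrt n)}$. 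Thus this direction requires no new comparison theorem at all.

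The main obstacle, and the bulk of the work, is the adaptation of Ullrich's comparison from SW to CM. Ullrich's argument exploits the Edwards--Sokal joint distribution of Potts spins $\sigma$ and random-cluster configurations $A$, in which one SW step corresponds to resampling $A$ conditional on $\sigma$; the product structure there enables a clean Dirichlet-form estimate. For CM with arbitrary (non-integer) $q > 1$ this spin representation is unavailable, so the component-activation step must be handled intrinsically, by keeping track of which configurations are compatible with a given activation pattern. Controlling the polynomial degree carefully is what ultimately yields the exponent $4$ in the theorem statement; any looseness in the congestion bookkeeping would inflate it.
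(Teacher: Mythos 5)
Your high-level plan (deduce HB bounds from CM bounds via a polynomial spectral-gap comparison in the spirit of Ullrich) is the same as the paper's, and you correctly identify the central obstruction: for non-integer $q$ there are no Potts spins, so the activation step must be treated intrinsically. But your proposed technique for carrying out the comparison is genuinely different from the paper's, and it contains gaps.

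For the upper bound, you propose a canonical-paths/Dirichlet-form comparison. The paper does \emph{not} use canonical paths. Instead it shows (Lemma~\ref{lem:representation}) that $P_{\rm CM}$ factorizes as $M\bigl(\prod_{e\in E} T_e\bigr)M^*$, where $M$ is a stochastic matrix that assigns a random binary active/inactive labeling $\sigma\in\{0,1\}^V$ (playing the role of Potts spins in Edwards--Sokal, but valid for all $q>1$), each $T_e$ resamples a single edge $e$ conditional on both endpoints being active, and $M^*$ is the adjoint that drops the labeling. It then proves an abstract comparison (Theorem~\ref{thm:ullrich-general}) between $\prod_i T_i$ and $\frac{1}{m}\sum_i T_i$, valid whenever the $T_i$ are idempotent commuting operators; the resulting single-update chain $P_{\rm SU}$ has spectral gap within a constant factor of $P_{\rm HB}$ (Claim~\ref{nne}), giving the full comparison $\alpha\lambda(P_{\rm HB})\le\lambda(P_{\rm CM})\le 8|E|\log|E|\cdot\lambda(P_{\rm HB})$. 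Your canonical-paths plan faces a real difficulty that you acknowledge but do not resolve: a single CM step rerandomizes $\Theta(n^2)$ edges, and the intermediate configurations on a ``delete-then-insert'' path can have $\Theta(n)$ more connected components than either endpoint, so their weight differs from $\mu(A)$ by a factor $q^{\Theta(n)}$ (modulated by edge factors); simultaneously, exponentially many CM transitions route through any given HB edge. Controlling the resulting congestion to $\poly(n)$ is exactly what makes this route hard, and it is the point Ullrich's spectral argument circumvents entirely.

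For the lower bound, you propose a fresh bottleneck/conductance argument for HB. The paper instead gets this direction for free from the same spectral comparison: the inequality $\alpha\lambda(P_{\rm HB})\le\lambda(P_{\rm CM})$, together with $\lambda(P_{\rm CM})=e^{-\Omega(\sqrt n)}$ (which follows from Theorem~\ref{thm:intro2} via (\ref{gap-mix})), immediately forces $\lambda(P_{\rm HB})=e^{-\Omega(\sqrt n)}$ and hence $\taumix(P_{\rm HB})=e^{\Omega(\sqrt n)}$. Two caveats about your direct route: (a) the paper's slow-mixing proof for CM (Theorem~\ref{lb}) is an escape-time argument tailored to the concentration of CM's nonlocal moves, not a bound on the $\mu_{p,q}$-mass of a boundary set, so the cut set you need is not already supplied; and (b) the claim that a single HB step ``changes the size of the giant component by at most $O(1)$'' is false in general---adding one edge can merge two $\Theta(n)$-sized components, and deleting a bridge can split a giant component. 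In typical configurations bridges are rare, but that is an additional fact one would have to establish. So while a direct conductance bound is plausible, it is not the route the paper takes, and it would require nontrivial extra work that your proposal does not provide.
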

\noindent
The $\Otilde$ here hides polylogarithmic factors. We conjecture that the upper bound should be $\Otilde(n^2)$ for all
$\lambda\notin [\ls,\lS]$; the additional $n^2$ factor is inherent in Ullrich's spectral approach.

% new 09/30/2014
%Finally, Theorem \ref{thm:intro3} immediately implies new mixing time bounds for the SW dynamics.

%\begin{cor} \label{cor:intro-SW-bounds} For any integer $q \ge 3$, the mixing time of the  mean-field SW dynamics is $e^{\Omega(\sqrt{n})}$ for $\lambda\in (\ls,\lS)$, and $\Otilde(n^4)$ for $\lambda\notin [\ls,\lS]$.
%\end{cor}
%\noindent
%This result applies to a larger range of $\lambda$ than the lower bounds in \cite{GJ} and the upper bounds in \cite{Huber}. 

%In particular, Corollary \ref{cor:intro-SW-bounds} provides 
%the first polynomial upper bounds for the mixing time of the SW dynamics for 
%$\lambda \in [1/3,\ls)$ and $\lambda \in (\lS,2q\log (3qn)]$.

%end of new 09/30/2014
%\pagebreak
We conclude this introduction with some brief remarks about our techniques. Both
our upper and lower bounds on the mixing time of the CM dynamics focus on
the evolution of the one-dimensional random process given by the size of the largest component (which approaches $\theta n$
for $\lambda>\lambda_c$ and $\Theta(\log n)$ for $\lambda<\lambda_c$).
A key ingredient in our analysis is a function that describes the expected change,
or ``drift", of this random process at each step; the critical points $\ls$ and~$\lS$
discussed above arise naturally from consideration of the zeros of this drift function.

For our upper bounds, we construct a multiple-phase coupling of the evolution
of two arbitrary configurations, showing that they converge in $O(\log n)$
steps; this coupling is similar in flavor to that used by Long {\it et al.}~\cite{LNNP}
for the SW dynamics for $q=2$, but there are significant additional complexities in that
our analysis has to identify the ``slow mixing" window $(\ls,\lS)$ for $q>2$, and 
also has to contend with the fact that only a subset of the vertices 
(rather than the whole graph, as in SW) are active at each step. This latter issue is handled using precise concentration bounds for the number of active vertices, tailored estimates for the component structure of random graphs and a new coupling for pairs of binomial random variables.

For our exponential lower bounds we use the drift function to identify the metastable states mentioned ealier from which the dynamics cannot easily escape.
%directly, showing that the size of the largest component drifts away from its target value.  
For both upper and lower bounds,
we have to handle the sub-critical and super-critical cases, $\lambda<\lambda_c$
and $\lambda>\lambda_c$, separately, even though our final results are insensitive to~$\lambda_c$, because the structure of typical configurations differs in the two cases. 

%\pagebreak
\section{Preliminaries}\label{sec:preliminaries}

In this section we gather a number of standard definitions and background results that we will refer to repeatedly in our proofs.

\subsection{Concentration bounds}

\begin{thm}[Chernoff Bounds] Let $X_1,...,X_k$ be independent Bernoulli random variables. Let $X = \sum_i X_i $ and $\mu = \E[X]$; then for any $\delta \in (0,1)$,
	\[\Pr[|X - \mu| > \delta \mu] \le 2\exp\left(-\frac{\delta^2\mu}{4}\right).\]
\end{thm}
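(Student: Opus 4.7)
The plan is to prove both the upper and lower tail bounds separately by the standard exponential moment (Bernstein/Chernoff) method, then combine them and absorb constants to obtain the stated symmetric form with denominator $4$.

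First I would handle the upper tail $\Pr[X-\mu>\delta\mu]$. For any $t>0$, Markov's inequality applied to $e^{tX}$ gives
\[
\Pr[X>(1+\delta)\mu]\le e^{-t(1+\delta)\mu}\,\E[e^{tX}].
\]
Writing $p_i=\E[X_i]$ and using independence, $\E[e^{tX}]=\prod_i(1+p_i(e^t-1))\le \prod_i \exp(p_i(e^t-1))=\exp(\mu(e^t-1))$. Choosing $t=\ln(1+\delta)$ yields the classical bound
\[
\Pr[X>(1+\delta)\mu]\le\left(\frac{e^{\delta}}{(1+\delta)^{1+\delta}}\right)^{\mu}.
\]
A short Taylor-series argument, using $(1+\delta)\ln(1+\delta)\ge \delta+\delta^2/3$ for $\delta\in(0,1)$ (which is easily verified by comparing derivatives), then gives $\Pr[X>(1+\delta)\mu]\le\exp(-\delta^2\mu/3)$.

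For the lower tail I would repeat the calculation with $t<0$, i.e.\ apply Markov to $e^{-tX}$, obtaining
\[
\Pr[X<(1-\delta)\mu]\le\left(\frac{e^{-\delta}}{(1-\delta)^{1-\delta}}\right)^{\mu}\le\exp(-\delta^2\mu/2),
\]
where the last inequality follows from $(1-\delta)\ln(1-\delta)\ge -\delta+\delta^2/2$ on $(0,1)$. Adding the two tail bounds gives $\Pr[|X-\mu|>\delta\mu]\le\exp(-\delta^2\mu/3)+\exp(-\delta^2\mu/2)\le 2\exp(-\delta^2\mu/3)$, which is stronger than the stated inequality; loosening the constant from $3$ to $4$ yields precisely the form in the theorem.

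There is no real obstacle here: this is a textbook inequality, and the only mildly delicate step is the Taylor comparison $(1\pm\delta)\ln(1\pm\delta)\gtrless \pm\delta+\delta^2/c$, which is standard and holds uniformly on $\delta\in(0,1)$. The slightly suboptimal constant $4$ in the statement is chosen so that a single clean expression covers both tails simultaneously.
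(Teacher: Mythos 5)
Your proof is correct. The paper states this Chernoff bound as a standard background fact (Section 2.1) without giving any proof, so there is nothing to compare against; your exponential-moment derivation --- bounding the MGF by $\exp(\mu(e^t-1))$, choosing $t=\ln(1+\delta)$ and $t=-\ln(1-\delta)$ for the two tails, verifying $(1+\delta)\ln(1+\delta)\ge\delta+\delta^2/3$ and $(1-\delta)\ln(1-\delta)\ge-\delta+\delta^2/2$ on $(0,1)$, and then loosening the combined bound $2e^{-\delta^2\mu/3}$ to $2e^{-\delta^2\mu/4}$ --- is the standard textbook argument and every step checks out.
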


\begin{thm}[Hoeffding's Inequality] Let $X_1,...,X_k$ be independent random variables such that $\Pr[X_i \in [a_i,b_i]] = 1$. Let $X = \sum_i X_i $ and $\mu = \E[X]$; then for any $\delta > 0$,
	\[\Pr[|X - \mu| > \delta] \le 2\exp\left(-\frac{2\delta^2}{\sum_{i=1}^k (b_i-a_i)^2}\right).\]
\end{thm}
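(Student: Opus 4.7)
The plan is to use the standard exponential-moment (Chernoff) method. For a parameter $s>0$ to be optimized later, Markov's inequality applied to $e^{s(X-\mu)}$ gives
\[
\Pr[X-\mu \ge \delta] \;\le\; e^{-s\delta}\,\E\bigl[e^{s(X-\mu)}\bigr].
\]
Independence then lets me factor $\E[e^{s(X-\mu)}] = \prod_{i=1}^k \E[e^{s(X_i-\E X_i)}]$, so the problem reduces to bounding the moment generating function of a single bounded mean-zero variable.

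The main technical step is Hoeffding's lemma: for any random variable $Y$ with $\E[Y]=0$ and $Y\in[a,b]$ almost surely,
\[
\E\bigl[e^{sY}\bigr] \;\le\; \exp\!\Bigl(\tfrac{s^2(b-a)^2}{8}\Bigr).
\]
I would prove this by using convexity of the exponential to write $e^{sy} \le \frac{b-y}{b-a}e^{sa} + \frac{y-a}{b-a}e^{sb}$ for $y\in[a,b]$, taking expectations (the linear term vanishes since $\E[Y]=0$), and then showing that the resulting function $\varphi(u) := -pu + \log(1-p+pe^u)$, where $p=-a/(b-a)$ and $u=s(b-a)$, satisfies $\varphi(u) \le u^2/8$. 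This last estimate is a short calculus exercise: $\varphi(0)=\varphi'(0)=0$, and $\varphi''(u) \le 1/4$ follows from the AM-GM-type inequality $t(1-t)\le 1/4$ applied to $t = pe^u/(1-p+pe^u)$, so Taylor's theorem finishes it.

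Assembling these ingredients yields
\[
\Pr[X-\mu \ge \delta] \;\le\; \exp\!\Bigl(-s\delta + \tfrac{s^2}{8}\sum_{i=1}^k (b_i-a_i)^2\Bigr),
\]
and the right-hand side is minimized at $s = 4\delta/\sum_i (b_i-a_i)^2$, giving the one-sided bound $\exp\bigl(-2\delta^2/\sum_i(b_i-a_i)^2\bigr)$. Applying the identical argument to the variables $-X_i \in [-b_i,-a_i]$ (whose ranges have the same widths) handles the lower tail $\Pr[X-\mu \le -\delta]$, and a union bound over the two tails produces the factor of $2$ in the statement.

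I do not anticipate a substantive obstacle here; this is a textbook result and every step above is routine. The only mildly delicate point is the bound $\varphi''(u)\le 1/4$ inside Hoeffding's lemma, which is what forces the constant $8$ in the exponent (and hence the $2$ after optimization in $s$); getting the constants correct requires a little care but no new ideas.
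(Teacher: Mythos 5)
Your proof is correct: the exponential-moment argument, Hoeffding's lemma via convexity and the bound $\varphi''(u)\le 1/4$, the optimization $s=4\delta/\sum_i(b_i-a_i)^2$, and the union bound over the two tails all check out, and this is the standard textbook derivation. The paper states Hoeffding's inequality only as background in its preliminaries and gives no proof of its own, so there is nothing to compare against beyond noting that your argument is the canonical one.
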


%\pagebreak
\subsection{Mixing time}
%\par\medskip\noindent
%{\bf Mixing time.}\ \
Let $P$ be the transition matrix of a finite, ergodic Markov chain $M$ with state space $\Omega$ and stationary distribution $\pi$. The mixing time of $M$ is defined by
\[\taumix = \max\limits_{z \in \Omega}\min\limits_t \left\{ ||P^t(z,\cdot)-\pi(\cdot)||_{\textsc{\tiny TV}} \le 1/4 \right\}\]
where $||\mu-\nu||_{\textsc{\tiny TV}} = \max_{A \subset \Omega} |\mu(A)-\nu(A)|$ is the total variation distance between the distributions $\mu$ and $\nu$.

A {\it (one step) coupling} of the Markov chain $M$ specifies for every pair of states $(X_t, Y_t) \in \Omega^2$ a probability distribution over $(X_{t+1}, Y_{t+1})$ such that the processes $\{X_t\}$ and $\{Y_t\}$, viewed in isolation, are faithful copies of $M$, and if $X_t=Y_t$ then $X_{t+1}=Y_{t+1}.$ The {\it coupling time} 
is defined by
\[\Tcoup = \max_{x,y \in\Omega}\min\limits_t \{X_t=Y_t|X_0=x,Y_0=y\}.\]
For any $\delta \in (0,1),$ the following standard inequality (see, e.g., \cite{LPW}) provides a bound on the mixing time:
\begin{eqnarray}
\label{mt}
\taumix \le \min\limits_t \left\{\Pr[\Tcoup > t] \le 1/4\right\} \le O\left(\delta^{-1}\right) \cdot \min\limits_t \left\{\Pr[\Tcoup > t] \le 1-\delta\right\}. 
\end{eqnarray}

%\pagebreak
\subsection{Random graphs}\label{subsection:random-graphs}

Let $G_d$ be distributed as a $G(n,p=d/n)$ random graph where $d > 0$. We say that $d$ is bounded away from 1 if there exists a constant $\xi$ such that $|d-1|\ge\xi$. Let ${\cal L}(G_d)$ denote the largest component of $G_d$ and let $L_i(G_d)$ denote the {\it size} of the $i$-th largest component of $G_d$. (Thus, $L_1(G_d) = |{\cal L}(G_d)|$.) In our proofs we will use several facts about the random variables $L_i(G_d)$, which we gather here for convenience. We provide proofs for those results that are not available in the random graph literature.

\begin{lemma}[\cite{LNNP}, Lemma 5.7]\label{rg-isolated} Let $I(G_d)$ denote the number of isolated vertices in $G_d$. If $d = O(1)$, then there exists a constant $C > 0$ such that $\Pr[I(G_d) > Cn] = 1 - O\left(n^{-1}\right)$.
\end{lemma}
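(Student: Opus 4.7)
The plan is to prove concentration of $I(G_d)$ via a standard second moment argument, exploiting the fact that isolation indicators for two distinct vertices are only weakly correlated (they share exactly one edge).

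First I would write $I = \sum_{v \in V} X_v$, where $X_v$ is the indicator of the event that $v$ is isolated. Since each non-edge must be absent independently, $\E[X_v] = (1-d/n)^{n-1}$. Because $d = O(1)$, this quantity converges to $e^{-d}$ and in particular is bounded below by some constant $c_0 > 0$ for all sufficiently large $n$. Consequently $\E[I] \ge c_0 n$.

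Next I would bound the variance. For two distinct vertices $u,v$, the event that both are isolated requires $2n-3$ specified non-edges (all edges incident to $u$ or $v$, with the edge $uv$ counted once), so
\[
\E[X_u X_v] = (1-d/n)^{2n-3}.
\]
Expanding
\[
\Var[I] = \E[I] + \sum_{u \ne v}\bigl(\E[X_u X_v] - \E[X_u]\E[X_v]\bigr),
\]
the off-diagonal sum equals $n(n-1)(1-d/n)^{2n-3} - n(n-1)(1-d/n)^{2n-2}$, which after pulling out $(1-d/n)^{2n-3}$ becomes $n(n-1)(1-d/n)^{2n-3}\cdot (d/n) = O(n)$. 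Thus $\Var[I] = O(n)$.

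Now I would apply Chebyshev's inequality with $C = c_0/2$ (say):
\[
\Pr\!\left[I(G_d) \le Cn\right] \le \Pr\!\left[|I(G_d) - \E[I]| \ge \tfrac{c_0}{2}\,n\right] \le \frac{\Var[I]}{(c_0 n/2)^2} = O(n^{-1}),
\]
which is exactly the claim. The only delicate step is keeping track of constants well enough to certify that $\E[X_v]$ really is bounded below by an absolute constant, but this is immediate from $d = O(1)$. No obstacles of substance are expected; the whole argument is a textbook first/second moment computation and Chernoff bounds are not usable here precisely because the $X_v$ are not independent.
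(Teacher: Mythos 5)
Your proposal is correct. Note that the paper itself gives no proof of this lemma: it is imported verbatim from \cite{LNNP} (Lemma 5.7), and the paper explicitly only proves random-graph facts that are not already in the literature. Your second-moment argument therefore makes the statement self-contained rather than paralleling anything in the paper, and it is sound: $\E[X_v]=(1-d/n)^{n-1}$ is bounded below by a positive constant since $d=O(1)$, the pairwise correlation term $n(n-1)\bigl[(1-d/n)^{2n-3}-(1-d/n)^{2n-2}\bigr]=n(n-1)(1-d/n)^{2n-3}\,d/n=O(n)$ is computed correctly, and Chebyshev with $C=c_0/2$ gives the $O(n^{-1})$ failure probability. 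One cosmetic slip: your displayed identity for $\Var[I]$ omits the term $-\sum_v \E[X_v]^2$ (it should read $\Var[I]=\sum_v\bigl(\E[X_v]-\E[X_v]^2\bigr)+\sum_{u\ne v}\bigl(\E[X_uX_v]-\E[X_u]\E[X_v]\bigr)$), but since you only use it as an upper bound $\Var[I]=O(n)$, the conclusion is unaffected. If one wanted the stronger polynomially small failure probability with a sharper tail, one could instead apply a bounded-differences/Azuma argument in the edge-exposure filtration (each edge changes $I$ by at most $2$), but your Chebyshev bound already matches the $1-O(n^{-1})$ guarantee stated in the lemma.
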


\begin{lemma}\label{rg-large} If $d = O(1)$, then $L_2(G_d) < 2n^{11/12}$ with probability $1 - O\left(n^{-1/12}\right)$ for sufficiently large $n$.
\end{lemma}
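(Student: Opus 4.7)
The plan is to use a first-moment argument based on the following reduction: if $L_2(G_d)\ge 2n^{11/12}$ then $G_d$ has two disjoint components of size at least $2n^{11/12}$, so at least one of them has size in $[2n^{11/12},n/2]$. Hence it suffices to show that the expected number of components of size $k\in[2n^{11/12},n/2]$ is $O(n^{-1/12})$, and then apply Markov's inequality.

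Let $N_k$ denote the number of components of size exactly $k$. For a fixed $k$-subset $S\subseteq V$, the events ``$G(k,p)$ restricted to $S$ is connected'' and ``there are no edges between $S$ and $V\setminus S$'' are independent, so
\[
\Pr[S \text{ is a component}] = C_k\,(1-p)^{k(n-k)},\qquad C_k:=\Pr[G(k,p)\text{ connected}].
\]
Bounding $C_k\le k^{k-2}p^{k-1}$ by a union bound over spanning trees, using $\binom{n}{k}\le(ne/k)^k$, and $(1-p)^{k(n-k)}\le\exp(-dk(n-k)/n)$, a direct simplification yields
\[
\E[N_k]\;\le\;\frac{n}{d k^{2}}\exp\!\Big(k\,\phi(d)+\frac{dk^{2}}{n}\Big),\qquad \phi(d):=1+\log d-d\le 0.
\]

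Since $\phi(d)\sim -(d-1)^{2}/2$ near $d=1$, the above bound is exponentially small in $n$ whenever the linear term $k\phi(d)$ dominates the quadratic correction $dk^{2}/n$. For $k\ge 2n^{11/12}$ this dominance holds as long as $|d-1|\gtrsim n^{-1/24}$, giving an exponent of order $-n^{5/6}$; summing over $k\in[2n^{11/12},n/2]$ in that regime produces a bound of $e^{-\Omega(n^{5/6})}$, comfortably $O(n^{-1/12})$.

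For the remaining near-critical range $|d-1|\lesssim n^{-1/24}$, the first-moment estimate degenerates and the argument must target $L_2$ directly. The plan here is to invoke standard facts about near-critical random graphs: inside the critical window $|d-1|=O(n^{-1/3})$ we have $L_1(G_d)=O(n^{2/3})$ w.h.p., so a fortiori $L_2<2n^{11/12}$; in the slightly super-critical regime $n^{-1/3}\ll d-1\lesssim n^{-1/24}$, the subgraph obtained after deleting the (unique) giant is stochastically dominated by a subcritical random graph with parameter $d(1-\theta_d)<1$, from which one extracts $L_2=O(\log n/(d-1)^{2})=O(n^{1/12}\log n)$ w.h.p.; the slightly subcritical regime is symmetric. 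The main obstacle is precisely this near-critical interval: the first-moment computation fails and $L_1$ itself can exceed $2n^{11/12}$ (being of order $(d-1)n$), so one cannot conclude from $L_2\le L_1$ alone and must exploit the subcritical behavior of the graph with the giant peeled off.
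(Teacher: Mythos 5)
There is a genuine gap, and it is in the step you treat as routine: the claim that for $k\ge 2n^{11/12}$ the linear term $k\phi(d)$ dominates $dk^2/n$ whenever $|d-1|\gtrsim n^{-1/24}$. The dominance condition is $|\phi(d)| > dk/n$, i.e.\ $k \lesssim n|\phi(d)|/d \approx n(d-1)^2/2$; at $|d-1| = n^{-1/24}$ this caps $k$ at about $n^{11/12}/2$, which is \emph{below} the bottom of your range $[2n^{11/12},n/2]$, so the dominance fails on the entire range rather than holding on it. Even for constant $|d-1|$ the exponent $k\phi(d)+dk^2/n$ is positive for $k$ of order $n$ unless $d$ is far from $1$ (e.g.\ at $d=0.9$ or $d=2$ and $k=n/2$ it is $\Theta(n)$ with a positive constant), because the spanning-tree bound $C_k\le k^{k-2}p^{k-1}$ is exponentially lossy when the induced graph on $k$ vertices is not highly supercritical. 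So the first-moment estimate does not cover the regime you assign to it.

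More fundamentally, in the supercritical range $1<d<2\ln 2$ (constant) the giant component itself typically has size $\beta(d)n\in[2n^{11/12},n/2]$, so the event ``there exists a component with size in $[2n^{11/12},n/2]$'' has probability tending to $1$; your reduction from $L_2\ge 2n^{11/12}$ to that single-component event discards exactly the feature (two disjoint large components) that makes the lemma true, and no sharpening of the first moment over that event can yield $O(n^{-1/12})$. You would need to count only non-giant components (peeling off the giant, as you do only in the narrow window $|d-1|\lesssim n^{-1/24}$), or bound the probability of \emph{two} disjoint large components. This is essentially what the paper does: it splits at $d=1+n^{-1/12}$, handling the smaller range by bounding $L_1<2n^{11/12}$ directly (Theorem 5.9 of \cite{LNNP}), and the larger range by transferring to the $G(n,M)$ model and invoking Theorem 5.12 of \cite{JLR}, which controls $L_2$ itself in the supercritical regime and is the source of the $O(n^{-1/12})$ error term. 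Your treatment of the near-critical window (critical $L_1=O(n^{2/3})$, duality after removing the giant) is in the right spirit, but as written it only covers $|d-1|\lesssim n^{-1/24}$, leaving the bulk of the supercritical regime unproved.
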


\begin{proof} If $d \le 1 + n^{-1/12}$, then by Theorem 5.9 in \cite{LNNP} (with $A^2 = c^{-1}\log n$ and $\epsilon = n^{-1/12}/2$), $L_1(G_d) < 2n^{11/12}$ with probability $1-O(n^{-1})$. When $d > 1 + n^{-1/12}$ we bound $L_2(G_d)$ using Theorem 5.12 in \cite{JLR}. Observe that this result applies to the random graph model $G(n,M)$ where an instance $G_M$ is chosen u.a.r. from the set of graphs with $n$ vertices and $M$ edges. The $G(n,p)$ and $G(n,M)$ models are known to be essentially equivalent when $M \approx \binom{n}{2}p$ and we can easily transfer this result to our setting. 
	
	Let $M_d$ be the number of edges in $G_d$ and $I = [\binom{n}{2}p - \sqrt{8 d n \log n},\binom{n}{2}p + \sqrt{8 d n \log n}]$; by Chernoff bounds, 
	\begin{eqnarray}\Pr[L_2(G_d) > n^{2/3}] 
	%&\le& \sum_{m \in I} \Pr[L_2(G_d) > k | M_d = m] \Pr[M_d = m] + O(n^{-1})\nonumber\\
	&\le& \sum_{m \in I} \Pr[L_2(G_m) > n^{2/3}] \Pr[M_d = m] + O(n^{-1}). \nonumber
	\end{eqnarray}
	Let $s = m - n/2$ as in \cite{JLR}; since $d > 1 + n^{-1/12}$, then $s \ge \frac{n^{11/12}}{4}$ for $m \in I$ and $n$ sufficiently large. Theorem 5.12 in \cite{JLR} implies that $\Pr[L_2(G_m) > n^{2/3}] = O(n^{-1/12})$; thus, $L_2(G_d) < 2n^{11/12}$ with probability $1-O(n^{-1/12})$. \end{proof}

\begin{lemma}[\cite{CDFR}, Lemma 7] \label{rg-sub-1} If $d < 1$ is bounded away from 1, then $L_1(G_d) = O(\log n)$ with probability $1-O\left(n^{-1}\right).$
\end{lemma}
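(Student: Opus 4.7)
The plan is to reduce to a standard branching-process argument and then take a union bound over the $n$ vertices. Fix $\xi > 0$ with $d \le 1 - \xi$. For each vertex $v$, let $C(v)$ denote the connected component containing $v$ in $G_d$. The goal will be to show $\Pr[|C(v)| \ge K \log n] \le n^{-2}$ for a suitably large constant $K = K(\xi)$; a union bound over the $n$ vertices then gives $L_1(G_d) = O(\log n)$ with probability $1 - O(n^{-1})$.

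To bound $\Pr[|C(v)| \ge k]$, I would run a BFS exploration from $v$, revealing edges only when needed. At each step the number of newly discovered neighbors is stochastically dominated by $\mathrm{Bin}(n, d/n)$, so the total number of vertices explored is dominated by the total progeny $T$ of a Galton-Watson branching process with offspring distribution $\mathrm{Bin}(n, d/n)$, whose mean is $d \le 1 - \xi < 1$. A standard exponential-tail bound for subcritical branching processes (via the moment generating function: if $\phi(s) = \E[s^X]$ for the offspring $X$, then $\Pr[T \ge k] \le s^{-k}$ for any $s > 1$ with $\phi(s) \le s$, and such $s > 1$ exists whenever $\E X < 1$) yields $\Pr[T \ge k] \le e^{-\alpha k}$ for some $\alpha = \alpha(\xi) > 0$ uniform in $n$. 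Choosing $k = K \log n$ with $K \ge 3/\alpha$ gives $\Pr[|C(v)| \ge K \log n] \le e^{-\alpha K \log n} \le n^{-3}$, and the union bound closes the argument.

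The main obstacle is justifying the uniformity of the exponential decay rate $\alpha$: one needs that the Cram\'er transform of $\mathrm{Bin}(n, d/n)$ has a positive gap bounded away from $0$ as $n \to \infty$. This is handled by the fact that $\mathrm{Bin}(n, d/n)$ converges to $\mathrm{Poisson}(d)$ with $d \le 1-\xi$, and the Poisson offspring distribution admits $\alpha(\xi) > 0$ independent of $n$; quantitatively, one can take $s = 1 + \xi/2$ and verify $\phi(s) \le s$ directly using $(1 + (s-1)d/n)^n \le e^{(s-1)d}$. Everything else (the BFS domination and the union bound) is routine, so the only care needed is in this exponential tail estimate. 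Since the lemma is credited to \cite{CDFR}, one may alternatively just invoke their bound; the sketch above is the self-contained route.
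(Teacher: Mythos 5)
The paper does not actually prove this lemma: it is imported verbatim as Lemma 7 of \cite{CDFR} (consistent with the authors' remark that they only supply proofs for facts not already available in the random-graph literature), so there is no in-paper argument to compare against, and your self-contained route is the standard one. The structure is correct: BFS exploration dominated by a Galton--Watson process with $\mathrm{Bin}(n,d/n)$ offspring, an exponential tail for the total progeny with rate depending only on $\xi$, and a union bound over the $n$ vertices, which indeed gives failure probability $O(n^{-2})=O(n^{-1})$. One small technical slip worth fixing: the tail estimate as you quote it, $\Pr[T\ge k]\le s^{-k}$ whenever $s>1$ and $\phi(s)\le s$, is not right as stated. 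Markov's inequality applied to $s^{T}$ gives $\Pr[T\ge k]\le \E[s^{T}]\,s^{-k}$ (a multiplicative constant is needed), and via the random-walk representation one gets $\Pr[T\ge k]\le s\,(\phi(s)/s)^{k}$, which requires $\phi(s)<s$ \emph{strictly} to yield exponential decay; with $\phi(s)=s$ it does not decay at all. An equally clean alternative is $\Pr[|C(v)|\ge k]\le \Pr[\mathrm{Bin}(nk,d/n)\ge k-1]$ followed by a Chernoff bound, since the mean is at most $(1-\xi)k$. Your concrete choice $s=1+\xi/2$ does give $\phi(s)\le e^{(s-1)d}<s$ strictly (because $(\xi/2)(1-\xi)<\log(1+\xi/2)$), so the decay rate $\alpha(\xi)>0$ is uniform in $n$ and the argument closes once the tail bound is stated in one of these corrected forms; the domination and union-bound steps are fine as written.
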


\noindent
For $d > 1$, let $\gc = \gc(d)$ be the unique positive root of the equation 
\begin{equation}
e^{-d x} = 1 - x.\label{rg-root-eq}
\end{equation}
(Note that this equation has a positive root iff $d > 1$; see, e.g., \cite{JLR}.)
\begin{lemma}\label{lemma:rg-sup} Let $\tGln$ be distributed as a $G(n+m,d_n/n)$ random graph where $|m|=o(n)$ and $\lim\limits_{n \rightarrow \infty} d_n = d$. Assume $1 < d_n = O(1)$ and $d_n$ is bounded away from 1 for all $n \in \N$. Then,
	\begin{enumerate}[(i)]
		\item $L_2(\tGln) = O(\log n)$ with probability $1-O\left(n^{-1}\right).$ 
		\item For $A=o(\log n)$ and sufficiently large $n$, there exists a constant $c>0$ such that 
		\begin{equation}\label{sup-gc-eq}
		\Pr[|L_1(\tGln) - \gc(d) n| > |m| + A\sqrt{n}] \le e^{-c A^2}.
		\end{equation}
	\end{enumerate}
\end{lemma}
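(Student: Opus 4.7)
The plan is to reduce $\tGln \sim G(n+m,d_n/n)$ to a standard supercritical Erdős--Rényi graph and then invoke classical concentration results. Write $N = n+m$ and $\tilde d_n = d_n(1+m/n)$, so that $\tGln$ is distributed as $G(N,\tilde d_n/N)$. Since $|m|=o(n)$, we have $N=n(1+o(1))$ and $\tilde d_n = d_n + o(1)$, so $\tilde d_n \to d > 1$ and $\tilde d_n$ is bounded away from $1$ for all large $n$. Thus $\tGln$ falls squarely in the supercritical regime.

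For part (i), I would apply a classical bound on the second-largest component in supercritical $G(N,c/N)$ with $c$ bounded away from $1$ (e.g.\ Theorem 5.6 in \cite{JLR}), which gives $L_2(\tGln) = O(\log N) = O(\log n)$ with probability $1 - O(N^{-1}) = 1 - O(n^{-1})$.

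For part (ii), I would combine a law of large numbers $L_1(\tGln)/N \to \gc(\tilde d_n)$ with a Gaussian tail bound on the $\sqrt{N}$-scale, yielding
\[
\Pr\bigl[|L_1(\tGln) - \gc(\tilde d_n)N| > A\sqrt{N}\bigr] \le e^{-c_0 A^2}
\]
for $A = o(\log n)$ and some constant $c_0 > 0$. The natural approach is to expose edges one at a time and use the Azuma--Hoeffding inequality on the corresponding edge-exposure martingale; since $L_1$ can change by more than a constant when an edge is added, one either conditions on the good event from part~(i) (on which $L_1$ changes by at most $O(\log n)$ per edge) or uses the exploration-process / BFS representation together with martingale concentration, as in Pittel or Bollob\'as--Riordan. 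To replace the centering $\gc(\tilde d_n) N$ by $\gc(d) n$, I would use the Lipschitz continuity of $\gc$ on a neighborhood of $d$ bounded away from $1$, writing
\[
\gc(\tilde d_n) N - \gc(d) n = [\gc(\tilde d_n) - \gc(d)] N + \gc(d) m,
\]
so that the right-hand side is $O(|m|)$ (plus a term vanishing with the convergence rate of $d_n \to d$), which is absorbed into the $|m|$ term in \eqref{sup-gc-eq}.

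The main obstacle is the Gaussian tail in part (ii): a naive Azuma bound fails because $L_1$ is not $O(1)$-Lipschitz in a single edge. The cleanest remedy is to run the argument on the high-probability event $\{L_2 \le C\log n\}$ from part~(i), on which the Lipschitz constant is $O(\log n)$ and Azuma then delivers the desired $e^{-c_0 A^2}$ tail for $A = o(\log n)$. Handling the centering shift from $\gc(\tilde d_n)$ to $\gc(d)$ is a minor but somewhat delicate bookkeeping step, dependent on how fast $d_n \to d$ in the intended application.
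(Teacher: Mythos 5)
Your reduction to $G(N,\tilde d_n/N)$ with $N=n+m$, $\tilde d_n=d_n(1+m/n)$, and the re-centering step $\gc(\tilde d_n)N-\gc(d)n=[\gc(\tilde d_n)-\gc(d)]N+\gc(d)m$ are exactly what the paper does (it sets $d_M=d_nM/n$ and uses continuity of $\gc$), and part (i) by citing a standard bound on $L_2$ of a supercritical random graph matches the paper's citation of Lemma 7 in \cite{CDFR}. The gap is in your self-contained argument for part (ii). Conditioning on $\{L_2\le C\log n\}$ so that the increments are $O(\log n)$ and then applying Azuma--Hoeffding cannot give the tail $e^{-cA^2}$ at the $\sqrt n$ scale: with the vertex-exposure filtration you have $N$ steps with increments $O(\log n)$, so the exponent is of order $A^2 n/(N\log^2 n)=\Theta(A^2/\log^2 n)$, which is $o(1)$ --- i.e.\ the bound is vacuous --- throughout the lemma's intended range $A=o(\log n)$; with edge exposure the deficit is polynomial in $n$, since there are $\binom{N}{2}$ steps. (There is also the secondary issue that ``Azuma on a good event'' is not literally Azuma; one needs a typical-bounded-differences variant, which introduces further error terms --- but the quantitative loss above is the fatal point.)

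So the real content of part (ii) has to come from your fallback, the exploration-process/BFS route ``as in Pittel or Bollob\'as--Riordan,'' which you only name without giving any argument. That is effectively what the paper does: it simply invokes an existing sharp concentration estimate (Lemma 11 in \cite{CDFR}) stating $\Pr[|L_1-\gc(d_M)M|>A\sqrt M]\le e^{-cA^2}$ for supercritical $G(M,d_M/M)$, and the entire proof of the lemma reduces to the re-centering bookkeeping you already have. As written, your proposal replaces that citation with a concentration argument that does not achieve the stated tail, so part (ii) is not established; either cite such a $\sqrt n$-scale Gaussian-tail result directly or actually carry out the exploration-process martingale analysis (where the increments have bounded conditional variance, not merely an $O(\log n)$ worst-case bound).
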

\begin{proof}
	Part $(i)$ follows immediately from Lemma 7 in \cite{CDFR}. For Part $(ii)$, let $M=n+m$ and $d_M=d_nM/n.$ By Lemma 11 in \cite{CDFR}, there exists a constant $c > 0$ such that
	\begin{eqnarray}
	e^{-c A^2} &\ge& \Pr[|L_1(\tGln) - \gc(d_M) M| > A\sqrt{M}] \nonumber\\
	&\ge& \Pr[|L_1(\tGln) - \gc(d_M) n| > |\gc(d_M) m| + A\sqrt{M}] \nonumber\\
	&\ge& \Pr[|L_1(\tGln) - \gc(d) n| > |\gc(d) n - \gc(d_M)n| + |\gc(d_M) m| + A\sqrt{M}]. \nonumber
	\end{eqnarray}
	Now, since $d_M \rightarrow d$, by continuity$\gc(d_M) \rightarrow \gc(d)$ as $n \rightarrow \infty$. Therefore, for a sufficiently large $n$,
	\[\Pr[|L_1(\tGln) - \gc(d) n| > |\gc(d_M) m| + 3 A\sqrt{n}] \le e^{-c A^2}\]
	and the result follows since $\gc(d_M) \le 1$. \end{proof}

\begin{cor} \label{cor:rg-sup} With the same notation as in Lemma \ref{lemma:rg-sup}, 
	\[|\E[L_1(\tGln)] - \gc(d) n| < |m| + O(\sqrt{n}).\]
\end{cor}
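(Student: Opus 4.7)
\medskip\noindent
\textbf{Proof proposal.} The plan is to convert the Gaussian tail bound in Lemma~\ref{lemma:rg-sup}(ii) into an expectation bound via the standard tail-integral identity. Starting from
\[|\E[L_1(\tGln)] - \gc(d)n| \le \E\bigl[|L_1(\tGln) - \gc(d)n|\bigr] = \int_0^\infty \Pr\bigl[|L_1(\tGln) - \gc(d)n| > t\bigr]\,dt,\]
I would split the integral at $t=|m|$, bounding the probability trivially by $1$ on $[0,|m|]$ (which contributes exactly $|m|$) and then performing the change of variables $t = |m| + A\sqrt{n}$ on the remaining range. This reduces the task to showing
\[\sqrt{n}\int_0^\infty \Pr\bigl[|L_1(\tGln) - \gc(d)n| > |m| + A\sqrt{n}\bigr]\,dA = O(\sqrt{n}).\]

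For the bulk of the integrand, Lemma~\ref{lemma:rg-sup}(ii) directly gives the bound $e^{-cA^2}$, whose integral over $[0,\infty)$ is $O(1)$, yielding the desired $O(\sqrt n)$ contribution. Combined with the $|m|$ piece above, this gives $|\E[L_1(\tGln)] - \gc(d)n| \le |m| + O(\sqrt{n})$ as claimed.

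The one technical nuisance is that Lemma~\ref{lemma:rg-sup}(ii) is only stated for $A = o(\log n)$, so the exponential bound cannot be used over the whole half-line. To handle the far tail I would split off the range $A \ge A_n$ for some threshold $A_n = o(\log n)$ (e.g., $A_n = \log n/\log\log n$): on this range one uses the deterministic bound $L_1(\tGln) \le n+m$, which forces $|L_1(\tGln) - \gc(d)n|$ to be at most a linear function of $n$, and so the probability vanishes once $A \gg \sqrt{n}$. The integral over the intermediate band $[A_n, O(\sqrt n)]$ can then be bounded by applying the exponential tail bound at $A = A_n$ (which gives negligible probability $e^{-c A_n^2}$) multiplied by the width of the band; this is easily seen to be $o(\sqrt n)$ and thus absorbed into the $O(\sqrt n)$ term. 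Since everything else is a routine manipulation of a Gaussian-tail integral, this splitting is really the only step requiring care.
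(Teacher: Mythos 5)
Your proof is correct and follows essentially the same route as the paper, whose entire argument is ``follows immediately by integrating (\ref{sup-gc-eq})'' --- that is, the tail-integral identity applied to Lemma~\ref{lemma:rg-sup}(ii), exactly as you do. Your additional handling of the restriction $A = o(\log n)$ (cutting the far tail with the deterministic bound $L_1(\tGln) \le n+m$ and absorbing the intermediate band via $e^{-cA_n^2}$) merely supplies a detail the paper's one-line proof leaves implicit.
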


\begin{proof} Follows immediately by integrating (\ref{sup-gc-eq}).
\end{proof}

\begin{lemma} \label{sup-gc-ld} Consider a $\Gln$ random graph where $\lim_{n \rightarrow \infty} d_n = d$. Assume $1 < d_n = O(1)$ and $d_n$ is bounded away from 1 for all $n \in \N$. Then, for any constant $\varepsilon \in (0,1)$ there exists a constant $c(\varepsilon) > 0$ such that, for sufficiently large $n$, 
	\begin{equation}
	\Pr[|L_1(\Gln) - \gc(d_n) n| > \varepsilon n] \le e^{-c(\varepsilon) n}. \nonumber
	\end{equation}
\end{lemma}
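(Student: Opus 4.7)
Proof plan. The statement is a standard large-deviations estimate for the giant component of a supercritical Erd\H{o}s--R\'enyi random graph. It strengthens Lemma~\ref{lemma:rg-sup}(ii), which provides only $O(\sqrt n)$-scale concentration with sub-exponential tails, into $n$-scale concentration with exponential tails. The plan is to treat the upper and lower tails separately.

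For the upper tail $\Pr[L_1(\Gln) > (\gc(d_n)+\varepsilon)n]$, I would bound the component size of an arbitrary fixed vertex via BFS and then union-bound over the $n$ starting points. In BFS from $v$, the number of pending vertices after $t$ reveals evolves as a random walk with increments distributed as $\text{Bin}(m_t,d_n/n)-1$, where $m_t$ is the unexplored population size. The event $|C(v)|\ge \alpha n$ with $\alpha\ge\gc(d_n)+\varepsilon$ forces this walk to stay positive for $\alpha n$ steps, but once $\gc(d_n)n$ vertices have been revealed the conditional per-step drift becomes $d_n(1-\alpha)-1<-c(\varepsilon)<0$, using the defining relation $e^{-d_n\gc(d_n)}=1-\gc(d_n)$ in~(\ref{rg-root-eq}). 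An exponential-moment bound applied to the partial sum then yields $\Pr[|C(v)|\ge \alpha n]\le e^{-c(\varepsilon)n}$, and the union bound over $v$ costs only a factor of $n$.

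For the lower tail $\Pr[L_1(\Gln) < (\gc(d_n)-\varepsilon)n]$, the plan is sprinkling. Fix $\delta=\delta(\varepsilon)>0$ small and write the edges of $\Gln$ as a union $E_1\cup E_2$ of two independent Bernoulli samples with parameters $p_1=(1-\delta)d_n/n$ and $p_2\approx \delta d_n/n$, chosen so the marginal is $d_n/n$; let $G_i$ be the graph with edges $E_i$. One first shows that $G_1$ contains a component of linear size, $|L_1(G_1)|\ge \rho n$ for some $\rho=\rho(\delta)>0$, with probability at least $1-e^{-c'n}$. Conditional on $G_1$, the events $\{v\text{ is }G_2\text{-adjacent to }L_1(G_1)\}$ for $v\notin L_1(G_1)$ are independent with individual probability $\ge 1-e^{-p_2\rho n}=\eta(\delta)>0$. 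A Chernoff bound then absorbs a $(1-o(1))\eta$-fraction of the remaining vertices into the giant of $\Gln=G_1\cup G_2$ with probability $1-e^{-\Omega(n)}$. Iterating this sprinkling step a constant number of times drives the giant fraction geometrically to within $\varepsilon$ of $\gc(d_n)$, with the required exponential probability.

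The main obstacle is the preliminary ``linear giant exists'' bound for $G_1$: upgrading the $e^{-\Omega((\log n)^2)}$ tails of Lemma~\ref{lemma:rg-sup}(ii) to a genuinely exponential bound. This is the crux of any large-deviations argument for the giant component, since all the random-graph tools stated in this section give only polylogarithmic exponents. One way to bootstrap is to run the sprinkling iteration from an even smaller first-stage density at which a constant-probability linear-giant statement can be proved from scratch (for example, via a second-moment computation on the number of vertices whose BFS tree reaches depth $K$ for a large constant $K$); each subsequent sprinkling round then contracts the non-giant fraction by a constant factor, so after $O(\log(1/\varepsilon))$ rounds the giant occupies at least $(\gc(d_n)-\varepsilon)n$ vertices with exponential probability. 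Alternatively, one can invoke a known large-deviations principle for $L_1/n$ in sparse random graphs.
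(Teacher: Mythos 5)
Your route is genuinely different from the paper's. The paper disposes of this lemma in a few lines: it sandwiches $\Gln$ between $G_{\gamma_1}$ and $G_{\gamma_2}$ for constants $\gamma_1<d_n<\gamma_2$ via stochastic domination, invokes Lemma 5.4 of \cite{LNNP} with $A=\varepsilon\sqrt n$ (a known exponential-rate concentration bound for the giant), and uses monotonicity and continuity of $\gc(\cdot)$ to transfer the conclusion to $\gc(d_n)$. You instead attempt a from-scratch large-deviations proof. Your upper-tail argument is essentially sound: the exploration/negative-drift idea works, although as stated the ``walk stays positive for $\alpha n$ steps'' framing skips control of the walk's height when the revealed count first reaches $\gc(d_n)n$; the standard fix is to bound the total number of revealed vertices after $k$ steps by $1+\mathrm{Bin}(n-1,1-(1-p)^k)$ and apply Chernoff at $k=\lceil(\gc(d_n)+\varepsilon)n\rceil$, using that $1-e^{-d_nx}-x<-c(\varepsilon)$ for $x\ge\gc(d_n)+\varepsilon$.

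The lower tail, however, has a genuine gap, and it sits exactly at the point you flag as the crux. First, the proposed bootstrap cannot work: if the first-stage ``linear giant exists'' statement holds only with constant probability (from a second-moment argument), then no amount of subsequent sprinkling repairs the constant-probability failure event of stage one, so the final bound cannot be better than that constant; to get $e^{-c(\varepsilon)n}$ you need the base case itself at exponential rate, which is precisely the nontrivial content of the lemma. Second, even granting a linear seed with exponential probability, the iteration you describe --- absorb vertices directly $G_2$-adjacent to the current giant, repeat a constant number of rounds --- does not reach $(\gc(d_n)-\varepsilon)n$ under a fixed total density budget: the resulting recursion $x\mapsto x+(1-x)(1-e^{-\delta d_n x})$ with $\sum_i\delta_i\le 1$ saturates strictly below $\gc(d_n)$ when the seed fraction is small (e.g.\ for $d_n=1.5$ and seed $0.01$ it stalls near $0.04$, versus $\gc\approx0.58$). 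The correct sprinkling argument must instead show that the $(1-\delta)d_n$-graph already has $\approx\gc((1-\delta)d_n)n$ vertices in components of size at least a large constant $K$ (again an exponential-rate statement) and then merge those components by sprinkling, finishing with continuity of $\gc$. Your fallback --- citing a known large-deviations principle for $L_1/n$ --- is legitimate, but then your proof reduces to the paper's citation-based argument, and you would still need the small step the paper does include and you omit: handling the $n$-dependent $d_n$ by the monotone sandwich and the continuity of $\gc$ so that the deviation is measured from $\gc(d_n)n$ rather than $\gc(d)n$.
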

\begin{proof} This result follows easily from Lemma 5.4 in \cite{LNNP}. Let $a_1$ and $a_2$ be constants such that $d \in (\gamma_1,\gamma_2)$. Since $\{d_n\} \rightarrow d,$ there exists $N \in \N$ such that $d_n \in (\gamma_1,\gamma_2)$ for all $n > N$. 
	
	By Lemma 5.4 in \cite{LNNP} (with $A = \varepsilon \sqrt{n}$), there exist constants $c_1(\varepsilon)$, $c_2(\varepsilon) > 0$ such that
	$\Pr[L_1(G_{\gamma_1}) < \gc(\gamma_1) n  - \varepsilon n] \le \exp(-c_1(\varepsilon) n)$ and
	$\Pr[L_1(G_{\gamma_2}) > \gc(\gamma_2) n  + \varepsilon n] \le \exp(-c_2(\varepsilon) n)$. By monotonicity $\gc(\gamma_2) > \gc(d_n) > \gc(\gamma_1)$,  and by continuity we can choose $\gamma_1$ and $\gamma_2$ sufficiently close to each other such that $|\gc(\gamma_2) - \gc(\gamma_1)| < \varepsilon.$ Observe also that $L_1(G_{\gamma_2}) \succeq L_1(\Gln) \succeq L_1(G_{\gamma_1})$, where $\succeq$ indicates stochastic domination\footnote{For distributions $\mu$ and $\nu$ over a partially ordered set $\Gamma$, we say that $\mu$ stochastically
		dominates $\nu$ if $\int g\;d\nu \le \int g\;d\mu$ for all increasing functions $g : \Gamma \rightarrow \R$.}. Thus, 
	\[e^{-c_1(\varepsilon) n} \ge \Pr[L_1(G_{\gamma_1}) < \gc(\gamma_1) n  - \varepsilon n] \ge \Pr[L_1(G_{d_n}) < \gc(\gamma_1) n  - \varepsilon n] \ge \Pr[L_1(G_{d_n}) < \gc(d_n) n  - 2 \varepsilon n]\] 
	and similarly,
	\[e^{-c_2(\varepsilon) n} \ge \Pr[L_1(G_{\gamma_2}) > \gc(\gamma_2) n  + \varepsilon n] \ge \Pr[L_1(G_{d_n}) > \gc(\gamma_2) n  + \varepsilon n] \ge \Pr[L_1(G_{d_n}) > \gc(d_n) n  + 2 \varepsilon n].\]
	Hence, there exist a constant $c(\varepsilon)$ such that $\Pr[|L_1(\Gln) - \gc(d_n) n| > \varepsilon n] \le e^{-c(\varepsilon) n}$. \end{proof}

\begin{lemma} \label{sub-strong} Assume $d$ is bounded away from 1. If $d < 1$, then $L_1(G_d) = O(\sqrt{n})$ with probability $1 - e^{-\Omega(\sqrt{n})}.$ If $d > 1,$ then $L_2(G_d) = O(\sqrt{n})$ with probability $1 - e^{-\Omega(\sqrt{n})}$.
\end{lemma}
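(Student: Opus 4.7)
The plan is to handle the subcritical and supercritical cases separately, both based on BFS exploration of components and branching-process tail bounds. For the subcritical case ($d<1$ bounded away from 1), I would stochastically dominate the component $C(v)$ of any fixed vertex $v$ by the total progeny $T$ of a Galton-Watson branching process with Binomial$(n-1,d/n)$ offspring distribution. Since the mean offspring $(n-1)d/n$ is uniformly below $1$, the standard subcritical total-progeny tail bound gives $\Pr[T\ge k]\le e^{-c(d)k}$ for some $c(d)>0$. A union bound over the $n$ choices of $v$ then yields $\Pr[L_1(G_d)\ge A\sqrt n]\le n\, e^{-c(d)A\sqrt n}$, which is $e^{-\Omega(\sqrt n)}$ for $A$ a sufficiently large constant.

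For the supercritical case ($d>1$ bounded away from 1), the first step is a ``dichotomy'': with probability $1-e^{-\Omega(\sqrt n)}$, every component of $G_d$ has size either at most $A_1\sqrt n$ or at least $\alpha n$, for suitable constants $A_1,\alpha>0$ depending on $d$. This is proved via the BFS exploration walk $S_t=1+\sum_{i=1}^t(\xi_i-1)$, where $\xi_i$ is the number of new neighbors discovered at step $i$. For $t\in(A_1\sqrt n,\alpha n)$ with $\alpha$ small enough that $S_t$ has uniformly positive drift throughout the window, the event $|C(v)|=T_0=t$ forces $\sum_{i\le t}\xi_i=t-1$, a deviation of order $(d-1)t$ below the mean $\approx dt$. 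Combining the cycle lemma $\Pr[T_0=t]=\frac{1}{t}\Pr[\sum\xi_i=t-1]$ with a Chernoff bound gives $\Pr[T_0=t]\le e^{-ct}$; summing over $t$ in the window and union-bounding over $v$ yields the dichotomy.

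To conclude, I would combine the dichotomy with Lemma~\ref{sup-gc-ld}, which gives $L_1=\gc(d)n\pm\rho n$ with probability $1-e^{-\Omega(n)}$ for any constant $\rho>0$. Conditional on the largest component $C_1$, the induced subgraph on $V\setminus C_1$ is distributed as $G(n-|C_1|,d/n)$ subject to the conditioning that no component exceeds $|C_1|$; its effective mean degree is $d(n-|C_1|)/n\approx d(1-\gc)<1$, which is subcritical by the standard branching-process duality. Applying the subcritical analysis to this induced subgraph then gives $L_2(G_d)=O(\sqrt n)$ with the desired probability. The main obstacle is the conditioning in this final step, since the induced subgraph is not unconditionally a $G(m,d/n)$: one must verify that the conditioning event has probability bounded below by a positive constant, so that the tail bounds only inflate by an $O(1)$ factor, or equivalently carry out a two-round exposure argument in which one first reveals and explores the candidate giant and then examines the (subcritical) complement directly.
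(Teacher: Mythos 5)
Your proposal is correct and follows essentially the same route as the paper: for $d>1$ the paper likewise combines the large-deviation bound of Lemma~\ref{sup-gc-ld} for $L_1$ with the discrete duality principle to reduce to a subcritical graph on the complement of the giant, and for $d<1$ it simply cites Lemma~6 of \cite{GJ}, which is precisely the exponential component-tail bound you re-derive via branching-process domination. Your intermediate ``dichotomy'' step is not actually needed once you have Lemma~\ref{sup-gc-ld} plus duality, and your explicit treatment of the conditioning in the duality step (the conditioning event has probability $1-o(1)$, so the subcritical tail bounds are only inflated by an $O(1)$ factor) is a sound elaboration of what the paper delegates to the cited reference.
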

\begin{proof} When $d < 1$ the result follows immediately from Lemma 6 in \cite{GJ}. When $d > 1$, by Lemma \ref{sup-gc-ld}, $L_1(G_d) \in I = [(\gc(d)-\varepsilon) n,(\gc(d)+\varepsilon) n]$ with probability $1-e^{-\Omega(n)}$. Conditioning on $L_1(G_d) = m$, by the {\it discrete duality principle} (see, e.g., \cite{H}) the remaining subgraph is distributed as a $G(n-m,d/n)$ random graph which is sub-critical for $m \in I$ and $\varepsilon$ sufficiently small. Therefore as for $d < 1$, $L_2(G_d) = O(\sqrt{n})$ with probability $1-e^{-\Omega(\sqrt{n})}$ as desired. \end{proof}

\begin{lemma} \label{rg-suscept} Assume $d$ is bounded away from 1. If $d < 1,$ then $\sum_{i \ge 1} L_i(G_d)^2 = O(n)$ with probability $1-O\left(n^{-1}\right).$ If $d > 1,$ then $\sum_{i \ge 2} L_i(G_d)^2 = O(n)$ with probability $1-O\left(n^{-1}\right).$
\end{lemma}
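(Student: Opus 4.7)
My plan splits along the two cases on $d$. Both rest on the identity
\[
\sum_i L_i(G_d)^2 \;=\; \sum_{v \in V} |C(v)|,
\]
where $C(v)$ denotes the connected component of $G_d$ containing $v$.

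\textbf{Sub-critical case ($d<1$).} The expectation is easy: $|C(v)|$ is stochastically dominated by the total progeny of a $\mathrm{Bin}(n,d/n)$ Galton--Watson process, which is sub-critical with mean bounded away from~$1$ and hence has exponentially decaying tails and $O(1)$ mean, so $\E\bigl[\sum_v|C(v)|\bigr] = O(n)$. To boost this to a $1 - O(n^{-1})$ tail bound I would apply a vertex-exposure Doob martingale to $S = \sum_v |C(v)|$: exposing a new vertex and its incident edges can only merge existing components into a new component whose size is bounded by $L_1(G_d)$, and the induced change in $S$ is bounded by $L_1(G_d)^2$. Lemma~\ref{rg-sub-1} guarantees $L_1(G_d) = O(\log n)$ with probability $1-O(n^{-1})$, so after a standard truncation step that enforces uniformly bounded differences of size $O(\log^2 n)$, Azuma--Hoeffding yields
\[
\Pr\bigl[\,|S - \E S| > cn\,\bigr] \;\le\; 2\exp\bigl(-\Omega(n/\log^4 n)\bigr) + O(n^{-1}) \;=\; O(n^{-1}),
\]
which combined with $\E S = O(n)$ gives the claim.

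\textbf{Super-critical case ($d>1$).} I would reduce to the previous case via the discrete duality principle already invoked in the proof of Lemma~\ref{sub-strong}. By Lemma~\ref{sup-gc-ld}, $|\mathcal{L}(G_d)| = m$ with $|m - \gc(d) n| \le \varepsilon n$ except on an event of probability $e^{-\Omega(n)}$, for any prescribed $\varepsilon>0$. Conditionally, the induced subgraph on $V\setminus \mathcal{L}(G_d)$ is distributed as $G(n-m, d/n)$, whose effective density $d(n-m)/n$ approaches $d(1-\gc(d))$. The defining equation $e^{-d\gc}=1-\gc$ gives the Taylor expansion $d = 1 + \gc/2 + O(\gc^2)$, so $d(1-\gc(d)) = 1 - \gc(d)/2 - O(\gc(d)^2)$, which is bounded away from~$1$ whenever $d$ is. Thus for $\varepsilon$ small enough the residual graph is sub-critical with parameter bounded away from~$1$, and the sub-critical result applied to this dual graph gives $\sum_{j\ge 1} L_j^2 = O(n-m) = O(n)$ with the required probability. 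These components are exactly the $L_i(G_d)$ for $i\ge 2$, proving the bound.

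\textbf{Main obstacle.} The expectation bound in the sub-critical case is essentially textbook, but the $O(n^{-1})$ tail bound is what requires care: a naive Markov inequality on $\E[\sum L_i^2]$ yields only constant probability. The cleanest route appears to be the vertex-exposure martingale above, in which one uses Lemma~\ref{rg-sub-1} to justify a uniform $O(\log^2 n)$ bound on the Lipschitz constant after truncation; a more laborious alternative would be a direct second-moment computation for $\sum_v |C(v)|$ using the covariance decomposition $\E[|C(u)||C(v)|] = \E[|C(u)|^2 \1_{u\sim v}] + \E[|C(u)||C(v)|\,\1_{u\not\sim v}]$ followed by Chebyshev.
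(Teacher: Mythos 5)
Your super-critical reduction is exactly the paper's proof: condition on the size of the giant via Lemma \ref{sup-gc-ld}, invoke the discrete duality principle as in Lemma \ref{sub-strong}, and apply the sub-critical bound to the residual $G(n-m,d/n)$ (your Taylor expansion only justifies that the dual parameter stays away from $1$ for $d$ near $1$, but the full claim is standard and the paper glosses it the same way). For the sub-critical case the paper is much shorter than your main route: it cites the susceptibility result of \cite{JM} (Theorem 1.1), which controls the mean and fluctuations of $\sum_v |C(v)|$, and finishes with Chebyshev; this is essentially the ``more laborious alternative'' you mention at the end (covariance decomposition plus Chebyshev), except that the moment estimates are outsourced to the literature.

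Your primary sub-critical argument, the vertex-exposure martingale, has a genuine soft spot at the step ``after a standard truncation step that enforces uniformly bounded differences of size $O(\log^2 n)$, Azuma--Hoeffding yields\dots''. Truncating the summands to $\min(|C(v)|,K)$ does not bound the worst-case differences: a single exposed vertex can merge linearly many small components, changing even the truncated sum by order $nK$; and conditioning on the event $\{L_1(G_d)=O(\log n)\}$ destroys the product structure on which Azuma rests. The rigorous formulations of ``bounded differences on a typical event'' (Kutin--Niyogi, Warnke, McDiarmid's exceptional-set extension) do apply here, but they incur an additive penalty of order $\poly(n)\cdot\Pr[\neg\Gamma]$, coming from the $n$ exposure steps and the worst-case difference bound; with only the $1-O(n^{-1})$ guarantee of Lemma \ref{rg-sub-1} that penalty is $\Omega(1)$, not $O(n^{-1})$. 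The fix is standard --- for $d<1$ bounded away from $1$ the component-size tail is exponential, so $\Pr[L_1(G_d)>C\log n]=O(n^{-K})$ for any fixed $K$ once $C$ is large --- but as written your martingale route does not deliver the claimed $O(n^{-1})$ failure probability, whereas your Chebyshev fallback (equivalently, the paper's appeal to \cite{JM}) does.
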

\begin{proof} When $d < 1$ the result follows by Chebyshev's inequality from Theorem 1.1 in \cite{JM}. When $d > 1$ the result follows from the discrete duality principle as in Lemma \ref{sub-strong}. \end{proof}

%\pagebreak
\subsection{The random-cluster model}

Recall from the introduction that the mean-field random-cluster model exhibits a phase transition
at $\lambda=\lambda_c(q)$ (see~\cite{BGJ}): in the sub-critical regime $\lambda < \lambda_c$
the largest component is of size $O(\log n)$, while in the super-critical regime $\lambda > \lambda_c$
there is a unique giant component of size {\raise.27ex\hbox{$\scriptstyle\mathtt{\sim}$}}$\theta_r n$, where $\gcrc = \gcrc(\lambda,q)$ is the largest $x > 0$ satisfying the equation 
\begin{eqnarray}
e^{-\lambda x} = 1 - \frac{q x}{1+(q-1)x}. \label{rc-gc-1}
\end{eqnarray}
(Note that, as expected, this equation is identical to (\ref{rg-root-eq}) when $q=1$, and $\gcrc(\lambda,q) < \gc(\lambda)$ for all $q > 1$.) The following is a more precise statement of this fact. 

\begin{lemma}[\cite{BGJ}]\label{rc-sub} \label{rc-sup}
	Let $G$ be distributed as a mean-field random-cluster configuration
	where $\lambda > 0$ and $q > 1$ are constants independent of $n$. 
	If $\lambda < \lambda_c$, then $L_1(G) = O(\log n)$ w.h.p. If $\lambda > \lambda_c$, then w.h.p.\ $|L_1(G)-\gcrc n| = O(n \omega^{-1}(n))$ for some sequence $\omega(n)$ satisfying $\omega(n) \rightarrow \infty$. 
\end{lemma}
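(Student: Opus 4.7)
The plan is to exploit the identity $\mu_{p,q}(A) = Z^{-1}\,q^{c(A)}\,\Pr_{G(n,p)}[G=A]$, which expresses the random-cluster measure as a $q^{c(A)}$-tilt of the Erd\H{o}s-R\'enyi graph $G(n,\lambda/n)$. The analysis then reduces to controlling the joint behavior of $L_1(G)$ and $c(G)$ under this tilted measure, and locating the dominant contribution to $\mu_{p,q}$ as a function of the size of the largest component.

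For the sub-critical case $\lambda<\lambda_c(q)$, I would proceed via a first-moment argument. For a fixed $S\subset V$ of size $\ell$, the probability under $G(n,\lambda/n)$ that $S$ is a connected component is bounded by $\ell^{\ell-2}(\lambda/n)^{\ell-1}(1-\lambda/n)^{\ell(n-\ell)}$ via Cayley's formula. Combined with the factorization $q^{c(A)} = q\cdot q^{c(A|_{V\setminus S})}$ and the observation that the partition function on $V\setminus S$ is independent of the subgraph induced on $S$, the expected number of components of size $\ell$ under $\mu_{p,q}$ is bounded, after Stirling, by an expression that decays geometrically in $\ell$ once $\lambda$ is strictly below the variationally defined $\lambda_c(q)$. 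Summing over $\ell\ge C\log n$ for a sufficiently large $C$ then gives $L_1 = O(\log n)$ w.h.p. The required lower bound on $Z$ follows from exhibiting a single configuration (e.g.\ the empty one) with non-negligible weight.

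For the super-critical case $\lambda>\lambda_c(q)$, I would decompose
\[
\mu_{p,q}(L_1=k) \;\propto\; \binom{n}{k}\,C_k(\lambda/n)\,(1-\lambda/n)^{k(n-k)}\,q\,Z^{<k}_{n-k}(\lambda,q),
\]
where $C_k(p)$ is the probability that $G(k,p)$ is connected and $Z^{<k}_{n-k}$ is the random-cluster partition function on $n-k$ vertices restricted to configurations with all components of size strictly less than $k$. Taking logarithms and applying Stirling, the right-hand side takes the form $\exp(n F(k/n)+o(n))$ for an explicit rate function $F(\theta)$. A direct calculus computation, using the defining equation (\ref{rc-gc-1}) and the definition of $\lambda_c(q)$, shows that $F$ is uniquely maximized at $\theta=\gcrc$. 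A discrete Laplace-type argument then gives $|L_1-\gcrc n|=O(n\omega^{-1}(n))$ w.h.p.\ for any $\omega(n)\to\infty$ growing slowly enough to absorb the $o(n)$ error terms.

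The main obstacle is estimating $C_k(\lambda/n)$ and $Z^{<k}_{n-k}$ with sufficient precision to pin down the rate function $F$ and, crucially, to rule out the spurious competitor at $\theta=0$. The connection probability $C_k(\lambda/n)$ for $k=\Theta(n)$ admits clean large-deviation asymptotics, but controlling $Z^{<k}_{n-k}$ requires analyzing a sub-critical random-cluster partition function on $n-k$ vertices, which is itself a lower-dimensional instance of the same problem and forces a bootstrap structure. Verifying that the unique global maximizer of $F$ on $(0,1)$ is $\gcrc$, and that it dominates $\theta=0$ precisely when $\lambda>\lambda_c(q)$, is where the bulk of the technical work in \cite{BGJ} lies.
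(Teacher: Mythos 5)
First, note that the paper does not prove this lemma at all: it is imported verbatim from \cite{BGJ}, and the only thing proved in the paper itself is the sharper Corollary~\ref{cor:rc-sub}, whose proof reuses the key device of \cite{BGJ} (Lemmas 3.1--3.2 there): color each component red independently with probability $1/q$; conditioned on the red vertex set, the red subgraph is an Erd\H{o}s--R\'enyi graph $G(n_r,p)$. That coloring representation is how \cite{BGJ} transfers standard $G(n,p)$ component-structure results to $\mu_{p,q}$ and pins down $\gcrc$ via the self-consistency equation (\ref{rc-gc-1}); it avoids ever needing sharp asymptotics for the connectivity probability $C_k(\lambda/n)$ or for restricted partition functions. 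Your route --- an exact decomposition of $\mu_{p,q}(L_1=k)$ and a Laplace/rate-function analysis --- is a genuinely different (and in principle viable) strategy, but it is substantially heavier than the coloring argument, and as written it is a plan rather than a proof: you yourself defer the identification of the maximizer of $F$ and the control of $Z^{<k}_{n-k}$ to ``the bulk of the technical work in \cite{BGJ}''.

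Beyond that, the sub-critical argument has a concrete gap. The expected number of components on a fixed $\ell$-set is
$\binom{n}{\ell}\,q\,C_\ell(p)\,(1-p)^{\ell(n-\ell)}\,Z_{n-\ell}/Z_n$, so the first-moment bound hinges on a \emph{two-sided} comparison of $Z_{n-\ell}$ with $Z_n$, i.e.\ on the free energy of the model --- exactly the quantity whose computation is the hard content. Bounding $Z_n$ from below by the weight of the empty configuration, as you propose, loses an exponential factor (of order $e^{\Theta(\ell)}$ already for the relevant range of $\ell$, and $e^{\Theta(n)}$ for linear $\ell$), so the claimed ``geometric decay in $\ell$ after Stirling'' does not follow. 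The issue is not cosmetic: for $q>2$ and $\lambda\in(\ls,\lambda_c)$ the measure has a metastable ordered competitor near $\theta_r n$, so the first moment at $\ell=\Theta(n)$ is small only by the free-energy gap between the disordered and ordered phases; no per-vertex geometric bound derived from the Cayley estimate alone can see this, and the definition of $\lambda_c(q)$ enters precisely through that comparison. So either you must first establish the free-energy asymptotics (incurring the same bootstrap you flag in the super-critical case), or you should switch to the coloring representation of \cite{BGJ}, under which the sub-critical statement follows from $L_1(G(n_r,p))=O(\log n)$ for a sub-critical Erd\H{o}s--R\'enyi graph together with the fact that the largest component is red with probability $1/q$.
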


\noindent
More accurate versions of this result can readily be obtained by combining the techniques from~\cite{BGJ} with stronger error bounds for random graph properties \cite{J}. We will use the following version in our proofs which we defer to Section \ref{subsection:drift}.

\begin{cor}\label{cor:rc-sub} If $\lambda > q$, then $|L_1(G)-\gcrc n| = O(n^{8/9})$ w.h.p.
\end{cor}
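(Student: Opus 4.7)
The strategy is to refine the argument of~\cite{BGJ} establishing Lemma~\ref{rc-sub}, substituting the sharper random-graph estimates of Subsection~\ref{subsection:random-graphs} for the cruder bounds invoked there. The proof in~\cite{BGJ} decomposes the random-cluster partition function by conditioning on the vertex set $S$ of the giant component; writing $|S|=m$, the marginal of $|S|$ can be expressed as
\begin{equation*}
\mu_{p,q}(|S|=m)\;\propto\;\binom{n}{m}\,q\,P_{G(m,p)}(\text{connected})\,(1-p)^{m(n-m)}\,Z^{\text{sub}}_{p,q}(n-m),
\end{equation*}
where $Z^{\text{sub}}_{p,q}(k)$ is the random-cluster partition function on $k$ vertices restricted to configurations with no component of size comparable to $\theta_r n$, and the factor $(1-p)^{m(n-m)}$ reflects the absence of edges across the giant/non-giant boundary. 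For $\lambda>q$ the non-giant side of this decomposition is in an effectively sub-critical random-cluster regime, so a combination of Stirling's formula with the random-graph asymptotics identifies a unique maximizer $m^\star=\theta_r n+O(1)$ of the right-hand side, with quadratic decay of curvature $\Theta(1/n)$ around $m^\star$.

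The plan is to show that the approximation error incurred in evaluating $\log\mu_{p,q}(|S|=m)$ along this decomposition is small enough that the resulting concentration bound gives $|m-\theta_r n|=O(n^{8/9})$ with high probability. The connectedness probability $P_{G(m,p)}(\text{connected})$ is analyzed via Lemma~\ref{lemma:rg-sup}(ii) and Corollary~\ref{cor:rg-sup}, which supply sharp exponential tails for the giant of $G(m,\lambda/n)$ around $\beta(\lambda m/n)\cdot m$; in the weakly super-critical regime this controls the conditional measure given connectedness. The sub-critical partition function $Z^{\text{sub}}_{p,q}(n-m)$ is controlled by Lemmas~\ref{rg-sub-1}, \ref{sub-strong}, and~\ref{rg-suscept}, which supply the tail and second-moment control on $L_2$ and on $\sum_{i}L_i^2$ needed to compare $Z^{\text{sub}}_{p,q}(n-m)$ with the standard $G(n-m,\lambda/n)$ partition function up to multiplicative factors that are slowly varying in~$m$.

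The main technical obstacle is tracking the cumulative error terms with sufficient precision; in particular one must verify that the multiplicative errors in the approximations of $P_{G(m,p)}(\text{connected})$ and $Z^{\text{sub}}_{p,q}(n-m)$ translate into additive errors of order $o(1)$ in $\log\mu_{p,q}(|S|=m)$ across a window of width $n^{8/9}$ around $m^\star$. The exponent $8/9$ is expected to reflect the polynomial error of Lemma~\ref{rg-large} ($L_2=O(n^{11/12})$) when propagated through this window, combined with the $\Theta(1/n)$ curvature of the log-density; a tighter analysis exploiting the full Gaussian concentration of Lemma~\ref{lemma:rg-sup}(ii) should yield an optimal bound of $O(\sqrt{n}\,\polylog(n))$, but the weaker $n^{8/9}$ bound is sufficient for our subsequent applications.
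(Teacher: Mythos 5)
Your route---a saddle-point analysis of the law of the giant's size via the decomposition $\mu_{p,q}(|S|=m)\propto\binom{n}{m}\,q\,P_{G(m,p)}(\mathrm{connected})\,(1-p)^{m(n-m)}Z^{\mathrm{sub}}_{p,q}(n-m)$---is genuinely different from the paper's, but as written it has a real gap: none of the quantitative ingredients it needs are supplied by the lemmas you cite. The two factors driving the maximization, $P_{G(m,\lambda/n)}(\mathrm{connected})$ and $Z^{\mathrm{sub}}_{p,q}(n-m)$, are exponential-scale quantities (the connectivity probability of a sparse supercritical graph decays exponentially with a nontrivial rate function; the restricted partition function carries the weight $q^{c(A)}$ with $c(A)=\Theta(n)$), and your argument needs their logarithms, uniformly in $m$, to within a small fraction of the quadratic term $\Theta\bigl((m-m^\star)^2/n\bigr)$---that is, to precision $o(n^{7/9})$ at the edge of the window, not the $o(1)$ you posit, which is both unnecessary and unattainable. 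Lemma~\ref{lemma:rg-sup} and Corollary~\ref{cor:rg-sup} control only the size of the largest component of a supercritical random graph and say nothing about connectivity probabilities; Lemmas~\ref{rg-sub-1}, \ref{sub-strong} and~\ref{rg-suscept} bound component sizes and susceptibility but do not give asymptotics for $Z^{\mathrm{sub}}_{p,q}(n-m)$ as a function of $m$. Identifying the maximizer $m^\star=\theta_r n+O(1)$ and the curvature $\Theta(1/n)$ is precisely the variational analysis of \cite{BGJ}, which you assert rather than carry out with the quantitative error control required; and the claim that the exponent $8/9$ ``reflects'' Lemma~\ref{rg-large} is spurious, since that lemma is a probability bound on $L_2$ and contributes no additive term to any log-density.

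For comparison, the paper's proof never touches partition functions. It starts from $L_2(G)<n^{3/4}$ w.h.p.\ (Lemma~3.2 of \cite{BGJ}) and colors each component red independently with probability $1/q$; by Lemma~3.1 of \cite{BGJ}, conditioned on the red vertex set the red subgraph is an ordinary $G(n_r,p)$ random graph. On the event that $L_1(G)=\theta n$ and ${\cal L}(G)$ is red, the largest red component equals $\theta n$ exactly, while $n_r$ concentrates within $\xi=\sqrt{n^{7/4}\log n}$ of $\bigl(\theta+(1-\theta)/q\bigr)n$, so by Lemma~\ref{lemma:rg-sup} the largest red component lies within $O(\xi)$ of $\phi(\theta)n$ with probability $1-O(n^{-2})$. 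Since $\lambda>q=\lambda_S$, Lemma~\ref{drift-bounds}(iii) gives $|\theta-\phi(\theta)|\ge\delta|\theta-\theta_r|$, so every $\theta$ with $|\theta-\theta_r|n>n^{8/9}\gg\xi$ forces $\Pr[L_1(G)=\theta n]=O(n^{-2})$, and a union bound over the at most $n$ integer values of $\theta n$ finishes. This self-consistency argument via the coloring coupling and the drift function is far lighter than your proposal, which would in effect require re-deriving the large-deviation estimates of \cite{BGJ} at a finer scale.
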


%\pagebreak
\subsection{Drift function} \label{subsection:drift}
%\par\medskip\noindent
%{\bf Drift function.}\ \

As indicated in the introduction, our analysis relies heavily on understanding
the evolution of the size of the largest component under the CM
dynamics. To this end, for fixed $\lambda$ and $q$ let $\phi(\vf)$ be the largest $x > 0$ satisfying the equation 
\begin{eqnarray}
e^{-\lambda x} = 1 - \frac{q x}{1+(q-1)\vf}. \label{phi-definition}
\end{eqnarray}
Note this equation corresponds to (\ref{rg-root-eq}) for a $G\left(\left(\vf+\frac{1-\vf}{q}\right)n,\lambda/n\right)$ random graph, so
\begin{equation}
\phi(\vf) = \gc\left(\frac{\lambda(1+(q-1)\vf)}{q}\right).\label{phi:definition}
\end{equation}
Thus, $\phi$ is well-defined when $\lambda(1+(q-1)\vf) > q$. In particular, $\phi$ is well-defined in the interval $(\theta_{{\rm min}},1]$,
%where \[\theta_{\rm min} = \max\left\{\frac{q-\lambda}{\lambda(q-1)},0\right\}.\]
where $\theta_{\rm min} = \max\left\{(q-\lambda)/\lambda(q-1),0\right\}$. 

We will see in Sections \ref{upper-sub} and \ref{section-sup} that for a configuration with a unique ``large" component of size $\theta n$, the expected ``drift" in the size of the largest component will be determined by the sign of the function $f(\vf) = \vf - \phi(\vf)$: $f(\vf) > 0$ corresponds to a negative drift and $f(\vf) < 0$ to a positive drift. Thus, let 
\[\lambda_s = \max\{\lambda \le \lambda_c: f(\vf) > 0~~\forall\vf\in(\theta_{\rm min},1]\}~{\rm and,}\]
\[\lambda_S = \min \{\lambda \ge \lambda_c: f(\vf)(\vf-\gcrc) > 0~~\forall \vf \in (\theta_{\rm min},1]\}.\]
Intuitively, $\lambda_s$ and $\lambda_S$ are the maximum and minimum values, respectively, of $\lambda$ for which the drift in the size of the largest component is always in the required direction (i.e., towards 0 in the sub-critical case and towards $\gcrc n$ in the super-critical case).  

The following lemma, which we will prove shortly, reveals basic information about the quantities $\lambda_s$ and $\lambda_S$.
\begin{lemma} \label{ls} For $q \le 2,$ $\lambda_s = \lambda_c = \lambda_S = q$; and for $q > 2$, 
	$\lambda_s < \lambda_c < \lambda_S = q$.
\end{lemma}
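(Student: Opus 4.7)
The plan is to reduce the bivariate condition on $f(\vf) = \vf - \phi(\vf)$ to a single-variable analysis. For each fixed $\vf > \vf_{\rm min}$, the function $\veq \mapsto F_\vf(\veq) := e^{-\lambda \veq} - 1 + q\veq/(1+(q-1)\vf)$ is convex with $F_\vf(0) = 0$, $F_\vf'(0) < 0$, and $F_\vf(\veq) \to +\infty$; hence it has a unique positive root at $\veq = \phi(\vf)$, with $F_\vf(\veq) > 0$ iff $\veq > \phi(\vf)$. Specializing to $\veq = \vf$ yields $\sgn f(\vf) = \sgn h(\vf)$ for
\[ h(\vf) := e^{-\lambda\vf} - 1 + \frac{q\vf}{1+(q-1)\vf}, \]
reducing the problem to understanding the zero set of $h$ on $(\vf_{\rm min}, 1]$.

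Next, I would parametrize these zeros: solving $h(\vf) = 0$ for $\lambda$ gives
\[ \Lambda(\vf) := \vf^{-1}\log\frac{1+(q-1)\vf}{1-\vf}, \qquad \vf \in (0,1), \]
with $\Lambda(0^+) = q$, $\Lambda(1^-) = \infty$, and Taylor expansion $\Lambda(\vf) = q + \tfrac{1}{2}q(2-q)\vf + O(\vf^2)$. Setting $g(\vf) := \log[(1+(q-1)\vf)/(1-\vf)]$, a direct calculation shows that $\sgn g''(\vf) = \sgn\!\bigl[(2-q) + 2(q-1)\vf\bigr]$, from which it follows that $\Lambda$ is strictly increasing on $(0,1)$ with range $(q,\infty)$ when $1<q\le 2$, while for $q>2$ it has a unique interior minimum at some $\vf^*\in(0,1)$ with $\lambda^* := \Lambda(\vf^*) < q$. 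The identity $h(\vf_{\rm min}) = e^{-u} - 1 + u > 0$ for $u := (q-\lambda)/(q-1)$ further shows that $\vf_{\rm min}$ itself is never a zero of $h$.

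The lemma then falls out by case analysis. For $1<q\le 2$: $\Lambda > q$ on $(0,1)$ gives $h > 0$ on $(\vf_{\rm min}, 1]$ for every $\lambda \le q = \lambda_c$, hence $\ls = q$; for $\lambda \ge q$, $\Lambda$ bijects $(0,1)$ onto $(q,\infty)$, so $h$ has a unique zero $\gcrc$ and flips sign from $-$ to $+$ there, hence $\lS = q$. For $q > 2$: in the window $\lambda \in (\lambda^*, q)$, $h$ has two zeros $\vf_1 < \vf_2$ in $(0,1)$, both lying in $(\vf_{\rm min}, 1]$ (by $h(\vf_{\rm min}) > 0$ and continuity from $\lambda = q$, where $\vf_1 \to 0$), so the unwanted sign change at $\vf_1$ breaks both defining conditions; at $\lambda = q$, $\vf_1$ collapses into $\vf_{\rm min} = 0$ leaving only $\gcrc = \vf_2$, restoring the $\lS$-condition. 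Hence $\ls = \lambda^*$ and $\lS = q$. Both strict inequalities $\lambda^* < \lambda_c < q$ reduce to the same elementary fact: with $t := q-1 > 1$,
\[ t - \tfrac{1}{t} > 2\log t, \]
which follows from $\tfrac{d}{dt}(t - 1/t - 2\log t) = (t-1)^2/t^2 > 0$. Rearranging $\lambda_c = 2(q-1)\log(q-1)/(q-2)$ shows that $\lambda_c < q$ is equivalent to this inequality; verifying via (\ref{rc-gc-1}) that $\gcrc(\lambda_c,q) = (q-2)/(q-1)$ and then checking $\Lambda'((q-2)/(q-1)) > 0$ (again the same inequality) places $\gcrc(\lambda_c,q)$ strictly to the right of $\vf^*$, so $\lambda_c = \Lambda(\gcrc(\lambda_c,q)) > \Lambda(\vf^*) = \lambda^*$.

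The main obstacle is the $q > 2$ case: showing that both zeros of $h$ sit strictly inside $(\vf_{\rm min}, 1]$ throughout the entire window $(\lambda^*, q)$---not just near the endpoint $\lambda = q$---and that the clean collapse $\vf_1 \to \vf_{\rm min}$ at $\lambda = q$ pins $\lS$ exactly at $q$. This works because $\Lambda(0^+) = q$ coincides with the threshold above which $\vf_{\rm min}$ vanishes.
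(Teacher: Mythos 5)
Your argument is correct, but it takes a genuinely different route from the paper's. The paper does not analyze the zero set of $f$ directly at all: it imports the root structure of $f$ on $(\theta_{\rm min},1]$ wholesale from Lemma 2.5 of \cite{BGJ} (stated as Fact \ref{f-roots}, including the existence of $\lambda_{\rm min}<\lambda_c$), and then just combines that with the sign/convexity information of Fact \ref{drift-f-facts} ($f(1)>0$, $\sgn f(\theta_{\rm min}^+)=\sgn(q-\lambda)$). You instead re-derive that root structure from scratch: the reduction $\sgn f(\vf)=\sgn h(\vf)$ via convexity of $F_\vf$, the reparametrization $\lambda=\Lambda(\vf)=\vf^{-1}g(\vf)$, the sign of $g''$, and the resulting monotone ($q\le 2$) versus unimodal ($q>2$) shape of $\Lambda$ (noting that $\sgn\Lambda'$ is the sign of $N(\vf)=\vf g'(\vf)-g(\vf)$ with $N'=\vf g''$), which is in effect a self-contained proof of the cited fact; likewise your verification of $\lambda^*<\lambda_c<q$ via the single inequality $t-1/t>2\log t$ at $\vf=(q-2)/(q-1)$ replaces what the paper simply quotes. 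What the citation buys the paper is precisely the point you flag as the obstacle: that for $q>2$ and $\lambda\in(\lambda^*,q)$ both zeros of $h$ lie in $(\theta_{\rm min},1]$ (Fact \ref{f-roots} is stated for roots \emph{in} that interval). Your continuity-in-$\lambda$ argument does close this, but the anchor ``continuity from $\lambda=q$, where $\vf_1\to 0$'' needs one more line, because $\theta_{\rm min}\to 0$ as well when $\lambda\to q^-$: comparing rates, $\vf_1\sim 2(q-\lambda)/(q(q-2))$ while $\theta_{\rm min}\sim(q-\lambda)/(q(q-1))$, and $2(q-1)>q-2$, so $\theta_{\rm min}<\vf_1$ near $q$; together with $h(\theta_{\rm min})>0$ (so $\theta_{\rm min}$ can never cross $\vf_1$ or $\vf_2$) and continuity of $\vf_1(\lambda),\vf_2(\lambda),\theta_{\rm min}(\lambda)$ on $(\lambda^*,q)$, this pins $\theta_{\rm min}<\vf_1$ throughout the window. (For the $\lS$ part alone one can shortcut: there $\lambda\ge\lambda_c\ge 2$, which gives $\theta_{\rm min}<(q-2)/(2(q-1))<\vf^*<\vf_2$ directly.) With that one computation made explicit, your proof is complete and, unlike the paper's, does not rely on \cite{BGJ}.
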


\noindent
For integer $q\ge 3$, $\lambda_s$ corresponds to the threshold~$\beta_s$ in the mean-field $q$-state Potts model at which the local
(Glauber) dynamics undergoes an exponential slowdown \cite{CDLLPS}. In fact, a change of variables reveals that $\lambda_s = 2\beta_s$ for the specific mean-field Potts model normalization in \cite{CDLLPS}.

In Figure \ref{fig:f} we sketch $f$ in its only two qualitatively different regimes: $q \le 2$ and $q > 2$. The following lemma provides bounds for the drift of the size of the largest component under CM steps.

\begin{figure}[t!]
	\begin{center}
		\includegraphics[page=1,clip,trim=85 562 80 70]{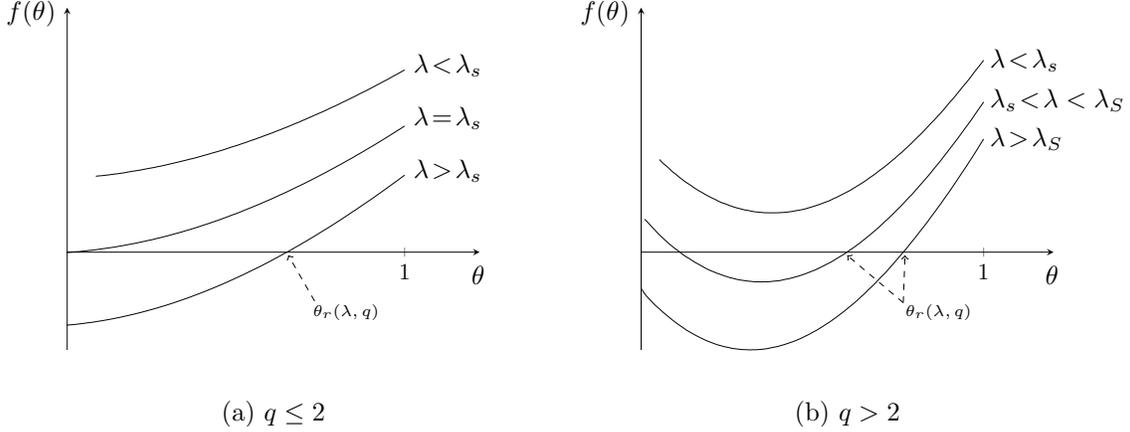}
		\caption{Sketch of the function $f$.}
		\label{fig:f}
	\end{center}
\end{figure}

%\begin{center}
%\end{center}

\begin{lemma} \label{drift-bounds} For all $\vf \in (\theta_{\rm min},1]$,
	\begin{enumerate}[(i)] 
		\item If $\lambda < \lambda_s$, there exists a constant $\delta > 0$ such that $f(\vf) \ge \delta$.
		\item When $\lambda > \lambda_S$, if $\vf > \gcrc$, then $\vf \ge \phi(\vf) \ge \gcrc$ and if $\vf < \gcrc,$ then $\vf \le \phi(\vf) \le \gcrc.$ 
		\item If $\lambda > \lambda_S$, there exists a constant $\delta \in (0,1)$ such that $\delta |\vf-\gcrc| \le |\phi(\vf)-\vf|$.
	\end{enumerate} 
\end{lemma}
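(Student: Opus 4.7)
The plan is to handle the three parts in turn using a few basic properties of $\phi$: (a) $\phi$ is continuous on $(\theta_{\rm min},1]$ and extends continuously to $\theta_{\rm min}$ with value $0$ when $\theta_{\rm min}>0$; (b) $\phi$ is strictly increasing in $\vf$, by implicit differentiation of (\ref{phi-definition}); (c) $\phi(\gcrc)=\gcrc$. For (c), note that at $\vf=\gcrc$ equation (\ref{phi-definition}) coincides with (\ref{rc-gc-1}), so $x=\gcrc$ is a positive root of the defining equation for $\phi(\gcrc)$; convexity of the map $x\mapsto e^{-\lambda x}-1+qx/(1+(q-1)\gcrc)$ then shows it is the largest positive root.

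Part (i) follows from (a) and compactness: Lemma \ref{ls} gives $\ls\le q$, so $\lambda<\ls$ forces $\theta_{\rm min}=(q-\lambda)/(\lambda(q-1))>0$, and $f$ extends continuously to $[\theta_{\rm min},1]$ with $f(\theta_{\rm min})=\theta_{\rm min}>0$; the definition of $\ls$ makes $f$ strictly positive on this compact interval, so $\delta:=\min f>0$ suffices. Part (ii) follows from (b) and (c): for $\vf\ge\gcrc$, monotonicity gives $\phi(\vf)\ge\gcrc$, and the definition of $\lS$ yields $f(\vf)>0$, so $\vf\ge\phi(\vf)\ge\gcrc$; the case $\vf<\gcrc$ is symmetric.

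For part (iii), Lemma \ref{ls} gives $\lS=q$, so the hypothesis $\lambda>\lS$ forces $\theta_{\rm min}=0$ and $\phi$ is continuous on $[0,1]$ with $\phi(0)>0$. Define $h(\vf):=|\phi(\vf)-\vf|/|\vf-\gcrc|$ for $\vf\ne\gcrc$. Part (ii) gives $0<h(\vf)\le 1$; the endpoint values $h(0^+)=\phi(0)/\gcrc>0$ and $h(1)=(1-\phi(1))/(1-\gcrc)>0$ are immediate; and, provided $\phi'(\gcrc)<1$, $h$ extends continuously to $\gcrc$ with positive value $1-\phi'(\gcrc)$. Compactness of $[0,1]$ then gives a uniform lower bound $\delta:=\min h>0$, and $\delta<1$ by the strict monotonicity of $\phi$.

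The main obstacle is the transversality $\phi'(\gcrc)<1$ for $\lambda>\lS=q$. My plan is to recast this via the function $g(x):=e^{-\lambda x}-1+qx/(1+(q-1)x)$, whose positive roots are precisely the fixed points of $\phi$. A direct computation, using the identity $e^{-\lambda\gcrc}=(1-\gcrc)/(1+(q-1)\gcrc)$ obtained from (\ref{rc-gc-1}), gives $g'(\gcrc)=[q-\lambda(1-\gcrc)(1+(q-1)\gcrc)]/(1+(q-1)\gcrc)^2$, and implicit differentiation of (\ref{phi-definition}) at $\vf=\gcrc$ shows that $\phi'(\gcrc)<1$ is equivalent to $g'(\gcrc)>0$. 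To conclude, $g$ is convex in $x$, with $g(0)=0$, $g'(0)=q-\lambda<0$, and $g(1)=e^{-\lambda}>0$: convexity forces $\gcrc$ to be the unique additional positive root of $g$ and hence a simple one, so $g'(\gcrc)>0$ as required.
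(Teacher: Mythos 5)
Parts (i) and (ii) of your argument are essentially the paper's own proof (positivity of $f(\theta_{\rm min}^+)$ plus continuity and compactness for (i); monotonicity of $\phi$ together with $\phi(\gcrc)=\gcrc$ and the definition of $\lS$ for (ii)), and they are fine. For part (iii) you take a different route from the paper --- a compactness argument for the ratio $h(\vf)=|\phi(\vf)-\vf|/|\vf-\gcrc|$ --- and this route hinges on the transversality $\phi'(\gcrc)<1$, equivalently $g'(\gcrc)>0$ for your $g(x)=e^{-\lambda x}-1+qx/(1+(q-1)x)$. Your justification of this step is where the gap lies: the claim that this $g$ is convex in $x$ is false in general. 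Indeed $g''(x)=\lambda^2 e^{-\lambda x}-2q(q-1)/(1+(q-1)x)^3$, so $g''(0)=\lambda^2-2q(q-1)$, which is negative whenever $\lambda<\sqrt{2q(q-1)}$; e.g.\ for $q=3$, $\lambda=3.1>q=\lS$ one has $g''(0)=9.61-12<0$. (Your other $g$-facts, $g(0)=0$, $g'(0)=q-\lambda<0$, $g(1)=e^{-\lambda}>0$, and the identity $g'(\gcrc)=[q-\lambda(1-\gcrc)(1+(q-1)\gcrc)]/(1+(q-1)\gcrc)^2$, are correct.) Moreover, the weaker facts you could still salvage --- uniqueness of the positive root of $g$ (Fact \ref{f-roots}) together with the sign pattern $g<0$ near $0^+$ and $g(1)>0$ --- only give $g'(\gcrc)\ge 0$: a sign change at $\gcrc$ does not exclude an odd-order degenerate root, so simplicity, and hence $\phi'(\gcrc)<1$, does not follow from what you have written. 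Note also that the positivity $h(\vf)>0$ for $\vf\neq\gcrc$ itself requires that $\gcrc$ be the \emph{only} fixed point of $\phi$ in $(0,1]$, which you should cite explicitly (Fact \ref{f-roots}(ii) for $\lambda>q$) rather than obtain from the faulty convexity claim.

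The needed transversality is true, and the clean way to get it is the one the paper uses: by Fact \ref{drift-phi-prop}(ii) the map $\phi$ is strictly concave on $(\theta_{\rm min},1]$ (proved there via a different auxiliary function whose second derivative is positive), hence $f=\mathrm{id}-\phi$ is strictly convex; since $f(\theta_{\rm min}^+)=-\phi(0)<0$ and $f(\gcrc)=0$, convexity forces $f'(\gcrc)>0$, i.e.\ $\phi'(\gcrc)<1$. With that substitution your compactness argument for (iii) does go through. By way of comparison, the paper avoids the ratio-plus-compactness step altogether: it compares the convex function $f$ globally with the secant line through $(0,f(\theta_{\rm min}^+))$ and $(\gcrc,0)$ and the tangent at $\gcrc$, obtaining the linear bound directly with the explicit constant $\delta=M/2$, $M=\min\{f'(\gcrc),-f(\theta_{\rm min}^+)/\gcrc\}$, and without needing uniqueness of the fixed point as an input.
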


\noindent
Before proving Lemmas \ref{ls} and \ref{drift-bounds} we establish the following useful facts about the functions $\phi$ and $f$ which in most cases follow easily from their definitions. 

\begin{fact} \label{drift-phi-prop}\
	\begin{enumerate}[(i)]
		\item $\vf^* \in (\theta_{\rm min},1]$ is a fixed point of $\phi$ if and only if $\vf^*$ is a solution of (\ref{rc-gc-1}).
		%\item $\phi(\gcrc) = \gcrc$ and $\phi(\theta_{\rm min}) = 0.$
		\item $\phi$ is continuous, differentiable, strictly increasing and strictly concave in $(\theta_{\rm min},1]$.
		\item $\phi'(\vf) > \frac{q-1}{q}$ for all $\vf \in (\theta_{\rm min},1]$.
	\end{enumerate} 
\end{fact}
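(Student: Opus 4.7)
The cleanest route is to invert the defining equation (\ref{phi-definition}) and express $\vf$ as an explicit function of $\phi$. Solving for $\vf$ in that equation gives
\begin{equation*}
\vf \;=\; h(\phi) \;:=\; \frac{1}{q-1}\left(\frac{q\phi}{1-e^{-\lambda\phi}}-1\right),
\end{equation*}
which is real-analytic for $\phi>0$. From here part (i) is immediate: a fixed point $\vf^{*} = \phi(\vf^{*})$ just means substituting $x = \vf^{*} = \phi(\vf^{*})$ into (\ref{phi-definition}), which reproduces (\ref{rc-gc-1}) verbatim. So the bulk of the work is in (ii) and (iii).

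For part (ii), I would establish that $h$ is strictly increasing and strictly convex on its domain; then $\phi = h^{-1}$ is automatically strictly increasing, strictly concave, and smooth, with differentiability supplied by the inverse function theorem. Strict monotonicity of $h$ reduces, after differentiating, to the inequality $1 - (1+u)e^{-u} > 0$ for $u=\lambda\phi>0$, which holds because the LHS vanishes at $u=0$ and has derivative $ue^{-u}>0$. For strict convexity of $h$, rather than grind out $h''$ directly, I would use the factorization (\ref{phi:definition}), $\phi(\vf) = \gc\bigl(\lambda(1+(q-1)\vf)/q\bigr)$, and reduce to showing that $\gc$ is strictly concave on $(1,\infty)$. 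The natural inverse of $\gc$ is $d(y) = -\log(1-y)/y = \sum_{j\ge 0} y^{j}/(j+1)$; since the coefficients from $j=2$ onward are strictly positive, $d''(y)>0$ on $(0,1)$, and since $d'(y)>0$ on the same interval, $\gc$ is strictly increasing and strictly concave on $(1,\infty)$. Composing with the affine increasing map $\vf\mapsto\lambda(1+(q-1)\vf)/q$ gives the same properties for $\phi$ on $(\theta_{\rm min},1]$.

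Part (iii) then follows from a short explicit computation on the inverse. At a point $\vf$ with $\phi = \phi(\vf)$ and $u=\lambda\phi$, we have $\phi'(\vf) = 1/h'(\phi)$, and differentiating gives
\begin{equation*}
h'(\phi) \;=\; \frac{q}{q-1}\cdot\frac{1-(1+u)e^{-u}}{(1-e^{-u})^{2}}.
\end{equation*}
Thus the bound $\phi'(\vf) > (q-1)/q$ is equivalent to $1-(1+u)e^{-u} < (1-e^{-u})^{2}$, which after expansion collapses to $1-u < e^{-u}$ for $u>0$, a standard strict convexity inequality for the exponential.

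The main obstacle in this plan is the strict concavity claim in (ii); monotonicity, smoothness and the derivative bound of (iii) are essentially routine once $h$ is available, but computing $h''$ directly is unpleasant. Passing through $\gc$ and the power series $d(y)=\sum y^{j}/(j+1)$ bypasses this, turning the concavity statement into the positivity of explicit coefficients, which is trivial to verify.
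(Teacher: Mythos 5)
Your proposal is correct, and it shares its backbone with the paper's proof: your $h(\phi)=\frac{1}{q-1}\bigl(\frac{q\phi}{1-e^{-\lambda\phi}}-1\bigr)$ is exactly the paper's auxiliary function $g$ plus the identity (the paper writes $g(\phi(\vf))=\vf-\phi(\vf)$), your derivative formula for $h'$ is the reciprocal of the paper's implicit-differentiation formula for $\phi'$, and in both arguments the bound $\phi'>\frac{q-1}{q}$ collapses to the same elementary inequality $e^{-u}>1-u$. Where you genuinely diverge is strict concavity: the paper differentiates once more, writing $\phi''(\vf)=-g''(\phi(\vf))\phi'(\vf)(1+g'(\phi(\vf)))^{-2}$ and asserting $g''>0$ by ``a straightforward calculation,'' whereas you route through the factorization $\phi(\vf)=\gc\bigl(\lambda(1+(q-1)\vf)/q\bigr)$ and observe that the inverse of $\gc$ is $d(y)=-\log(1-y)/y=\sum_{j\ge0}y^j/(j+1)$, whose strict convexity is visible from the positivity of the series coefficients; this replaces an unpleasant second-derivative computation with an inspection, at the mild cost of invoking the standard fact that the inverse of an increasing strictly convex bijection is increasing and strictly concave (plus the affine pre-composition). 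One small tightening: your claim that part (i) is ``immediate'' really only covers the direction fixed point $\Rightarrow$ solution of (\ref{rc-gc-1}); the converse needs to rule out that the \emph{largest} positive root of (\ref{phi-definition}) at $\vf=\vf^*$ exceeds $\vf^*$, which is exactly where the paper invokes injectivity of $x\mapsto x/(1-e^{-\lambda x})$. You do have this: strict monotonicity of $h$ (which you prove for (ii)) gives uniqueness of the positive root and hence $\phi=h^{-1}$, so $h(\vf^*)=\vf^*$ forces $\phi(\vf^*)=\vf^*$ — just say so explicitly rather than calling it immediate.
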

\begin{proof} Obviously any fixed point of $\phi$ is also a solution of (\ref{rc-gc-1}). For the other direction, consider the injective function $h(x) = \frac{x}{1-e^{-\lambda x}};$ if $\vf^*$ is a root of equation (\ref{rc-gc-1}), then $h(\vf^*) = h(\phi(\vf^*))$ and $\phi(\vf^*)=\vf^*$. %Part $(ii)$ follows promptly from equation (\ref{rc-gc-1}) and the definitions of $\phi$ and $\theta_{\rm min}.$
	
	%Since $\gcrc$ is a root of equation (\ref{rc-gc-1}), part $(i)$ implies $\phi(\gcrc) = \gcrc.$ Observe that 
	%\[e^{-\lambda \phi(\theta_{\rm min})} \ge 1 - \lambda \phi(\theta_{\rm min})\]
	%and thus $\phi(\theta_{\rm min}) = 0$ since $\lambda > 0.$
	
	By differentiating both sides of (\ref{phi-definition}),
	\[\phi'(\vf) = \frac{q-1}{q}\cdot\frac{(1-e^{-\lambda\phi(\vf)})^2}{1-e^{-\lambda\phi(\vf)}-\lambda\phi(\vf)e^{-\lambda\phi(\vf)}}\]
	which implies that $\phi$ is differentiable and continuous. Since $e^{-\lambda \phi(\vf)} > 1 - \lambda \phi(\vf)$, then $\phi'(\vf) > \frac{q-1}{q}$ and $\phi$ is strictly increasing. Finally, consider the function
	\begin{eqnarray}
	g(x) = \frac{qx}{(q-1)(1-e^{-\lambda x })} - \frac{1}{q-1} - x \label{drift-function-h}. \nonumber
	\end{eqnarray}
	By solving for $\vf$ in (\ref{phi-definition}), observe that $g(\phi(\vf)) = \vf - \phi(\vf)$ for $\vf \in (\theta_{\rm min},1]$. Therefore,
	\[\phi''(\vf) = -g''(\phi(\vf))\phi'(\vf)(1+g'(\phi(\vf)))^{-2}\]
	and a straightforward calculation shows that $g'' > 0$ in $(0
	,1]$. Consequently, $\phi$ is strictly concave in $(\theta_{\rm min},1]$.
\end{proof}

\begin{fact}\label{drift-f-facts} \
	\begin{enumerate}[(i)]
		\item $f$ is continuous, differentiable and strictly convex in $(\theta_{\rm min},1]$.
		\item $f(\gcrc) = 0$, $f(1)>0$ and $f'(\vf) < 1/q$ for all $\vf \in (\theta_{\rm min},1]$.
		\item Let $f(\theta_{\rm min}^+) = \lim_{\vf \rightarrow \theta_{\rm min}} f(\vf);$ then $\sgn(f(\theta_{\rm min}^+)) = \sgn(q-\lambda)$.
	\end{enumerate}
\end{fact}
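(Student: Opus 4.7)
The plan is to verify the three parts of Fact~\ref{drift-f-facts} in turn, leaning heavily on Fact~\ref{drift-phi-prop} (structural properties of $\phi$) together with the identity~(\ref{phi:definition}) and the definition of $\theta_{\rm min}$.

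For part (i), I would observe that $f(\vf) = \vf - \phi(\vf)$ inherits continuity and differentiability on $(\theta_{\rm min},1]$ directly from $\phi$. Strict convexity of $f$ follows from strict concavity of $\phi$ in Fact~\ref{drift-phi-prop}(ii): adding the linear function $\vf$ to the strictly convex function $-\phi$ preserves strict convexity.

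For part (ii), I would argue as follows. The identity $f(\gcrc) = 0$ is immediate from Fact~\ref{drift-phi-prop}(i), since $\gcrc$ satisfies (\ref{rc-gc-1}) and hence is a fixed point of $\phi$. For the inequality $f(1) > 0$, I would use (\ref{phi:definition}) to write $\phi(1) = \gc(\lambda)$, and observe that $\gc(\lambda)$ is the largest positive root of $e^{-\lambda x} = 1-x$; since $e^{-\lambda} > 0 = 1-1$, this root must lie strictly below $1$, so $f(1) = 1-\gc(\lambda) > 0$. The derivative bound is just an algebraic restatement of Fact~\ref{drift-phi-prop}(iii): $f'(\vf) = 1-\phi'(\vf) < 1-\tfrac{q-1}{q} = \tfrac{1}{q}$.

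For part (iii), I would split on the sign of $q-\lambda$ using $\theta_{\rm min} = \max\{(q-\lambda)/(\lambda(q-1)),0\}$. When $\lambda > q$, we have $\theta_{\rm min} = 0$, and (\ref{phi:definition}) gives $\phi(0^+) = \gc(\lambda/q)$, which is strictly positive since $\lambda/q > 1$; thus $f(0^+) < 0$, matching $\sgn(q-\lambda) = -1$. When $\lambda < q$, the value $\theta_{\rm min} > 0$ is chosen precisely so that $\lambda(1+(q-1)\theta_{\rm min})/q = 1$, the random-graph criticality threshold; hence $\phi(\theta_{\rm min}^+) = \gc(1^+) = 0$ (the giant-component fraction vanishes at criticality, directly from the defining equation~(\ref{rg-root-eq})), giving $f(\theta_{\rm min}^+) = \theta_{\rm min} > 0$ and matching $\sgn(q-\lambda) = +1$. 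The borderline case $\lambda = q$ is analogous: $\theta_{\rm min} = 0$ and $\phi(0^+) = \gc(1) = 0$, so $f(0^+) = 0 = \sgn(q-\lambda)$.

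No step here is a substantive obstacle; the entire fact is a direct consequence of Fact~\ref{drift-phi-prop} combined with the definition of $\theta_{\rm min}$. The only point requiring mild care is taking the limit $\phi(\theta_{\rm min}^+)$ in the subcritical case, which reduces to continuity of $\gc(d)$ as $d\downarrow 1$ and the fact that $\gc$ vanishes at the random-graph critical threshold.
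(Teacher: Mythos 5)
Your proposal is correct and follows essentially the same route as the paper: parts (i) and (ii) are read off directly from Fact~\ref{drift-phi-prop} and (\ref{phi:definition}), and part (iii) is handled by the same case split on $\lambda$ versus $q$, using $\phi(0)=\gc(\lambda/q)>0$ when $\lambda>q$ and continuity of $\gc$ at the criticality threshold ($\phi(\theta_{\rm min}^+)=0$) when $\lambda<q$. Your extra treatment of the borderline $\lambda=q$ is a small completeness bonus the paper's proof omits, but it does not change the argument.
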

\begin{proof} Parts $(i)$ and $(ii)$ follow immediately from Fact \ref{drift-phi-prop}. For Part $(iii)$, observe that when $\lambda > q$, $\theta_{\rm min} = 0$ and the function $\phi$ is defined at 0; thus, $f(\theta_{\rm min}^+) = -\phi(0) < 0$. When $\lambda < q$, $\theta_{\rm min} = (q-\lambda)/\lambda(q-1)$ and by continuity, $\lim_{\vf \rightarrow \theta_{\rm min}} \phi(\vf) = 0$; hence, $f(\theta_{\rm min}^+) = \theta_{\rm min} > 0$.
\end{proof}

\noindent
Observe that if $\vf^*$ is a zero of $f$, then $\vf^*$ is a fixed point of $\phi$ and consequently a root of equation (\ref{rc-gc-1}).
Lemma 2.5 from \cite{BGJ} dissects the roots of equation (\ref{rc-gc-1}) and hence identifies the roots of $f$ in $(\theta_{\rm min},1]$.

\begin{fact}\label{f-roots}\ The roots of the function $f$ in $(\theta_{\rm min},1]$ are given as follows: 
	\begin{enumerate}[(i)]
		\item When $q \le 2$: if $\lambda \le \lambda_c$, $f$ has no positive roots and if $\lambda > \lambda_c$, $f$ has a unique positive root.
		\item When $q > 2$, there exists $\lambda_{\rm min} < \lambda_c$ such that: if $\lambda < \lambda_{\rm min}$, $f$ has no positive roots; if $\lambda_{\rm min} < \lambda < q$, $f$ has exactly two positive roots; and if $\lambda > q$, $f$ has a unique positive root.
	\end{enumerate}
\end{fact}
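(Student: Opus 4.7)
The plan is to translate the question about zeros of $f$ into a question about positive solutions of equation (\ref{rc-gc-1}). By the remark immediately preceding the statement together with Fact \ref{drift-phi-prop}(i), the zeros of $f$ in $(\theta_{\rm min},1]$ are in bijection with the positive solutions of (\ref{rc-gc-1}). The classification of these positive solutions is precisely the content of Lemma 2.5 of \cite{BGJ}; transcribing its conclusion through this bijection yields the stated classification of the zeros of $f$.

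For a more self-contained route, one can use the structural facts about $f$ already in hand. By Fact \ref{drift-f-facts}(i), $f$ is strictly convex on $(\theta_{\rm min},1]$, so it has at most two zeros there. Combined with $f(1)>0$ (Fact \ref{drift-f-facts}(ii)) and $\sgn f(\theta_{\rm min}^+)=\sgn(q-\lambda)$ (Fact \ref{drift-f-facts}(iii)), this disposes of the $\lambda>q$ case at once: the two boundary values have opposite signs, the intermediate value theorem supplies one zero, and strict convexity forbids a second. When $\lambda<q$, both boundary values are positive, so the number of zeros is either $0$ or $2$ according to the sign of $\min f$. Since $\phi(\vf)$ is pointwise increasing in $\lambda$ (by (\ref{phi:definition}) and the monotonicity of the giant-component function $\gc$), $f$ is pointwise decreasing in $\lambda$, and hence $\min f$ is monotonically decreasing in $\lambda$. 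This produces a single threshold $\lambda_{\rm min}$ separating the $0$-root and $2$-root regimes.

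The main obstacle in this self-contained approach is locating $\lambda_{\rm min}$ relative to $\lambda_c$ and $q$: one needs $\lambda_{\rm min}=\lambda_c=q$ when $q\le 2$, and $\lambda_{\rm min}<\lambda_c<q$ when $q>2$. This placement hinges on the precise description of the mean-field phase transition --- specifically, on whether the coexistence value $\lambda_c$ sits exactly at the birth of the second zero of $f$ or strictly above it --- which is exactly what \cite{BGJ} quantify. Since any truly self-contained derivation would essentially reprove that analysis, the cleanest proof is the one already signaled in the text: invoke the bijection above and cite Lemma 2.5 of \cite{BGJ}.
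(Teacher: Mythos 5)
Your proposal matches the paper's treatment: the paper gives no separate proof of this fact, but derives it exactly as you do, by noting (via Fact~\ref{drift-phi-prop}(i) and the remark preceding the statement) that zeros of $f$ in $(\theta_{\rm min},1]$ correspond to solutions of (\ref{rc-gc-1}) and then citing Lemma~2.5 of \cite{BGJ}. Your supplementary convexity sketch is a reasonable partial alternative, and your assessment of where it falls short (pinning down $\lambda_{\rm min}$ relative to $\lambda_c$ without redoing the analysis of \cite{BGJ}) is accurate.
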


\begin{proof}[{\bf Proof of Lemma \ref{ls}:}]
	Since $f(1) > 0$, by continuity $f$ is strictly positive in $(\theta_{\rm min},1]$ if and only if $f$ has no roots in $(\theta_{\rm min},1].$ When $q \le 2$, by Fact \ref{f-roots}, if $\lambda \le \lambda_c$ then $f$ has no roots in $(\theta_{\rm min},1]$, and if $\lambda > \lambda_c$ then $f$ has a unique root in $(\theta_{\rm min},1]$; thus, $\ls = \lambda_c = q$. When $q > 2,$ by Fact \ref{f-roots}, $\ls = \lambda_{\rm min} < \lambda_c.$
	
	If $\lambda > q$, then $f(\theta_{\rm min}^+) < 0$ and Fact \ref{f-roots} implies that $f$ has a unique root in $(\theta_{\rm min},1]$. Hence $f$ is negative in $(\theta_{\rm min},\gcrc)$ and positive in $(\gcrc,1]$ and then $\lambda_S \le q$. For $q \le 2$ this readily implies $\ls = \lambda_c = \lS = q$. For $q > 2$, if $q > \lambda > \lambda_c$, Fact \ref{f-roots} implies that $f$ has exactly two positive roots in $(\theta_{\rm min},1]$. Recall that $f(\gcrc) = 0$ and let $\vf^*$ be the other root of $f$ in $(\theta_{\rm min},1]$; by the definition of $\gcrc$, $\vf^* < \gcrc$. Moreover, $f(1) > 0$ and $f(\theta_{\rm min}^+) > 0$ since $q > \lambda$. Therefore, $f$ is positive in $(0,\vf^*) \cup (\gcrc,1]$ and negative in $(\vf^*,\gcrc)$. If $\theta < \vf^*$, then $f(\vf)(\vf-\gcrc) < 0$; thus, $\lambda_S = q$. 
\end{proof}

\begin{proof}[{\bf Proof of Lemma \ref{drift-bounds}:}] If $\lambda < \lambda_s$, then $f(\vf) > 0$ for all $\vf \in (\theta_{\rm min},1]$ by definition. Also, $f$ is continuous in $(\theta_{\rm min},1]$ and $f(\theta_{\rm min}^+) > 0;$ thus, $f$ must attain a minimum value $\delta > 0$ in $(\theta_{\rm min},1]$ which implies Part $(i)$. Part $(ii)$ follows from the definition of $\lS$ and the fact that $\phi$ is increasing in $(\theta_{\rm min},1]$.
	
	The function $f$ is  continuous, differentiable and convex in $(\theta_{\rm min},1]$, so it lies above all of its tangents. Observe that $f(\theta_{\rm min}^+) < 0$ when $\lambda > \lambda_S = q.$ Let $T$ be the line tangent to $f$ at $\gcrc$. Observe that $f'(\gcrc) > 0$ since $f$ is convex in $(\theta_{\rm min},1]$ and $f(\theta_{\rm min}^+) < 0$. Let $M = \min \{f'(\gcrc),-f(\theta_{\rm min}^+)/\gcrc\};$ by Fact \ref{drift-f-facts}, $f' < 1/q$ and so $M \in (0,1/q]$. Consider the line $S(\vf) = \frac{M}{2}(\vf-\gcrc)$ and the line $R$ going through the points $(0,f(\theta_{\rm min}^+))$ and $(\gcrc,0).$ The slope of $R$ is $-f(\theta_{\rm min}^+)/\gcrc,$ and the lines $S$, $R$ and $T$ intersect at $(\gcrc,0)$. Therefore, $S$ lies above $R$ in $(0,\gcrc)$ and below $T$ in $(\gcrc,1].$ By convexity, $f$ lies below $R$ in $(0,\gcrc)$ and above $T$ in $(\gcrc,1].$ Thus, $S$ lies above $f$ in $(0,\gcrc)$ and below $f$ in $(\gcrc,1].$ Therefore, if $\vf < \gcrc$ then $\frac{M}{2}(\vf-\gcrc) > \vf - \phi(\vf)$ and if $\vf > \gcrc$ then $\frac{M}{2}(\vf-\gcrc) < \vf - \phi(\vf)$. Part $(iii)$ then follows by taking $\delta = M/2$.
\end{proof}

\noindent
The following fact will also be helpful.

\begin{fact} \label{drift-bounds-1} If $\lambda > q$, then $\gcrc > 1 - q/\lambda.$ 
\end{fact}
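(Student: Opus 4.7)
Set $r := 1 - q/\lambda = (\lambda - q)/\lambda$, which is positive since $\lambda > q$. The plan is to show that $f(r) < 0$, and then invoke Lemma~\ref{drift-bounds}(ii) (applicable since $\lambda > q = \lambda_S$ by Lemma~\ref{ls}) to conclude $r < \gcrc$, as that lemma gives $f(\vf) \ge 0$ for every $\vf \ge \gcrc$, with equality only at $\vf = \gcrc$ (by Fact~\ref{drift-f-facts}(ii)).

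To prove $f(r) = r - \phi(r) < 0$ I would argue directly from the defining equation (\ref{phi-definition}). Introduce the auxiliary function
\[
F_r(x) = e^{-\lambda x} - 1 + \frac{qx}{1+(q-1)r},
\]
whose largest positive root is $\phi(r)$. Observe $F_r(0) = 0$, $F_r'' > 0$, and $F_r(x)\to\infty$ as $x\to\infty$; since $\phi$ is well-defined at $r$ (note $r > \theta_{\min} = 0$), $F_r$ dips below zero and then crosses back up through zero exactly at $\phi(r)$. Consequently, showing $F_r(r) < 0$ forces $r < \phi(r)$.

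Next I would plug $r = (\lambda - q)/\lambda$ into $F_r$ and compute. A short calculation gives $1+(q-1)r = q(\lambda - q + 1)/\lambda$, hence $qr/(1+(q-1)r) = (\lambda - q)/(\lambda - q + 1)$, and therefore
\[
F_r(r) = e^{-(\lambda - q)} - \frac{1}{\lambda - q + 1}.
\]
With $t := \lambda - q > 0$, this is $e^{-t} - 1/(1+t)$, which is negative by the classical inequality $e^{-t}(1+t) < 1$ for $t > 0$ (immediate from the fact that $g(t) := e^{-t}(1+t)$ satisfies $g(0) = 1$ and $g'(t) = -t e^{-t} < 0$).

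The combination of the convexity argument in step two and the elementary exponential inequality in step three yields $r < \phi(r)$, i.e.\ $f(r) < 0$, and then Lemma~\ref{drift-bounds}(ii) upgrades this to $r < \gcrc$. The only slightly delicate step is the last calculation, but it reduces to a one-line estimate, so no real obstacle is anticipated.
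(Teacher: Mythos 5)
Your proof is correct, but it takes a different route from the paper. The paper argues directly from the fixed-point equation (\ref{rc-gc-1}): solving for $\lambda$ gives $\lambda(1-\gcrc)=\frac{1-\gcrc}{\gcrc}\ln\frac{1+(q-1)\gcrc}{1-\gcrc}=:h(\gcrc)$, so the claim $\gcrc>1-q/\lambda$ reduces to showing $h(x)<q$ on $(0,1)$, which the paper gets from the (computational) facts that $h$ is decreasing and $h(x)\to q$ as $x\to 0$. You instead evaluate the drift function at the specific point $r=1-q/\lambda$: your algebra giving $F_r(r)=e^{-(\lambda-q)}-\frac{1}{\lambda-q+1}$ is right, the convexity argument pinning down the sign of $F_r$ on $(0,\phi(r))$ is sound (well-definedness of $\phi(r)$ plus strict convexity and $F_r(0)=0$ does force the ``dip'' shape), and the elementary inequality $e^{-t}(1+t)<1$ finishes $f(r)<0$; Lemma~\ref{drift-bounds}(ii) together with $f(\gcrc)=0$ then yields $r<\gcrc$. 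This is legitimate and non-circular, since Lemma~\ref{drift-bounds} (and Lemma~\ref{ls}, giving $\lS=q$) are proved without reference to Fact~\ref{drift-bounds-1}. The trade-off: your argument leans on the already-established structural lemma and reduces the calculus to a one-line estimate, whereas the paper's proof is self-contained within the fact but hides a slightly longer monotonicity computation for $h$. One cosmetic point: your parenthetical appeal to Fact~\ref{drift-f-facts}(ii) for ``equality only at $\gcrc$'' is unnecessary (and not quite what that fact states); you only need $f\ge 0$ on $[\gcrc,1]$ from Lemma~\ref{drift-bounds}(ii) plus $f(r)<0$ to conclude $r<\gcrc$.
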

\begin{proof} By solving for $\lambda$ in (\ref{rc-gc-1}), it is sufficient to show that
	\[q > \frac{1-x}{x} \ln \left(\frac{1+(q-1)x}{1-x}\right) = h(x)\]
	for $x \in [0,1]$. A straightforward calculation shows that $h$ is decreasing in $(0,+\infty)$ and that $\lim\limits_{x \rightarrow 0} h(x) = q$. \end{proof}

\noindent
Finally, we can use the results in this subsection to prove Corollary \ref{rc-sup} stated in the previous subsection.

%\pagebreak
\begin{proof}[{\bf Proof of Corollary \ref{cor:rc-sub}:}] 
	
	By Lemma 3.2 in \cite{BGJ}, $L_2(G) < n^{3/4}$ w.h.p. Conditioning on this event, independently color each component of $G$ red with probability $1/q$. Let $L_r$ denote the size of the largest red component and $n_r$ the total number of red vertices.
	
	Let $\Gamma_{\rm \theta}$ be the intersection of the events that ${\cal L}(G)$ is colored red and $L_1(G) = \vf n$ where $\vf n \in \N$. Observe that $\Pr[L_r = \vf n \,|\, \Gamma_{\rm \theta}] = 1$, and by Hoeffding's inequality $\Pr[n_r\in J \,|\, \Gamma_{\rm \theta}] = 1 - O(n^{-2})$ where $J := \left[\left(\vf + \frac{1-\vf}{q}\right)n-\xi,\left(\vf + \frac{1-\vf}{q}\right)n+\xi\right]$ with $\xi = \sqrt{n^{7/4}\log n}$. Putting these two facts together,
	\begin{eqnarray}
	%1 &\le& \Pr[L_r = \vf n|n_r \in J,\Gamma_\theta] \Pr[n_r \in J|\Gamma_\theta] + \Pr[n_r \not\in J|\Gamma_\theta] \nonumber\\
	%\Pr[n_r \in J|\Gamma_\theta] &\le& \Pr[L_r = \vf n,n_r \in J|\Gamma_\theta] \nonumber\\
	%\Pr[n_r \in J|\Gamma_\theta] \Pr[\Gamma_\theta] &\le& \Pr[L_r = \vf n,n_r \in J,\Gamma_\theta] \nonumber\\
	\frac{1}{2q}\Pr[L_1(G) = \vf n] \le \Pr[n_r \in J \,|\, \Gamma_{\rm \theta} ] \Pr[\Gamma_{\rm \theta}] \le \Pr[L_r = \vf n,n_r \in J]. \nonumber
	\end{eqnarray}
	By Lemma 3.1 in \cite{BGJ}, conditioned on the red vertex set, the red subgraph is distributed as a $G(n_r,p)$ random graph, so
	\begin{eqnarray}
	\frac{1}{2q}\Pr[L_1(G) = \vf n] \le \sum_{m \in J} \Pr[L_r = \vf n|n_r = m]\Pr[n_r=m]  \le  \max_{m \in J}\Pr[\ell(m) = \vf n] \nonumber
	\end{eqnarray}
	where $\ell(m)$ is distributed as the size of the largest component of a $G(m,p)$ random graph. Note that for $m \in J$ the random graph $G(m,p)$ is super-critical because $\lambda > q$. Since $\xi = \sqrt{n^{7/4}\log n}$, by (\ref{phi:definition}) and Lemma \ref{lemma:rg-sup} with $A=\sqrt{n^{3/4}\log n}$, $\Pr[|\ell(m) - \phi(\vf) n| > 2\xi] = O(n^{-2})$. Since $\lambda > q = \lS$, Lemma \ref{drift-bounds} implies that there exists a constant $\delta \in (0,1)$ such that $|\vf-\phi(\vf)| > \delta|\vf-\gcrc|$. Thus, if $|\vf - \gcrc| n > n^{8/9}$, then $\Pr[L_1(G) = \vf n] = O(n^{-2})$. The result follows by a union bound over all the positive integer values of $\vf n$ such that $|\vf - \gcrc| n > n^{8/9}$ and $\vf n \le n$. \end{proof}

%The range $[\gcrc n + n^{8/9},n]$ can be covered by $O(n^{1/8})$ intervals of size $n^{7/8}$. Therefore, a union bound implies that $\Pr[L_1(G) > \gcrc n + n^{8/9}] = O(n^{-1}).$ The same argument works for bounding $\Pr[L_1(G) < \gcrc n - n^{8/9}]$ and the result follows.

%\pagebreak
\subsection{Binomial coupling}\label{bc}
%\par\bigskip\noindent
%{\bf Binomial coupling.}\ \

In our coupling constructions we will use the following fact about the coupling of two binomial random variables.

\begin{lemma} \label{bc-str} 
	Let $X$ and $Y$ be binomial random variables with parameters $m$ and $r$, where $r \in (0,1)$ is a constant. Then, for any integer $y > 0,$ there exists a coupling $(X,Y)$ such that for a suitable constant $\gamma = \gamma(r)> 0$, 
	\[\Pr[X-Y=y] \ge 1 - \frac{\gamma y}{\sqrt{m}}.\]
	Moreover if $y = a\sqrt{m}$ for a fixed constant $a$, then $\gamma a < 1.$
\end{lemma}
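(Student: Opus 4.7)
My plan is to construct the coupling by reducing to a total-variation estimate between a binomial and its integer shift. The key observation is that over all couplings $(X,Y)$ with the prescribed marginals, $\Pr[X-Y=y]$ is maximized by the coupling obtained by first sampling $Y\sim \mathrm{Bin}(m,r)$ and then pairing $X$ maximally with $Y+y$; the maximum value is
\[\Pr[X-Y=y] = 1 - d_{TV}\bigl(\mathrm{Bin}(m,r),\,\mathrm{Bin}(m,r)+y\bigr).\]
So the problem reduces to showing that this total variation distance is at most $\gamma y/\sqrt m$.

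For the TV bound I would use the triangle inequality to telescope the shift:
\[d_{TV}\bigl(\mathrm{Bin}(m,r),\,\mathrm{Bin}(m,r)+y\bigr) \le y\cdot d_{TV}\bigl(\mathrm{Bin}(m,r),\,\mathrm{Bin}(m,r)+1\bigr),\]
which reduces the task to the unit-shift case. For $B\sim \mathrm{Bin}(m,r)$ the unit-shift TV distance equals $\tfrac{1}{2}\sum_k |\Pr[B=k]-\Pr[B=k-1]|$. Since the ratio $\Pr[B=k]/\Pr[B=k-1] = (m-k+1)r/[k(1-r)]$ crosses $1$ exactly once, near $k^*=\lfloor (m+1)r\rfloor$, the difference $\Pr[B=k]-\Pr[B=k-1]$ changes sign only once, so the sum of absolute values telescopes to $2\Pr[B=k^*]$. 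Finally, Stirling's formula (or the local central limit theorem) gives $\Pr[B=k^*]\le c(r)/\sqrt m$ for an explicit constant $c(r)$, so $\gamma$ can be taken proportional to $1/\sqrt{2\pi r(1-r)}$. For the moreover part, substituting $y=a\sqrt m$ gives $\gamma y/\sqrt m = \gamma a$, which is less than $1$ provided the (application-specific) constant $a$ is smaller than $1/\gamma$, so that the stated coupling probability is nontrivially bounded away from $0$.

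The main delicacy is ensuring an explicit rather than merely asymptotic estimate in the unit-shift bound: the telescoping identity reduces it to pointwise control of $\Pr[B=k^*]$, and Stirling's formula yields a sharp constant, but one must keep track of the dependence on $r$ carefully because the moreover clause insists on a quantitative inequality $\gamma a<1$ rather than just a qualitative upper bound. Everything else — the maximal coupling step and the triangle inequality — is routine.
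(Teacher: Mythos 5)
Your proof is correct, but it follows a genuinely different route from the paper's. The paper builds the coupling adaptively: it reveals the $m$ Bernoulli trials one at a time, tracks the partial-difference process $D_k=\sum_{i\le k}(X_i-Y_i)$, samples independently while $D_k\neq y$ and locks $X_{k+1}=Y_{k+1}$ once $D_k=y$; the event $\{X-Y=y\}$ is then lower-bounded by the probability that a lazy symmetric random walk hits level $y$ by time $m$, which is controlled via the reflection principle and the Berry--Ess\'een theorem. Your argument instead uses the maximal coupling of $X$ and $Y+y$, so that $\Pr[X-Y=y]=1-d_{\mathrm{TV}}(\mathrm{Bin}(m,r),\mathrm{Bin}(m,r)+y)$, telescopes the shift to reduce to the unit-shift case, uses unimodality (log-concavity) of the binomial PMF to evaluate the unit-shift total variation distance exactly as the modal probability $p_{k^*}$, and finishes with a local central limit (Stirling) bound $p_{k^*}=O(1/\sqrt{m})$. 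Both arguments are sound and roughly the same length; yours is arguably cleaner in that it avoids the error terms of Berry--Ess\'een and obtains a slightly sharper constant ($\gamma\approx 1/\sqrt{2\pi r(1-r)}$ versus the random-walk route's $1/\sqrt{\pi r(1-r)}$), while the paper's construction has the advantage of being an explicit online coupling rather than an appeal to the existence of a maximal coupling. Your treatment of the ``moreover'' clause is appropriately cautious: as you note, $\gamma a<1$ is not automatic for arbitrary $a$ but rather requires $a<1/\gamma$; the paper handles this by deferring the $y=\Theta(\sqrt m)$ regime to Lemma~6.7 of \cite{LNNP}, where the relevant constant is verified for the constants $a$ arising in the application.
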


\begin{proof} 
	This lemma is a slight generalization of Lemma 6.7 in \cite{LNNP} and, like that lemma, follows from a standard fact about symmetric random walks. When $y = \Theta(\sqrt{m})$ the result follows directly from Lemma 6.7 in \cite{LNNP}, so we assume $y < \sqrt{m}$ which will simplify our calculations.
	
	Let $X_1 ,..., X_m,Y_1,...,Y_m$ be Bernoulli i.i.d's with parameter $r$. Let $X = \sum_{i=1}^m X_i$, $Y = \sum_{i=1}^m Y_i$, and $D_k = \sum_{i=1}^k (X_i - Y_i)$. We construct a coupling for $(X,Y)$ by coupling each $(X_k,Y_k)$ as follows:
	\begin{enumerate}
		\item If $D_k \neq y$, sample $X_{k+1}$ and $Y_{k+1}$ independently.
		\item If $D_k = y$, set $X_{k+1} = Y_{k+1}$.
	\end{enumerate}
	Clearly this is a valid coupling since $X$ and $Y$ are both binomially distributed.
	
	If $D_k = y$ for any $k \le m$, then $X - Y = y$. Therefore,
	$\Pr[X-Y=y] \ge \Pr[M_m \ge y]$ where $M_m = \max\{D_0,...,D_m\}$. Observe that while $D_k \neq y$, $\{D_k\}$ behaves like a (lazy) symmetric random walk. The result then follows from the following fact:
	
	\begin{fact}\label{fact:srw} Let $\xi_1,...,\xi_m$ be i.i.d such that $\Pr[\xi_i=1]=\Pr[\xi_i=-1] = w$ and $\Pr[\xi_i=0] = 1-2w$. Let $S_k = \sum_{i=1}^k \xi_i$ and $M_k = \max\{S_1,...,S_k\}$. Then, for any positive integer $y < \sqrt{m}$, there exists a constant $\gamma = \gamma(r)> 0$ such that
		\[\Pr[M_m \ge y] \ge 1 - \frac{\gamma y}{\sqrt{m}}.\]
	\end{fact}
	
	\begin{proof} This is a well-known fact about symmetric random walks, so we just sketch one way of proving it. By the {\it reflection principle}, $\Pr[M_m \ge y] \ge 2\Pr[S_m > y]$ (see, e.g., \cite{GS}) and by the Berry-Ess\'{e}en inequality, $|\Pr[S_m >  k \sqrt{2w m}]-\Pr[N>k]| = O(m^{-1/2})$ where $N$ is a standard normal random variable (see, e.g., \cite{F}). The result follows from the fact that $2 \Pr[N > k] \ge 1 - \sqrt{\frac{2}{\pi}}k$. \end{proof}
	
	\noindent
	Note that in our case $w=r(1-r)$.\end{proof}

%\pagebreak
\subsection{Hitting time estimate for supermartingales}\label{subsection:hitting-times-sub}

We will require the following easily derived hitting time estimate for supermartingales.

\begin{lemma}
	\label{lemma:hitting-times-sub}
	Consider the stochastic process $\{Z_t\}$ such that $Z_t \in [-n,n]$ for all $t \ge 0$. Assume $Z_0 > a$ for some $a \in [-n,n]$ and let $T = \min\{t > 0 : Z_t \le a\}$. Suppose $\E[Z_{t+1}-Z_{t}|{\cal F}_t] \le -A$, where $A > 0$ and ${\cal F}_t$ is the history of the first $t$ steps. Then, $\E[T] \le 4n/A$.
\end{lemma}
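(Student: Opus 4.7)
\medskip\noindent
\textbf{Proof plan for Lemma~\ref{lemma:hitting-times-sub}.} The plan is the standard supermartingale hitting-time argument: compensate $Z_t$ by an additive drift, apply optional stopping at a bounded time, use the two-sided bound on $Z_t$ to extract an estimate on the stopping time, and then send the truncation to infinity.

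First I would define the stopped, compensated process
\[
Y_t \;=\; Z_{t \wedge T} + A\,(t \wedge T).
\]
I claim $\{Y_t\}$ is a supermartingale with respect to $\{\mathcal F_t\}$. Indeed, on $\{T \le t\}$ we have $Y_{t+1} = Y_t$, while on $\{T > t\}$,
\[
\E[Y_{t+1} - Y_t \mid \mathcal F_t] \;=\; \E[Z_{t+1}-Z_t \mid \mathcal F_t] + A \;\le\; -A + A \;=\; 0,
\]
using the drift hypothesis (which is only needed before the hitting time). Combining both cases gives $\E[Y_{t+1}\mid\mathcal F_t] \le Y_t$.

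Next I would apply optional stopping at the deterministic time $t$ (no integrability issue, since the stopping time is bounded and $|Y_t|$ is bounded by $n + At$). This yields $\E[Y_t] \le \E[Y_0] = Z_0 \le n$. On the other hand, since $Z_{t\wedge T} \ge -n$,
\[
\E[Y_t] \;=\; \E[Z_{t\wedge T}] + A\,\E[t\wedge T] \;\ge\; -n + A\,\E[t\wedge T].
\]
Rearranging gives $\E[t \wedge T] \le 2n/A$ for every $t \ge 0$. Letting $t \to \infty$ and applying the monotone convergence theorem to the nondecreasing sequence $t \wedge T$ yields $\E[T] \le 2n/A \le 4n/A$, as required (in particular $T < \infty$ almost surely).

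I do not expect any real obstacle here; the argument is routine. The only point that requires a bit of care is that the drift bound is assumed only implicitly while the process is still above $a$, so one must stop at $T$ before compensating by $At$ — otherwise the hypothesis would be insufficient to conclude that the compensated process is a supermartingale. Bounding $Z_{t\wedge T}$ from below by $-n$ (rather than by $a$) is what costs the extra factor of $2$ in the final constant, leaving plenty of slack relative to the stated bound $4n/A$.
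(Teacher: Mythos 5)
Your proposal is correct, and it in fact proves the stronger bound $\E[T]\le 2n/A$; the supermartingale verification for $Y_t=Z_{t\wedge T}+A(t\wedge T)$, the deterministic-time comparison $\E[Y_t]\le \E[Y_0]$, and the monotone-convergence passage $t\to\infty$ are all sound, and you correctly note that only the drift before the hitting time is used.

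Your route differs from the paper's. The paper does not use the linear compensator of the stopped process; instead it sets $Y_t = Z_t^2 - 4nZ_t - 2nAt$, checks that this is a submartingale (the term $-4nZ_t$, combined with $Z_t\in[-n,n]$, forces the drift contribution $\E[Z_{t+1}-Z_t\mid\mathcal F_t](2Z_t-4n)\ge 2nA$ to dominate the compensator $-2nA$), and then applies the optional stopping theorem at $T$ itself, sandwiching $\E[Y_T]$ between $-3n^2$ and $5n^2-2nA\E[T]$ to get $\E[T]\le 4n/A$. Your argument is more elementary and arguably tighter on two counts: it avoids invoking optional stopping at the unbounded time $T$ (the paper's remark that ``$Y_t$ is bounded'' is only true for the upper bound, so strictly speaking its OST application needs exactly the truncation-and-limit step you carry out), and it yields the constant $2$ rather than $4$. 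What the paper's quadratic construction buys is a process that is a submartingale without stopping, at the price of cruder constants; both proofs use the drift hypothesis only up to time $T$, which is what the applications in Sections \ref{upper-sub} and \ref{section-sup} require.
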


\begin{proof}
	Let $Y_t = Z_t^2 - 4nZ_t - 2nAt$. A standard calculation reveals that $Y_t$ is a submartingale; i.e., $\E[Y_t-Y_{t-1}|{\cal F}_{t-1}] \ge 0$ for all $t > 0$. Observe also that $T$ is a stopping time, since the event $\{T=t\}$ depends only on the history up to time $t$.
	Since $Z_t \in [-n,n]$, $Y_t$ is bounded and thus the optional stopping theorem (see, e.g., \cite{D}) implies
	\[5n^2 - 2nA\E[T] \ge \E[Y_T] \ge \E[Y_0] \ge -3n^2.\]
	Hence,  $\E[T] \le 4n/A$, as desired.
\end{proof}

%\pagebreak
\section{Mixing time upper bounds}\label{sec:upper-bounds}

In this section we prove the upper bound portion of Theorem \ref{thm:intro1} from the introduction.

\begin{thm} Consider the CM dynamics for the mean-field random-cluster model with parameters $p=\lambda/n$ and $q$ where $\lambda > 0$ and $q > 1$ are constants independent of $n$. If $\lambda \not\in [\ls,\lS]$, then $\taumix = O(\log n)$.
\end{thm}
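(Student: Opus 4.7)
My plan is to prove $\taumix = O(\log n)$ by exhibiting a Markovian one-step coupling $(X_t, Y_t)$ of the CM dynamics with $\Pr[\Tcoup > c\log n] \le 1/4$ for a suitable constant $c$; the bound (\ref{mt}) then yields the theorem. The argument splits into the sub-critical range $\lambda < \ls$ and the super-critical range $\lambda > \lS = q$, and in each case the coupling runs in two successive phases: a \emph{burn-in} that drives each chain from an arbitrary start into a typical set $\mathcal{T}$, followed by a \emph{coalescence} phase that merges them. The central quantity throughout is $\vf_t = L_1(X_t)/n$, whose one-step evolution is governed by $f(\vf) = \vf - \phi(\vf)$ via the random-graph representation (\ref{phi:definition}) of the post-CM active subgraph.

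In the sub-critical case I would take $\mathcal{T}_s = \{A : L_1(A) \le K\log n\}$ for a large constant $K$. Since $\lambda < \ls$, Lemma~\ref{drift-bounds}(i) gives $f(\vf) \ge \delta$ uniformly on $(\theta_{\rm min},1]$, and a Chernoff bound on the number of active vertices (concentrated within $O(\sqrt{n\log n})$ of $(\vf_t + (1-\vf_t)/q)n$) together with Lemma~\ref{sup-gc-ld} forces $\vf_{t+1} \le \vf_t - \delta/2$ with probability $1 - e^{-\Omega(n)}$ whenever $L_1 = \Omega(n)$. Lemmas~\ref{rg-large} and~\ref{rg-sub-1} then handle the transition through $L_1 = O(n^{11/12})$ down to $L_1 = O(\log n)$, completing the burn-in in $O(\log n)$ steps. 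Once both $X_t, Y_t \in \mathcal{T}_s$, their active sets have sizes differing by $O(\sqrt{n})$, and the edges added between active vertices form two sub-critical $G(\cdot,\lambda/n)$ random graphs; I would apply Lemma~\ref{bc-str} to the binomial edge counts, using Lemma~\ref{rg-isolated} to supply a linear reservoir of isolated vertices that absorbs the surplus, so as to achieve $X_{t+1} = Y_{t+1}$ with constant probability per step. Amplification over $O(\log n)$ attempts gives $\Tcoup = O(\log n)$ with probability $\ge 3/4$.

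In the super-critical case $\lambda > q$ the typical set is $\mathcal{T}_S = \{A : |L_1(A) - \gcrc n| \le K\sqrt{n\log n},\ L_2(A) \le K\log n\}$. Lemma~\ref{drift-bounds}(iii) yields the geometric contraction $|\phi(\vf) - \vf| \ge \delta|\vf - \gcrc|$, and Fact~\ref{drift-bounds-1} guarantees that the active subgraph is always super-critical so a giant forms in one step even from the empty configuration; combined with Lemma~\ref{lemma:rg-sup} this gives the drift recursion $\E[|L_1(X_{t+1}) - \gcrc n|] \le (1-\delta)|L_1(X_t) - \gcrc n| + O(\sqrt{n})$, yielding the burn-in in $O(\log n)$ steps. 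The coalescence phase is the main technical obstacle: with $X_t, Y_t \in \mathcal{T}_S$ one must couple a CM step so that the giants of $X_{t+1}, Y_{t+1}$ end up of identical size and the residual sub-critical pieces match. My plan is to (i) couple the activation of the two giants to be simultaneous; (ii) use Hoeffding to control the active-vertex counts to $O(\sqrt{n})$ precision; (iii) apply Lemma~\ref{bc-str} to the binomial counts of new edges across the active set so that the new giant sizes coincide with probability $\Omega(1)$, exploiting that by Lemma~\ref{drift-bounds}(iii) the gap $|L_1(X_t) - L_1(Y_t)|$ contracts geometrically; and (iv) couple the residual sub-critical random graphs as in the previous case, using Lemmas~\ref{rg-isolated} and~\ref{bc-str}. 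Iterating $O(\log n)$ rounds forces $X_t = Y_t$ with probability $\ge 3/4$, completing the proof.
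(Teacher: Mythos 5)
Your high-level strategy (drive both chains into a typical set via the drift of $L_1$, then couple) is the same as the paper's, and your use of the drift function $f$, Lemma~\ref{drift-bounds}, the binomial coupling and the isolated-vertex reservoir are the right ingredients. However, there are several genuine gaps in the execution.

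First, the sub-critical burn-in claim that $\vf_{t+1}\le\vf_t-\delta/2$ holds with probability $1-e^{-\Omega(n)}$ whenever $L_1=\Omega(n)$ is false. With probability $1-1/q$ the giant is \emph{not} activated, in which case it is left untouched and $L_1(X_{t+1})\ge L_1(X_t)$. The contraction only holds in expectation, averaging the active case (where Lemma~\ref{drift-bounds}(i) applies) against the inactive case; the paper then needs a hitting-time estimate for supermartingales (Lemma~\ref{lemma:hitting-times-sub}) to turn the expected drift into an $O(1)$-step hitting bound, followed by a further $O(\log n)$ steps to activate the remaining small pieces. Your argument also says nothing about the regime $\vf_t$ near $\theta_{\rm min}$, where the active subgraph is nearly critical and Corollary~\ref{cor:rg-sup} does not directly apply; the paper handles this by a separate monotonicity argument.

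Second, you implicitly assume the starting configuration has a single large component, but for a mixing-time bound you must start from arbitrary $X_0$, including ones with $\Theta(n^{1/12})$ large components. The one-dimensional tracking of $L_1$ does not control this. The paper devotes a dedicated phase (Lemma~\ref{unique-gc}) to showing that the number of large components decays geometrically in expectation and hits $\le 1$ after $O(\log n)$ steps with probability $\Omega(1)$; that step is missing from your outline.

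Third, the coalescence plan does not work as stated. You propose coupling "the binomial counts of new edges across the active set" and claim $X_{t+1}=Y_{t+1}$ with constant probability per step. Matching the number of re-sampled edges does not match the resulting graphs, and even a perfect bijective coupling of the \emph{active} parts leaves the inactive parts of $X$ and $Y$ unreconciled, so $X_{t+1}\neq Y_{t+1}$. The binomial coupling in the paper is applied at a different place: the activation indicators of the matched isolated vertices are coupled via Lemma~\ref{bc-str} so that the two \emph{active vertex sets have the same size}; only then is the percolation coupled through a vertex bijection to give the active parts identical component structure. Even after that, the paper needs (a) a separate contraction argument (Coupling~I) for the total mass $d_t$ of unmatched non-giant components before the binomial coupling has a usable failure probability, and (b) a final phase (Lemma~\ref{coupling-ec}) which, over $O(\log n)$ steps, gradually "fixes" vertices as components are simultaneously activated, since matching component structure is strictly weaker than configuration equality. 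Your proposal collapses these into a single step, and the claim of $\Omega(1)$ one-step coalescence does not hold.
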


\begin{proof}[Proof Sketch] 
	
	Consider two copies $\{X_t\}$ and $\{Y_t\}$ of the CM dynamics starting from two arbitrary configurations $X_0$ and $Y_0$. We design a coupling $(X_t,Y_t)$ of the CM steps and show that $\Pr[X_T=Y_T] = \Omega(1)$ for some $T = O(\log n)$; the result then follows from (\ref{mt}). The coupling consists of four phases. In the first phase $\{X_t\}$ and $\{Y_t\}$ are run independently. In Section \ref{sub-sec:first-phase} we establish that after $O(\log n)$ steps $\{X_t\}$ and $\{Y_t\}$ each have at most one large component with probability $\Omega(1)$. We call a component {\it large} if it contains at least $2 n^{11/12}$ vertices; otherwise it is {\it small}. 
	
	In the second phase, $\{X_t\}$ and $\{Y_t\}$ also evolve independently. In Sections \ref{upper-sub} and \ref{section-sup} we show that, conditioned on the success of Phase 1, after $O(\log n)$ steps with probability $\Omega(1)$ the largest components in $\{X_t\}$ and $\{Y_t\}$ have sizes close to their expected value: $O(\log n)$ in the sub-critical case and 
	{\raise.27ex\hbox{$\scriptstyle\mathtt{\sim}$}}$\theta_r n$ in the super-critical case. In the third phase, $\{X_t\}$ and $\{Y_t\}$ are coupled to obtain two configurations with the same component structure. This coupling, described in Section \ref{upper-sc}, makes crucial use of the binomial coupling of Section \ref{bc}, and conditioned on a successful conclusion of Phase 2 succeeds with probability $\Omega(1)$ after $O(\log n)$ steps. In the last phase, a straightforward coupling is used to obtain two identical configurations from configurations with the same component structure. This coupling is described in Section \ref{sub-section:phase-four} and succeeds w.h.p.\ after $O(\log n)$ steps, conditioned on the success of the previous phases.   
	
	Putting all this together, there exists a coupling $(X_t,Y_t)$ such that, after $T=O(\log n)$ steps, $X_T = Y_T$ with probability $\Omega(1)$. The reminder of this section fleshes out the above proof sketch.
\end{proof}

\noindent
We now introduce some notation that will be used throughout the rest of the paper. As before, we will use ${\cal L}(X_t)$ for the largest component in $X_t$ and $L_i(X_t)$ for the {\it size} of the $i$-th largest component of $X_t$. (Thus, $L_1(X_t) = |{\cal L}(X_t)|$.) For convenience, we will sometimes write $\theta_t n$ for $L_1(X_t)$. Also, we will use ${\cal E}_t$ for the event that ${\cal L}(X_t)$ is activated, and $A_t$ for the number of activated vertices at time $t$.

\subsection{Convergence to configurations with a unique large component} \label{sub-sec:first-phase}

\begin{lemma}\label{unique-gc} For any starting random-cluster configuration $X_0$, there exists $T = O(\log n)$ such that $X_T$ has at most one large component with probability $\Omega(1)$. 
\end{lemma}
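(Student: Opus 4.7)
My plan is to track the random variable $K_t$, the number of large components in $X_t$, which satisfies $K_t \le n^{1/12}/2$ deterministically. I would argue that $K_t$ behaves as a damped supermartingale that contracts geometrically towards a constant level, after which one additional step drives $K_t$ down to at most $1$ with constant probability.

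For the supermartingale step, observe that the CM dynamics preserves every unactivated component exactly, while the activated vertex set, of random size $A_t$, carries a fresh $G(A_t,\lambda/n)$ random graph independent of the rest of the configuration. Its density parameter is $\lambda A_t/n \le \lambda = O(1)$, so Lemma~\ref{rg-large} (applied with $n$ replaced by $A_t$, and trivially valid when $A_t$ is small since then no component of size $\ge 2n^{11/12}$ can form at all) yields that the second-largest component on the activated vertices has fewer than $2n^{11/12}$ vertices with probability $1-O(n^{-1/12})$. Letting $S_t \subseteq \{1,\dots,K_t\}$ be the set of activated large components, this gives $K_{t+1} \le K_t - |S_t| + 1$ with probability $1-O(n^{-1/12})$. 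Since $|S_t| \sim \mathrm{Bin}(K_t,1/q)$, taking expectations and absorbing the low-probability failure into a constant (using the crude bound $K_t \le n^{1/12}$) yields
\[
\E[K_{t+1}\mid X_t] \le \left(1-\tfrac{1}{q}\right) K_t + C
\]
for some constant $C=C(q)$. Iterating this recursion for $T_1 = c_q\log n$ steps with $c_q$ sufficiently large gives $\E[K_{T_1}] \le M$ for some constant $M=M(q)$, and Markov's inequality gives $\Pr[K_{T_1} \le 2M] \ge 1/2$.

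Having contracted $K_t$ to a constant, I would take one further step. On the event $K_{T_1} \le 2M$, the probability that every large component of $X_{T_1}$ is activated at step $T_1+1$ is at least $(1/q)^{2M} = \Omega(1)$. On this event, the next configuration restricted to the previously-large vertices is a single $G(A_{T_1+1},\lambda/n)$ random graph, which by Lemma~\ref{rg-large} has at most one component exceeding $2n^{11/12}$ vertices with probability $1-O(n^{-1/12})$. Hence $K_{T_1+1} \le 1$ with probability $\Omega(1)$. A final union bound over the $O(\log n)$ applications of Lemma~\ref{rg-large} completes the proof with $T = T_1+1 = O(\log n)$.

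The main technical obstacle is extracting a clean geometric contraction from the inequality $K_{t+1} \le K_t - |S_t| + 1$: the unremovable ``$+1$'' prevents contraction all the way to zero, only to a constant attractor of size $\Theta(q)$, which is why a qualitatively different final step (simultaneous activation of \emph{all} remaining large components, a constant-probability event precisely when $K_t$ is constant) is needed to close the gap to $K \le 1$. Some bookkeeping is also required for the low-probability failures of Lemma~\ref{rg-large}, both in controlling their contribution to $\E[K_{t+1}\mid X_t]$ and in the union bound over the $O(\log n)$ CM steps.
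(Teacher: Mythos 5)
Your proposal follows essentially the same route as the paper's proof: track the number $K_t$ of large components, show that the expected number of newly created large components per step is $O(1)$ via Lemma~\ref{rg-large} applied to the active subgraph, deduce a contraction of $\E[K_{t+1}\mid X_t]$ down to a constant over $O(\log n)$ steps, and finish by activating all $O(1)$ remaining large components simultaneously (a constant-probability event) and applying Lemma~\ref{rg-large} once more. The one step that needs tightening is the bookkeeping you flag yourself: Lemma~\ref{rg-large} applied to the active graph gives failure probability $O(A_t^{-1/12})$, not $O(n^{-1/12})$ uniformly in $A_t$, and if you pair the honest worst-case failure probability (which is $O(n^{-11/144})$, attained when $A_t$ is only slightly above $2n^{11/12}$) with the worst-case crude bound $K\le n^{1/12}/2$, the error term is $O(n^{1/144})$ rather than a constant; the recursion would then only drive $K_t$ down to $O(n^{1/144})$, and the final simultaneous-activation step would have probability $q^{-O(n^{1/144})}$ instead of $\Omega(1)$. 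The repair is exactly what the paper does: condition on $A_t=a$, note that on the failure event the number of new large components is at most $a/(2n^{11/12})$, and pair this with the failure probability $O(a^{-1/12})$ for the same $a$, so the product is $O(a^{11/12}/n^{11/12})=O(1)$ uniformly; with that adjustment your recursion $\E[K_{t+1}\mid X_t]\le(1-1/q)K_t+C$ and the rest of your argument go through unchanged (and no union bound over the $O(\log n)$ contraction steps is needed, since those failures are already absorbed into the expectation bound).
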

\begin{proof} 
	
Let $N_t$ be the number of new large components created in sub-step {\rm (iii)} of the CM dynamics at time $t$. If $A_t <  2 n^{11/12}$, then $N_t=0$. Together with Lemma \ref{rg-large} this implies that $\Pr[N_t>1|X_t,A_t=a] \le a^{-1/12}$ for all $a \in [0,n]$. Thus,
\begin{eqnarray}
\E[N_t|X_t] &=& \sum\limits_{a=0}^n \E[N_t|X_t,A_t=a]\Pr[A_t=a|X_t] \nonumber\\
&\le& \sum\limits_{a=0}^n \left(\Pr[N_t \le 1|X_t,A_t=a]+\frac{a}{2 n^{11/12}}\Pr[N_t>1|X_t,A_t=a]\right)\Pr[A_t=a|X_t] \nonumber\\
&\le& \sum\limits_{a=0}^n \left(1+\frac{a}{2 n^{11/12}}\frac{1}{a^{1/12}}\right)\Pr[A_t=a|X_t] \le 2. \nonumber
\end{eqnarray}
Let $K_t$ be the number of large components in $X_t$ and let $C_t$ be the number of activated large components in sub-step {\rm (i)} of the CM dynamics at time $t$. Then,
\begin{eqnarray}
\E[K_{t+1}|X_t] = K_t - \E[C_t|X_t] + \E[N_t|X_t] \le K_t - \frac{K_t}{q} + 2 \le \left(1-\frac{1}{2q}\right)K_t \nonumber
\end{eqnarray}
provided $K_t \ge 4q$. Assuming that $K_{t} \ge 4q$ for all $t < T$, we have
\[\E[K_t|X_0] \le \left(1-\frac{1}{2q}\right)^tK_0.\]
Hence, Markov's inequality implies that $K_T < 4q$ w.h.p.\ for some $T=O(\log n)$. If at time $T$ the remaining $K_{T}$ large components become active, then $K_{T+1} \le 1$ w.h.p.\ by Lemma \ref{rg-large}. All $K_{T}$ components become active simultaneously with probability at least $q^{-4q}$ and thus $K_{T+1} \le 1$ with probability $\Omega(1)$, as desired.
\end{proof}

%\pagebreak
\subsection{Convergence to typical configurations: the sub-critical case}\label{upper-sub}

\begin{lemma}\label{sub-contraction} Let $\lambda < \lambda_s$; if $X_0$ has at most one large component, then there exists $T=O(\log n)$ such that $L_1(X_T) = O(\log n)$ with probability $\Omega(1)$.
\end{lemma}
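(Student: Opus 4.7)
The plan is to track $\theta_t := L_1(X_t)/n$ and split the analysis into three sub-phases, parameterized by a small constant $c>0$ to be fixed relative to the drift constant $\delta$ from Lemma~\ref{drift-bounds}(i):
(A) the ``drift'' phase $\theta_t > \theta_{\rm min} + c$;
(B) the ``crossing'' phase $\theta_{\rm min} < \theta_t \le \theta_{\rm min} + c$;
(C) the ``destruction'' phase $\theta_t \le \theta_{\rm min}$.
Because $\lambda < \ls \le \lambda_c < q$ (Lemma~\ref{ls}), we have $\theta_{\rm min} > 0$, so sub-phase~(C) is non-trivial. Throughout, Lemma~\ref{rg-large} applied inductively with a union bound over the $O(\log n)$ steps would preserve ``at most one large component,'' keeping $L_2(X_t)\le 2n^{11/12}$ w.h.p.

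In sub-phase~(A) I would show that the expected one-step change in $L_1$ is $-\Omega(n)$. When ${\cal E}_t$ occurs (probability $1/q$), Hoeffding's inequality gives $A_t = (\theta_t + (1-\theta_t)/q)\,n \pm o(n)$, and the induced random graph has density $\lambda(1+(q-1)\theta_t)/q > 1$, so its giant concentrates around $\phi(\theta_t)\,n$ up to $O(\sqrt{n\log n})$ fluctuation by standard random-graph arguments (in the spirit of Lemma~\ref{lemma:rg-sup}), while Lemma~\ref{drift-bounds}(i) gives $\phi(\theta_t) \le \theta_t - \delta$. When ${\cal E}_t$ fails, the active vertices lie entirely outside ${\cal L}(X_t)$ and form a random graph of density at most $\lambda/q < 1$ (using $\lambda < q$), which is subcritical, so Lemma~\ref{rg-sub-1} yields only $O(\log n)$-size new components and $L_1$ is unchanged. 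Combining,
\[
\E[L_1(X_{t+1}) \mid X_t] \le L_1(X_t) - \frac{f(\theta_t)\,n}{q} + O(\sqrt{n\log n}) \le L_1(X_t) - \frac{\delta n}{2q}.
\]
Lemma~\ref{lemma:hitting-times-sub} would then give expected $O(1)$ hitting time to exit~(A); Markov's inequality converts this to $O(1)$ steps with constant probability.

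For sub-phase~(B), the same drift estimate shows that the first firing of ${\cal E}_t$ reduces $L_1$ to at most $(\theta_t - \delta)\,n + O(\sqrt{n\log n})$, which drops below $\theta_{\rm min}\,n$ once $c < \delta/2$; this takes $O(1)$ steps with constant probability. In sub-phase~(C), for either outcome of ${\cal E}_t$ the active random graph is subcritical (firing gives density $\lambda(1+(q-1)\theta_t)/q \le 1$, non-firing gives $\lambda(1-\theta_t)/q < 1$, each bounded away from~$1$), so by Lemma~\ref{rg-sub-1}, $L_1(X_{t+1}) = O(\log n)$ whenever ${\cal E}_t$ fires and $L_1(X_{t+1}) = L_1(X_t)$ otherwise. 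Hence after $O(\log n)$ further steps, ${\cal E}_t$ has fired with probability $1 - (1-1/q)^{O(\log n)} = 1 - n^{-\Omega(1)}$.

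The main technical obstacle will be the drift estimate in sub-phase~(A). After Phase~1 the configuration need not be ``typical'': the second-largest component can reach $2n^{11/12}$, so $\sum_{i\ge 2} L_i(X_t)^2$ may be as large as $\Theta(n^{23/12})$, well above the $O(n)$ of Lemma~\ref{rg-suscept}. Hoeffding on the per-component activations still gives $A_t$ a standard deviation of $O(n^{23/24}) = o(n)$, so the induced density is within $o(1)$ of $\lambda(1+(q-1)\theta_t)/q$ and the giant-component concentration translates this into $o(n)$ fluctuation in the new $L_1$, preserving the $\Omega(n)$ drift and closing the argument.
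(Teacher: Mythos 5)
Your overall strategy (one-step drift of $L_1$ controlled via $\phi$ and Lemma~\ref{drift-bounds}(i), the hitting-time estimate of Lemma~\ref{lemma:hitting-times-sub}, then a sub-critical ``destruction'' phase) is essentially the paper's, but two steps fail as written. First, in your crossing phase (B) you invoke ``the same drift estimate'' to claim that the first firing of ${\cal E}_t$ brings $L_1$ down to $(\theta_t-\delta)n + O(\sqrt{n\log n})$. That estimate rests on concentration of the largest component of the percolated graph around $\phi(\theta_t)n$, which (as in Lemma~\ref{lemma:rg-sup}) requires the density $\lambda(1+(q-1)\theta_t)/q$ to be bounded away from $1$; but for $\theta_t$ just above $\theta_{\rm min}$ the percolation step is arbitrarily close to critical, so no such concentration statement is available at density $\lambda(1+(q-1)\theta_t)/q$ itself. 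The missing ingredient is the monotonicity step the paper uses: dominate the active set by one of size $(\theta_{\rm min}+c + (1-\theta_{\rm min}-c)/q)n + o(n)$, whose density \emph{is} bounded away from $1$, and conclude $L_1(X_{t+1}) \le \phi(\theta_{\rm min}+c)n + o(n) \le (\theta_{\rm min}+c-\delta)n + o(n)$; your choice $c<\delta/2$ is exactly what makes this work, but the domination argument has to be supplied, not inherited from phase~(A).

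Second, your phase (C) conclusion ``$L_1(X_{t+1})=O(\log n)$ whenever ${\cal E}_t$ fires'' is false: Phase~1 only guarantees at most one \emph{large} component, so the inactive components other than ${\cal L}(X_t)$ may have size up to $2n^{11/12}$, and they are untouched by the percolation step. After the giant is dissolved, the largest component can therefore be a leftover inactive component of polynomial size, so a single firing of ${\cal E}_t$ does not finish the job (relatedly, your claim that upon firing the density is bounded away from $1$ for all $\theta_t\le\theta_{\rm min}$ is only true because of the buffer $\approx\delta/2$ below $\theta_{\rm min}$ created in phase (B), which you should use explicitly). The repair is the paper's endgame: once the configuration has no large component, every percolation step is sub-critical, so by Lemma~\ref{rg-sub-1} (plus concentration of $A_t$) all \emph{new} components are $O(\log n)$ w.h.p.\ at each of the next $O(\log n)$ steps, and a union bound over the at most $n$ pre-existing components shows that each of them is activated (hence broken into $O(\log n)$ pieces) within $O(\log n)$ steps w.h.p.; only then does $L_1(X_T)=O(\log n)$ follow with probability $\Omega(1)$.
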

\noindent
Let $\xi = \sqrt{2 n^{23/12}\log n}$. The following fact will be used in the proof. 

\begin{fact} \label{no-large-comp} If $X_t$ has at most one large component, then for sufficiently large $n$ each of the following holds with probability $1-O(n^{-1})$: 
	\begin{enumerate}[(i)]
		\item If ${\cal L}(X_t)$ is inactive, then all new components in $X_{t+1}$ have size $O(\log n)$.
		\item If ${\cal L}(X_t)$ is active, then $A_t \in J_t:= \left[L_1(X_t)+\frac{n-L_1(X_t)}{q}-\xi,L_1(X_t)+\frac{n-L_1(X_t)}{q}+\xi\right]$.
		\item If there is no large component in $X_t$, then $L_1(X_{t+1}) = O(\log n)$.
	\end{enumerate}
\end{fact}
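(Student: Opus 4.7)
The plan is to combine a Hoeffding concentration bound for the number of active vertices $A_t$ with the subcritical random-graph estimate of Lemma~\ref{rg-sub-1} applied to the fresh $G(A_t,\lambda/n)$ random graph drawn in sub-step (iii) of the CM dynamics. The structural input is that components of $X_t$ are activated independently with probability $1/q$, so $A_t=\sum_C |C|\,Z_C$ (where $Z_C\sim\mathrm{Bern}(1/q)$) is a sum of independent bounded contributions. Under the hypothesis ``at most one large component,'' every summand except possibly the unique large one has $|C|\le 2n^{11/12}$, while the gap $q-\lambda>0$ supplied by Lemma~\ref{ls} (via $\lambda<\lambda_s\le q$) is what eventually pushes the random graph into the strict subcritical regime required by Lemma~\ref{rg-sub-1}.

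For Part~(ii), condition on ${\cal L}(X_t)$ being active and write $A_t=L_1(X_t)+\sum_{C\ne{\cal L}}|C|\,Z_C$; the expectation is $L_1(X_t)+(n-L_1(X_t))/q$, the midpoint of $J_t$, and $\sum_{C\ne{\cal L}} |C|^2 \le 2n^{11/12}(n-L_1(X_t))\le 2n^{23/12}$. Hoeffding's inequality with $\delta=\xi$ then gives $\Pr[A_t\notin J_t]\le 2\exp(-2\xi^2/(2n^{23/12}))=O(n^{-2})$. For Part~(i), the same bound applied to $A_t=\sum_{C\,\text{small}} |C|\,Z_C$ (since ${\cal L}(X_t)$ is inactive) yields $A_t\le n/q+\xi$ with probability $1-O(n^{-2})$, and conditional on $A_t$ the fresh random graph on the active vertex set is $G(A_t,\lambda/n)$, with parameter $d=\lambda A_t/n\le\lambda/q+o(1)$. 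By Lemma~\ref{ls}, $\lambda/q<1$ with a positive constant gap, so $d$ is bounded strictly away from $1$, and Lemma~\ref{rg-sub-1} delivers $O(\log n)$ for the largest component of this random graph with probability $1-O(n^{-1})$. Since inactive components of $X_t$ persist unchanged, these are exactly the ``new'' components of $X_{t+1}$, proving Part~(i).

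For Part~(iii), the same random-graph estimate applies irrespective of whether ${\cal L}(X_t)$ is active: the hypothesis $L_1(X_t)<2n^{11/12}$ alone gives $\sum_C |C|^2\le 2n^{11/12}\cdot n=2n^{23/12}$, keeping $d=\lambda A_t/n$ strictly below $1$, so the fresh random graph on active vertices contributes only components of size $O(\log n)$ with probability $1-O(n^{-1})$. The remaining inactive components of $X_t$ persist into $X_{t+1}$ unchanged, so the full $L_1(X_{t+1})=O(\log n)$ bound rests on the inductive observation---already supplied by prior applications of Parts~(i) and~(ii) within the proof of Lemma~\ref{sub-contraction}---that at this stage of the dynamics every component of $X_t$ is already of size $O(\log n)$. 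The main technical obstacle I foresee is precisely this inductive bookkeeping: Part~(iii) is really an invariance statement, and the cleanest execution carries the $O(\log n)$ bound along the coupled trajectory rather than re-deriving it from scratch each step.
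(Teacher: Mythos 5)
Your proposal is correct and follows essentially the same route as the paper: Hoeffding's inequality applied to $A_t$ using $\sum_{j\ge 2} L_j(X_t)^2 < 2n^{23/12}$ gives the concentration in $J_t$ with failure probability $O(n^{-2})$, and then $\bigl(\tfrac{n-L_1(X_t)}{q}+\xi\bigr)\tfrac{\lambda}{n} \le \tfrac{\lambda}{q}+o(1) < 1$ (valid since $\lambda<\lambda_s\le q$) makes the percolation step sub-critical so that Lemma~\ref{rg-sub-1} yields the $O(\log n)$ bound, which is exactly the paper's proof of part (i), with parts (ii) and (iii) dispatched ``in similar fashion.'' Your caveat on part (iii) is apt: the percolation argument only controls the newly created components, and indeed when the paper invokes part (iii) inside Lemma~\ref{sub-contraction} it uses only that new components are $O(\log n)$, handling the surviving inactive components not by an inductive $O(\log n)$ invariant but by a union bound showing every component of $X_{T+1}$ is activated (and hence dissolved) within the subsequent $O(\log n)$ steps.
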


\begin{proof} Observe that $\E[A_t|X_t,\neg{\cal E}_t] = \frac{n-L_1(X_t)}{q} =: \mu$, and $\sum_{j \ge 2 } L_j(X_t)^2 < 2 n^{23/12}$ since $L_2(X_t) < 2 n ^{11/12}$. By Hoeffding's inequality:
\begin{eqnarray}
\Pr\left[\,|A_t - \mu| > \xi~|~X_t,\neg{\cal E}_t\right] \le 2\exp\left(-\frac{4 n^{23/12}\log n}{2 n^{23/12}}\right) \le \frac{2}{n^2} \label{active-hoef}. \nonumber
\end{eqnarray}
Thus, $A_t \le \frac{n-L_1(X_t)}{q}+\xi$ with probability at least $1 - O(n^{-2})$. Observe that \[\left(\frac{n-L_1(X_t)}{q}+\xi\right)\frac{\lambda}{n} < \frac{\lambda}{q} + o(1) < 1\]
for sufficiently large $n$; hence, the random graph $G(A_t,p)$ is sub-critical and part (i) follows from Lemma \ref{rg-sub-1}. Parts (ii) and (iii) follow in similar fashion.
\end{proof}

\begin{proof}[Proof of Lemma \ref{sub-contraction}] 
	
	%If $X_0$ has at most one large component, then it is easy to check that $X_t$ retains this property for $O(\log n)$ CM steps w.h.p. Thus, we condition on this event throughout this phase. We show first that one step of the CM dynamics contracts the size of the largest component in expectation.
	
	Suppose $X_t$ has a unique large component. If ${\cal L}(X_t)$ is activated in sub-step {\rm (i)} of the CM dynamics, then Lemma \ref{rg-large} implies that $X_{t+1}$ has at most one large component with probability $1-O(n^{-1/12})$. Otherwise, if ${\cal L}(X_t)$ is not activated, $X_{t+1}$ will have a unique large component with probability $1-O(n^{-1})$ by Fact \ref{no-large-comp}(i). A union bound then implies that during $T$ consecutive steps of this phase configurations will have at most one large component w.h.p.\ for any $T = O(\log n)$. Thus, we condition on this event. 	
	
	For ease of notation set $\ts := \theta_{\rm min}$, with $\theta_{\rm min}$ defined as in Section \ref{subsection:drift}. Note that $(\ts + (1-\ts)q^{-1})\lambda = 1$. Hence, if $L_1(X_t) = \ts n$ and ${\cal L}(X_t)$ is activated, then the percolation step (sub-step {\rm (iii)} of the CM dynamics) is critical with non-negligible probability. This makes the analysis in the neighborhood of $\ts n$ more delicate. 
	
	We consider first the case where $\theta_t \ge \ts + \varepsilon$ for some small constant $\varepsilon > 0$ to be chosen later. By Fact \ref{no-large-comp}(i), if ${\cal L}(X_t)$ is inactive all the new components have size $O(\log n)$ with probability $1-O(n^{-1})$. Thus,
	\begin{equation}
	\E[L_1(X_{t+1}) \,|\, X_t,\neg{\cal E}_t] \le L_1(X_t) + O(1) = \theta_t n + O(1). \label{case2-con1}
	\end{equation}
	
	To bound $\E[L_1(X_{t+1}) \,|\, X_t,{\cal E}_t]$, let $h^+(\theta_t) = \theta_t n + (1-\theta_t)q^{-1}n + \xi$ and let $\ell^+(\theta_t)$ be a random variable distributed as the size of the largest component of a $G(h^+(\theta_t),p)$ random graph. Then, by Fact \ref{no-large-comp}(ii) we have 
	\begin{eqnarray}
	\E[L_1(X_{t+1})\,|\,X_t,{\cal E}_t] &\le& \sum_{a \in J_t} \E[L_1(X_{t+1})\,|\,X_t,{\cal E}_t,A_t=a] \Pr[A_t = a\,|\,X_t,{\cal E}_t] + O(1) \nonumber\\
	&\le& \E[L_1(X_{t+1})\,|\,X_t,{\cal E}_t,A_t=h^+(\theta_t)] + O(1) = \E[\ell^+(\theta_t)]+O(1). \nonumber\label{case2-gc}
	\end{eqnarray}
	When $\theta_t \ge \ts+\varepsilon$, $G(h^+(\theta_t),p)$ is a super-critical random graph:
	\begin{equation}
	h^+(\theta_t)\frac{\lambda}{n} \ge \left(\ts+\varepsilon + \frac{1-\ts-\varepsilon}{q}\right)\lambda = 1 + \left(1 - \frac{1}{q}\right)\varepsilon\lambda > 1.\label{upper:sup-bound}
	\end{equation}
	Thus, Corollary \ref{cor:rg-sup} implies
	\begin{equation}
	\E[L_1(X_{t+1})~|~X_t,{\cal E}_t] \le \phi(\theta_t) n + O(\xi), \label{case2-con2}
	\end{equation}
	where $\phi(\theta_t)$ is defined as in (\ref{phi-definition}). Since $\lambda < \ls$, by Lemma \ref{drift-bounds} there exists a constant $\delta > 0$ such that $\theta_t - \phi(\theta_t) \ge \delta$. Therefore, putting (\ref{case2-con1}) and (\ref{case2-con2}) together, we have
	\begin{equation}
	\E[L_1(X_{t+1})~|~X_t] \le \left(1 - \frac{1}{q}\right)\theta_tn + \frac{\phi(\theta_t)n}{q} + O(\xi) \le \theta_t n-\frac{\delta n}{q} + O(\xi). \label{gc-change-1}
	\end{equation}
	
	As mentioned before, in a close neighborhood of $\ts$ the percolation step is critical with non-negligible probability, so when $\theta_t \in (\ts-\varepsilon,\ts+\varepsilon)$ we use monotonicity to simplify the analysis. In particular, observe that $\E[\ell^+(\theta_t)] \le \E[\ell^+(\ts+\varepsilon)]$. By (\ref{upper:sup-bound}), the random graph $G(h^+(\ts+\varepsilon),p)$ is super-critical. Hence, Corollary \ref{cor:rg-sup} implies 
	$\E[\ell^+(\ts+\varepsilon)] \le \phi(\ts+\varepsilon)n + O(\xi)$.
	
	The bounds in (\ref{case2-con1}) and (\ref{case2-gc}) still hold for $\theta_t \in (\ts-\varepsilon,\ts+\varepsilon)$, so
	\begin{eqnarray}
	\E[L_1(X_{t+1})~|~X_t] &\le& \left(1-\frac{1}{q}\right)\theta_t n + \frac{\phi(\ts+\varepsilon)n}{q} + O(\xi) \nonumber\\
	&\le& \theta_tn + \frac{(\phi(\ts+\varepsilon)-(\ts+\varepsilon))n}{q} +  \frac{2\varepsilon n}{q} + O(\xi). \nonumber
	\end{eqnarray}
	By Lemma \ref{drift-bounds}, $(\ts + \varepsilon) - \phi(\ts+\varepsilon) \ge \delta$, and by choosing $\varepsilon$ sufficiently small we obtain (\ref{gc-change-1}) for $\theta_t \in (\ts-\varepsilon,\ts+\varepsilon)$. Thus, there exists a constant $\gamma >0$ such that for all $\theta_t > \ts-\varepsilon$: \[\E[L_1(X_{t+1})-L_1(X_{t})\,|\,X_t] \le -\gamma n.\]
	
	Let
	$\tau = \min\{t > 0: L_1(X_t) \le (\ts-\varepsilon) n\}$.
	By Lemma \ref{lemma:hitting-times-sub}, $\E[\tau] \le 4/\gamma$ and thus $\Pr[\tau > 8/\gamma] \le 1/2$ by Markov's inequality. Hence, $L_1(X_T) \le (\ts-\varepsilon)n$ for some $T=O(1)$ with probability $\Omega(1)$.
	
	To conclude, we show that after $O(\log n)$ additional steps the largest component has size $O(\log n)$ with probability $\Omega(1)$. If $L_1(X_T) \le (\ts-\varepsilon)n$ and ${\cal L}(X_T)$ is activated, then the definition of $\ts$ implies that the percolation step of the CM dynamics is sub-critical, and thus $X_{T+1}$ has no large component w.h.p. Hence, $X_{T+1}$ has no large component with probability $\Omega(1)$. Now, by Fact \ref{no-large-comp}(iii) and a union bound, all the new components created during the $O(\log n)$ steps immediately after time $T+1$ have size $O(\log n)$ w.h.p. Another union bound over components shows that during these $O(\log n)$ steps, every component in $X_{T+1}$ is activated w.h.p. Thus, after $O(\log n)$ steps the largest component in the configuration has size $O(\log n)$ with probability $\Omega(1)$, which establishes Lemma \ref{sub-contraction}.\end{proof}

\subsection{Convergence to typical configurations: the super-critical case}\label{section-sup}

\begin{lemma} \label{final-drift} Let $\lambda > \lambda_S$ and $\Delta_t := |L_1(X_t) - \theta_r n|$. If $X_0$ has at most one large component, then for some $T=O(\log n)$ there exists a constant $c > 0$ such that $\Pr[\,\Delta_T > A \sqrt{c n}\,] < 1/A$ for all $A \!>\! 0$.
\end{lemma}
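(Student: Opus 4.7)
The plan is to adapt the structure of Lemma \ref{sub-contraction} to the super-critical regime, contracting $\Delta_t = |L_1(X_t) - \gcrc n|$ toward $0$ rather than driving $L_1(X_t)$ itself to $O(\log n)$. The heart of the argument is a one-step drift bound of the form $\E[\Delta_{t+1} \mid X_t] \le (1-\delta/q)\Delta_t + C\sqrt{n}$ for constants $\delta, C > 0$. Iterating this over $T = \Theta(\log n)$ steps gives $\E[\Delta_T] \le \sqrt{cn}$ for some constant $c > 0$, and Markov's inequality then yields $\Pr[\Delta_T > A\sqrt{cn}] < 1/A$.

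First, mimicking Lemma \ref{unique-gc} and Fact \ref{no-large-comp}, I would argue that throughout the $T$ steps the configuration $X_t$ contains at most one large component with probability $\Omega(1)$: once uniqueness holds, Lemma \ref{rg-large} applied after activation and sub-critical percolation on the non-activated side together rule out the creation of an additional large component per step with probability $1-O(n^{-1/12})$. I condition on this event for the rest of the argument.

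For the drift, suppose $X_t$ has a unique large component of size $\theta_t n$ with $\theta_t > 1 - q/\lambda$ (the ``safe'' regime, which contains a neighborhood of $\gcrc$ by Fact \ref{drift-bounds-1}). On the event ${\cal E}_t$ that ${\cal L}(X_t)$ is activated (probability $1/q$), a Hoeffding bound as in Fact \ref{no-large-comp}(ii) concentrates $A_t$ around $(\theta_t + (1-\theta_t)/q)n$ with additive $O(\xi)$ error; by (\ref{phi:definition}) and Corollary \ref{cor:rg-sup}, the largest component in the re-percolated active subgraph has expected size $\phi(\theta_t)n + O(\sqrt{n})$. On $\neg {\cal E}_t$ the non-activated side percolates sub-critically, so $L_1(X_{t+1}) = \theta_t n$ with high probability. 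Combining,
\[
\E[L_1(X_{t+1}) \mid X_t] = \Bigl(1-\tfrac{1}{q}\Bigr)\theta_t n + \tfrac{1}{q}\phi(\theta_t) n + O(\sqrt{n}).
\]
Subtracting $\gcrc n$ and applying Lemma \ref{drift-bounds}(ii),(iii) (which place $\phi(\theta_t)$ between $\theta_t$ and $\gcrc$ with $|\phi(\theta_t)-\gcrc| \le (1-\delta)|\theta_t - \gcrc|$) together with the $O(\sqrt{n})$ concentration of $L_1(X_{t+1})$ around its mean supplied by Lemma \ref{lemma:rg-sup} delivers the claimed contraction on $\Delta_t$.

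The main obstacle is the ``bad'' initial regime $\theta_t \le 1 - q/\lambda$ (including the sub-case where $X_t$ has no large component), in which the non-activated side is itself super-critical and may spawn a giant whose size is not directly governed by $\phi$. Following the monotonicity strategy used near $\ts$ in the proof of Lemma \ref{sub-contraction}, I would upper-bound the non-contractive contribution by evaluating at the boundary $\theta_t = 1 - q/\lambda$ (or $\theta_t = 0$), show that the resulting one-step drift of $\theta_t$ toward $\gcrc$ is at least $\Omega(1)$, and invoke Lemma \ref{lemma:hitting-times-sub} to conclude that the process enters the safe regime within $O(1)$ steps with probability $\Omega(1)$. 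Restarting the contraction analysis from there preserves the $O(\log n)$ total step count, and the concluding Markov estimate then yields the desired tail bound.
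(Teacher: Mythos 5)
Your overall architecture --- a burn-in into a ``safe'' regime, a one-step contraction of $\Delta_t$ with an additive $O(\sqrt n)$ error, iteration over $O(\log n)$ steps and a final Markov bound --- matches the paper's. However, there is a concrete gap in how you obtain the $O(\sqrt n)$ error term. You maintain only the property that $X_t$ has at most one large component, i.e.\ $L_2(X_t) < 2n^{11/12}$, and then invoke a Hoeffding bound ``as in Fact \ref{no-large-comp}(ii)''. With that amount of second-moment information the fluctuation of the number of activated vertices around $\mu_t = L_1(X_t)+\frac{n-L_1(X_t)}{q}$ is only controlled at scale $\xi = \sqrt{2n^{23/12}\log n}\approx n^{23/24}$, not $\sqrt n$; correspondingly Corollary \ref{cor:rg-sup}, whose error is $|m|+O(\sqrt n)$ with $|m|$ the deviation in the vertex count, only yields $\E[L_1(X_{t+1})\mid X_t,{\cal E}_t] = \phi(\theta_t)n + O(\xi)$, not $+\,O(\sqrt n)$ as you assert. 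Iterating your drift inequality would then give $\E[\Delta_T] = O(n^{23/24}\sqrt{\log n})$, far too weak for a tail bound at scale $A\sqrt{cn}$. The paper closes exactly this hole by first proving (Fact \ref{sup-first-drift}) that with probability $\Omega(1)$ the chain reaches, and then for $O(\log n)$ steps w.h.p.\ preserves, the stronger structural properties $L_2(X_t)=O(\log n)$ and $\sum_{j\ge 2}L_j(X_t)^2 = O(n)$ (the latter via Lemma \ref{rg-suscept} and a separate contraction argument for $Z_t=\sum_{j\ge 2}L_j(X_t)^2$), and then converting the resulting Gaussian-type tail for $M_t = A_t - \mu_t$ into the expectation bound $\E[|M_t|\mid X_t,{\cal E}_t]=O(\sqrt n)$ (Fact \ref{claim:exp-m}). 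Note also that Corollary \ref{cor:rg-sup} is stated for a deterministic vertex surplus $m$; to apply it with the random $M_t$ you need precisely this expectation bound, not merely a with-high-probability concentration statement.

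A second, smaller issue is your treatment of the bad regime $\theta_t \le 1-q/\lambda$: there the unconditional drift of $L_1$ is delicate, because when ${\cal L}(X_t)$ is \emph{inactive} the activated subgraph is itself super-critical and spawns a competing giant, so it is not clear that the one-step drift toward $\gcrc$ is $\Omega(1)$ without further argument. The paper sidesteps this by conditioning on the constant-probability event that ${\cal L}(X_t)$ is activated in each of $K=O(1)$ consecutive steps and only then applying Lemma \ref{lemma:hitting-times-sub}; this is legitimate because the lemma only claims success probability $\Omega(1)$ (routed through Fact \ref{sup-first-drift}). Your sketch of ``evaluate at the boundary and get $\Omega(1)$ drift'' would need to be fleshed out along these lines.
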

\noindent
Let $\xi(r) = \sqrt{n r \log n}$, $\tS := 1 - q/\lambda$ and $\mu_t = L_1(X_t)+\frac{n-L_1(X_t)}{q}$. The following facts, which we prove later, will be used in the proof. 
\begin{fact} \label{sup-first-drift} If $X_0$ has at most one large component, then there exists $T=O(\log n)$ such that with probability $\Omega(1)$: $L_1(X_T)> (\tS+\varepsilon)n$, $L_2(X_T) = O(\log n)$ and $\sum_{j \ge 2} L_j(X_T)^2 = O(n)$. Moreover, once these properties are obtained they are preserved for a further $T' = O(\log  n)$ CM steps w.h.p.
\end{fact}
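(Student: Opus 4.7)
The plan is to mirror the sub-critical drift argument of Lemma~\ref{sub-contraction} but with $L_1(X_t)$ increasing toward $\gcrc n$. I split the proof into two phases — first growing the giant past $(\tS+\varepsilon)n$, then simplifying the non-giant structure — followed by a preservation argument.

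\textbf{Phase 1 (growing the giant).} When ${\cal L}(X_t)$ is activated (probability $1/q$), the active random graph $G(A_t,p)$ is super-critical since $\lambda > \lS = q$ implies its branching parameter is $\lambda A_t/n \ge \lambda/q > 1$ for all $\theta_t \ge 0$; Corollary~\ref{cor:rg-sup} together with Hoeffding concentration of $A_t$ then gives $L_1(X_{t+1}) = \phi(\theta_t)n + o(n)$ on this event. When ${\cal L}(X_t)$ is inactive, it survives intact so $L_1(X_{t+1}) \ge L_1(X_t)$. By Lemma~\ref{drift-bounds}(iii), $\phi(\vf) - \vf \ge \delta(\gcrc - \vf)$ for $\vf < \gcrc$, so the conditional expected drift satisfies
\[
\E[\gcrc n - L_1(X_{t+1}) \mid X_t] \le (1 - \delta/q)\bigl(\gcrc n - L_1(X_t)\bigr) + o(n).
\]
Iterating this geometric contraction for $T_1 = O(\log n)$ steps and applying Markov's inequality yields $\theta_{T_1} > \tS + \varepsilon$ with probability $\Omega(1)$.

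\textbf{Phase 2 (simplifying the non-giant structure).} Starting from $X_{T_1}$, I would run the dynamics for an additional $T_2 = O(\log n)$ steps. Two observations drive the analysis. First, because $\theta_t > \tS$ throughout Phase 2 (which I would verify via a supermartingale argument exploiting Lemma~\ref{drift-bounds}(ii)), every newly formed component at each step has size $O(\log n)$: if ${\cal L}(X_t)$ is inactive, the complement percolation is sub-critical (Lemma~\ref{rg-sub-1}); if ${\cal L}(X_t)$ is active, the super-critical active graph has $L_2 = O(\log n)$ by Lemma~\ref{lemma:rg-sup} and sum of squares $O(n)$ on its non-giant part by Lemma~\ref{rg-suscept}. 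Second, any ``legacy'' non-giant component surviving from $X_{T_1}$ (each of size at most $2n^{11/12}$) persists from step to step only while inactive, and is activated with probability $1/q$ at each step independently. After $T_2 = Cq\log n$ steps, any specific legacy has been activated at least once with probability $1 - n^{-C'}$, and a union bound over the at most $n$ legacy components shows all have been dispersed by time $T := T_1 + T_2$ w.h.p. Once dispersed, a legacy's vertices rejoin either the current giant or $O(\log n)$-sized components, yielding $L_1(X_T) > (\tS+\varepsilon)n$ and $L_2(X_T) = O(\log n)$ with probability $\Omega(1)$. The sum-of-squares bound requires a finer accounting: the expected contribution to $\sum_{j\ge 2}L_j(X_T)^2$ from legacies originating in the active graph at step $T-k$ is at most $(1-1/q)^k\cdot O(n)$ by Lemma~\ref{rg-suscept}, and summing this geometric series over $k$ gives $O(n)$ in expectation, then Markov's inequality concludes.

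\textbf{Preservation.} Once the three properties hold, a single CM step preserves them: the active graph is a super-critical $G(A_t,p)$ satisfying the required bounds by Lemmas~\ref{rg-sub-1}, \ref{lemma:rg-sup} and~\ref{rg-suscept}, and the inactive parts are already $O(\log n)$-sized by hypothesis. A union bound over $T' = O(\log n)$ steps gives preservation w.h.p. The main technical obstacle is the legacy-dispersal argument in Phase 2: the identities of the ``large'' components shift from step to step and the random-graph events at different steps are not independent, so making the union bound rigorous requires careful conditioning on the high-probability event that $\theta_t$ stays near $\gcrc$ throughout the window, together with a Chernoff bound on the number of activations received by each legacy component.
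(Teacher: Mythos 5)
Your Phase~1 contains a genuine gap, and it is exactly the difficulty the paper's proof is built to avoid. Your geometric-contraction bound
$\E[\gcrc n - L_1(X_{t+1}) \mid X_t] \le (1-\delta/q)(\gcrc n - L_1(X_t)) + o(n)$
relies on Hoeffding concentration of $A_t$ at scale $o(n)$ and on $\E[L_1(X_{t+1})\mid X_t,{\cal E}_t] = \phi(\theta_t)n + o(n)$, both of which require that $X_t$ still has at most one large component (so that $\sum_{j\ge 2}L_j(X_t)^2 = o(n^2/\log n)$). But during your unconditioned $O(\log n)$-step run, whenever $\theta_t < \tS$ and ${\cal L}(X_t)$ is \emph{not} activated (a constant-probability event at every step, and $\theta_t<\tS$ is precisely the situation at the start of the phase), the active subgraph has roughly $(1-\theta_t)n/q$ vertices and branching parameter $(1-\theta_t)\lambda/q > 1$, so the percolation step creates a \emph{second} component of linear size w.h.p. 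From then on $A_t$ fluctuates by $\Theta(n)$ depending on whether that second component is activated, the identification of the new giant with $\phi(\theta_t)n$ fails by $\Theta(n)$, and the $o(n)$ error term in your contraction becomes $\Theta(n)$; your supermartingale control of $\theta_t$ is only invoked in Phase~2, after the crossing, so it does not repair this. (There is also a smaller wrinkle: Markov's inequality is applied to $\gcrc n - L_1(X_{T_1})$, which can be negative, so you would need to control its positive part, again via concentration that presupposes the single-large-component structure.)

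The paper's route avoids all of this by exploiting that for $\theta_t \le \tS + 2\varepsilon$ the drift conditioned on activation is a \emph{constant rate}: by Lemma~\ref{drift-bounds} and Fact~\ref{drift-bounds-1}, $\phi(\theta_t)-\theta_t \ge \delta(\theta_r - \tS - 2\varepsilon) = \delta' > 0$, so each activated step pushes $L_1$ up by $\Omega(n)$ in expectation and only $K = O(1)$ steps are needed (via the hitting-time Lemma~\ref{lemma:hitting-times-sub}). One then conditions on the event ${\cal H}_K$ that ${\cal L}(X_t)$ is activated in \emph{every} one of these constantly many steps, which has probability $q^{-K} = \Omega(1)$; on this event the active graph is always super-critical with a single giant, no second large component ever appears, and the $(\tS+\varepsilon)n$ threshold is reached with probability $\Omega(1)$, after which a per-step maintenance bound (Fact~\ref{sup-no-large-comp} plus Lemma~\ref{lemma:rg-sup}) keeps $L_1$ above the threshold w.h.p. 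Your Phase~2 treatment of $L_2$ (legacy components die at rate $1/q$ per step, new components are $O(\log n)$) matches the paper, and your geometric-series accounting for $\sum_{j\ge2}L_j^2$ is a legitimate alternative to the paper's direct contraction of $Z_t=\sum_{j\ge2}L_j(X_t)^2$; but without repairing Phase~1 — either by adopting the constant-step conditioning or by proving a drift estimate that is robust to the presence of a second linear-size component — the proof as proposed does not go through.
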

\begin{fact} \label{sup-no-large-comp} Assume $X_t$ has exactly one large component and all its other components have size at most $r < 2 n^{11/12}$. Then, for a small constant $\varepsilon > 0$ and sufficiently large $n$, each of the following holds with probability $1 - O\left(n^{-1}\right)$:
\begin{enumerate}[(i)]
	\item If ${\cal L}(X_t)$ is inactive and $L_1(X_t) > (\tS + \varepsilon)n$, then $L_1(X_{t+1}) = O(\log n)$.
	
	\item If ${\cal L}(X_t)$ is active, then $A_t \in J_{t,r} := \left[\mu_t-\xi(r),\mu_t+\xi(r)\right]$ and $G(A_t,p)$ is a super-critical random graph.
\end{enumerate}
\end{fact}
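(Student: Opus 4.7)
The plan is to prove both parts by pinning down the number of active vertices $A_t$ via Hoeffding's inequality, and then invoking the sub-/super-critical random graph results from Section~\ref{subsection:random-graphs}. The key observation is that, conditional on $X_t$ with components of sizes $L_1(X_t), L_2(X_t), \dots$, we can write $A_t = \mathbb{1}[{\cal E}_t] L_1(X_t) + \sum_{j\ge 2} L_j(X_t) Z_j$, where the $Z_j$ are independent Bernoulli$(1/q)$ indicators. Since $L_j(X_t) \le r$ for $j \ge 2$, the bounded-range sum has $\sum_{j\ge 2}(L_j(X_t))^2 \le r \sum_{j\ge 2} L_j(X_t) \le rn$, so Hoeffding with $\delta = \xi(r) = \sqrt{nr\log n}$ yields
\[
\Pr\!\left[\,|A_t - \E[A_t \mid X_t, {\cal E}_t \text{ or } \neg{\cal E}_t]| > \xi(r)\,\bigm|\,X_t\,\right] \;\le\; 2\exp\!\left(-\tfrac{2nr\log n}{rn}\right) \;=\; 2/n^2.
\]

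For part (ii), the conditional expectation $\E[A_t \mid X_t, {\cal E}_t] = \mu_t$, so the concentration above immediately gives $A_t \in J_{t,r}$ with probability $1-O(n^{-2})$. To verify super-criticality, note $\mu_t = L_1(X_t)(1 - 1/q) + n/q \ge n/q$, so $A_t\lambda/n \ge (\mu_t-\xi(r))\lambda/n \ge \lambda/q - O(\sqrt{r\log n/n})$; since $\lambda > q$ and $r = o(n)$, this is $1 + \Omega(1)$, so $G(A_t, p)$ is super-critical (indeed with mean degree bounded away from~$1$, which is the form needed to apply Lemma~\ref{lemma:rg-sup}).

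For part (i), when ${\cal L}(X_t)$ is inactive we instead have $\E[A_t \mid X_t, \neg{\cal E}_t] = (n - L_1(X_t))/q$, so the same Hoeffding bound yields $A_t \le (n - L_1(X_t))/q + \xi(r)$ with probability $1 - O(n^{-2})$. Using $L_1(X_t) > (\tS + \varepsilon)n = (1 - q/\lambda + \varepsilon)n$,
\[
\frac{A_t \lambda}{n} \;<\; \frac{\lambda}{q}\!\left(\frac{q}{\lambda} - \varepsilon\right) + O\!\left(\sqrt{\tfrac{r\log n}{n}}\right) \;=\; 1 - \frac{\varepsilon\lambda}{q} + o(1),
\]
which is bounded strictly below $1$ for small $\varepsilon$. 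Hence the percolation on the active vertices is sub-critical with the edge density bounded away from~$1$, so Lemma~\ref{rg-sub-1} gives that the largest component formed among the active vertices is $O(\log n)$ with probability $1 - O(n^{-1})$; together with the surviving (inactive) ${\cal L}(X_t)$, this gives the stated bound on new components in $X_{t+1}$.

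There is no real obstacle here beyond carefully verifying that the boundary conditions $\lambda > q$ and $L_1(X_t) > (\tS + \varepsilon)n$ translate, after accounting for the $\xi(r)$ fluctuations, into edge densities bounded away from~$1$ on the correct side; the Hoeffding step is routine thanks to the assumption $L_j(X_t) \le r$, which is exactly what limits the variance of $A_t$.
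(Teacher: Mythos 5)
Your proof is correct and follows essentially the same route as the paper: Hoeffding's inequality to concentrate $A_t$ around its conditional mean (using $\sum_{j\ge 2}L_j(X_t)^2 \le rn$ to control the variance proxy), then the identity $q^{-1}(1-\Theta_{\rm S})\lambda=1$ to place the active-subgraph edge density strictly below $1$ for part~(i) and strictly above $1$ for part~(ii), and finally Lemma~\ref{rg-sub-1} (resp.\ super-criticality) to conclude. You also correctly read the conclusion of part~(i) as a bound on the sizes of new components created by the percolation step (so that ${\cal L}(X_{t+1})={\cal L}(X_t)$), which is how the paper actually uses it, even though the statement as written says ``$L_1(X_{t+1})=O(\log n)$.''
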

\begin{proof}[Proof of Lemma \ref{final-drift}] 
We show that one step of the CM dynamics contracts $\Delta_t$ in expectation. Observe that by Fact \ref{sup-first-drift} we may assume $X_0$ is such that $L_1(X_0)> (\tS+\varepsilon)n$, $L_2(X_0) = O(\log n)$ and $\sum_{j \ge 2} L_j(X_0)^2 = O(n)$, and that $\{X_t\}$ retains these properties for the $O(\log n)$ steps of this phase w.h.p. Consequently, if ${\cal L}(X_t)$ is inactive, then $L_1(X_{t+1}) = L_1(X_t)$ with probability $1-O(n^{-1})$ by Fact \ref{sup-no-large-comp}(i). Hence,
\begin{equation}
\E[\Delta_{t+1} \mid X_t,\neg{\cal E}_t] \le \E\left[\,\lvert L_1(X_{t+1})-L_1(X_{t})\rvert \mid X_t,\neg{\cal E}_t\right] + |L_1(X_{t})-\theta_r n| \le \Delta_t + O(1).  \label{sup:inactive-bound}
\end{equation}

To bound $\E[\Delta_{t+1} \,|\, X_t,{\cal E}_t]$, let $M_t = A_t - \mu_t$ and let $\ell_t(m)$ denote the size of the largest component of a $G(\mu_t+m,p)$ random graph. Also, let $\Delta_{t+1}' := |L_1(X_{t+1})-\phi(\theta_t) n|$. Note that, conditioned on $M_t = m$, $L_1(X_{t+1})$ and $\ell_t(m)$ have the same distribution. Moreover, if $A_t \in J_{t,r}$ then $M_t \in J_{t,r}' := [-\xi(r),\xi(r)]$. Hence, Fact \ref{sup-no-large-comp}(ii) with $r=O(\log n)$ implies 
\begin{eqnarray}
E[\Delta_{t+1}' \mid X_{t},{\cal E}_t] 
&\le& \sum_{m \in J_{t,r}'} \E[\Delta_{t+1}'\mid X_t,{\cal E}_t,M_t = m]~\Pr[M_t=m \mid X_t,{\cal E}_t] + O(1) \nonumber\\
&=& \sum_{m \in J_{t,r}'} \E[|\ell_t(m)-\phi(\theta_t) n|]\,\Pr[M_t=m \mid X_t,{\cal E}_t] + O(1). \nonumber
\end{eqnarray}
Now, by Fact \ref{sup-no-large-comp}(ii), $G(\mu_t+m,p)$ is a super-critical random graph, and thus $\E[|\ell_t(m)-\phi(\theta_t) n|] \le |m| + O(\sqrt{n})$  by Corollary \ref{cor:rg-sup}. Hence, 
\begin{equation}
E[\Delta_{t+1}' \mid X_{t},{\cal E}_t] \le \E[|M_t| \mid X_t,{\cal E}_t] + O(\sqrt{n}). \nonumber
\end{equation}
The following fact, which we prove later, follows straightforwardly from Hoeffding's inequality since $\sum_{j \ge 2} L_j(X_t)^2 = O(n)$. 

\smallskip
\begin{fact}\label{claim:exp-m} 
	$\E[\,|M_t|~|X_t,{\cal E}_t] = O(\sqrt{n})$.
\end{fact}
\smallskip

\noindent
Hence, 
\begin{equation}\label{equation:sup-crit:d1}
E[\Delta_{t+1}' \,|\, X_{t},{\cal E}_t] = O(\sqrt{n}),
\end{equation}
and the triangle inequality implies
\begin{equation}
\E[\Delta_{t+1}\mid X_{t},{\cal E}_t] \,\le\, \E[\Delta'_{t+1} \mid X_{t},{\cal E}_t] + |\theta_r-\phi(\theta_t)|n
\,\le\, |\theta_r - \phi(\theta_t)|n + O(\sqrt{n}). \label{bound:sc-exp-active-upper}
\end{equation} 
Putting (\ref{sup:inactive-bound}) and (\ref{bound:sc-exp-active-upper}) together, we have
\begin{equation}
\E[\Delta_{t+1}~|~X_t] \le \left(1-\frac{1}{q}\right)\Delta_t + \frac{|\theta_r - \phi(\theta_t)|n}{q} + O(\sqrt{n}). \nonumber
\end{equation}
By Lemma \ref{drift-bounds}(iii), there exists a constant $\delta \in (0,1)$ such that $\delta|\theta_t-\theta_r| \le |\theta_t-\phi(\theta_t)|$. Together with Lemma \ref{drift-bounds}(ii), this implies $|\theta_r-\phi(\theta_t)| \le (1-\delta)|\theta_t-\theta_r|$. Thus, there exists a constant $\delta' > 0$ such that 
\[\E[\Delta_{t+1}~|~ X_t] \le (1-\delta')\Delta_t  + \xi\]
where $\xi = O(\sqrt{n}$). Inducting,
\[\E[\Delta_t] \le (1-\delta')^t \Delta_0 + \xi/\delta'.\]
Hence, for some $t = O(\log n)$, $\E[\Delta_t] = O(\sqrt{n})$ and so Markov's inequality implies 
\[\Pr\left[\Delta_t > A \sqrt{c n}\right] \le 1/A\]
for some constant $c>0$ and any $A > 0$, which concludes the proof of Lemma \ref{final-drift}. \end{proof}

\noindent
We conclude this section with the proofs of Facts \ref{sup-first-drift}, \ref{sup-no-large-comp} and \ref{claim:exp-m}.

\begin{proof}[Proof of Fact \ref{sup-first-drift}:] 
	
This proof is similar to that of Fact \ref{no-large-comp}. If ${\cal L}(X_t)$ is inactive, then Hoeffding's inequality implies that $L_1(X_t) \le \frac{n-L_1(X_t)}{q} + \xi(r)$ with probability $1-O(n^{-2})$. Thus, $G(A_t,p)$ is a sub-critical random graph for constant $\varepsilon > 0$ since $q^{-1}(1-\tS)\lambda = 1$; part (i) then follows from Lemma \ref{rg-sub-1}. 

Part(ii) follows in similar fashion. If ${\cal L}(X_t)$ is active, then $L_1(X_t) \in J_{t,r}$ with probability $1-O(n^{-2})$ by Hoeffding's inequality. Hence, $\frac{\lambda}{n}(\mu_t - \xi(r)) > \frac{\lambda}{q}-o(1) > 1$ for sufficiently large $n$, which implies part (ii). 
%Part $(iv)$ follows from Lemma \ref{} and Part $(vii)$ follows from Lemma \ref{lemma:rg-sup}.
\end{proof}

\begin{proof}[Proof of Fact \ref{claim:exp-m}:] Let $W_t$ be a random variable distributed according to the conditional distribution of $|M_t|$ given $X_t$ and ${\cal E}_t$. Since $\sum_{j \ge 2} L_j(X_t)^2 = O(n)$, Hoeffding's inequality implies that there exists a constant $c$ such that $\Pr[W_t > a\sqrt{n}] \le 2 \exp (- c a^2)$ for every $a>0$. Observe also that 
	\[W_t = \sum_{k=0}^n \1(W_t \ge k+1) + \1(k+1 > W_t > k)(W_t-k).\] Therefore,
	\[\E[W_t] \le \sum\limits_{k=0}^n \Pr[W_t > k] \le 1 + 2 \sum\limits_{k=1}^n e^{- \frac{c k^2}{n}} \le 1 + 2\int\limits_{0}^{\infty} e^{- \frac{c x^2}{n}} dx = O(\sqrt{n}),\]
	as desired.\end{proof}

\begin{proof}[Proof of Fact \ref{sup-first-drift}:] Let $d_t := (\tS + 2\varepsilon) n - L_1(X_t)$. Then,
	\begin{eqnarray}
	\E[d_{t+1} \mid X_t,{\cal E}_t] = d_t + \theta_t n - \E[L_1(X_{t+1})\;|\;X_t,{\cal E}_t]. \label{sup-d-drift}
	\end{eqnarray}
	
	Let $h^-(\theta_t) = \theta_t n + (1-\theta_t)q^{-1} n - \xi(r)$ and let $\ell^-(\theta_t)$ be a random variable distributed as the size of the largest component of a $G(h^-(\theta_t),p)$ random graph. If ${\cal L}(X_t)$ is activated, Fact \ref{sup-no-large-comp}(ii) implies that $A_t \in J_{t,r}$ with probability $1 - O\left(n^{-1}\right)$ where $r < 2 n^{11/12}$. Therefore,
	\begin{eqnarray}
	\E[L_1(X_{t+1})\;|\;X_t,{\cal E}_t] &\ge& \sum_{a \in J_{t,r}} \E[L_1(X_{t+1})\;|\;X_t,{\cal E}_t,A_t=a] \Pr[A_t = a\;|\;X_t,{\cal E}_t] \nonumber\\
	&\ge& \E[L_1(X_{t+1})\;|\;X_t,{\cal E}_t,A_t=h^-(\theta_t)] - \Omega(1)
	= \E[\ell^-(\theta_t)] - \Omega(1) \nonumber
	%&\ge& \phi(\theta_t) n - \Omega(\xi(r)) \nonumber\label{sup-lower-contract}
	\end{eqnarray}
	Note that $G(h^-(\theta_t),p)$ is a super-critical random graph since $\lambda > q$. Hence, Corollary \ref{cor:rg-sup} implies that $\E[L_1(X_{t+1})\;|\;X_t,{\cal E}_t] \ge \phi(\theta_t) n - \Omega(\xi(r))$. Plugging this bound into (\ref{sup-d-drift}), we have
	\begin{eqnarray}
	\E[d_{t+1}\;|\;X_t,{\cal E}_t] &\le& d_t + (\theta_t - \phi(\theta_t))n + O(\xi(r)). \nonumber
	\end{eqnarray}
	
	Now, by Fact \ref{drift-bounds-1}, $\theta_r > \tS$. Thus, when $d_t > 0$, Lemma \ref{drift-bounds} implies that there exists a constant $\delta \in (0,1)$ such that $\phi(\theta_t) - \theta_t > \delta(\theta_r-\theta_t) > \delta(\theta_r-\tS-2\varepsilon) = \delta'$, where $\delta'$ is a constant in $(0,1)$ for a sufficiently small $\varepsilon$. Moreover, $\xi(r) = o(n)$ and thus 
	\begin{equation}
	\E[d_{t+1}-d_t \mid X_t,{\cal E}_t,d_t>0] \le -\delta' n +  O(\xi(r)) \le -\gamma n, \label{super-critical:claim:drift}
	\end{equation}	
	for some constant $\gamma > 0$.
	
	Assuming $d_0 >0$, let $\tau = \min\{t > 0: d_t \le 0 \}$ and let ${\cal H}_K$ be the event that ${\cal L}(X_t)$ is activated for all $t \in [0,K]$, where $K$ is a fixed constant we choose later. Let $\hat{T} := \min\{\tau,K\}$ and observe that conditioned on ${\cal H}_K$, (\ref{super-critical:claim:drift}) holds for all $t \le \hat{T}$. Hence, Lemma \ref{lemma:hitting-times-sub} implies $\E[\hat{T}|{\cal H}_K] \le 4/\gamma$, and by Markov's inequality we have 
	\[\Pr[\tau \le K/2 \mid {\cal H}_K ] \ge  \Pr[\hat{T} \le K/2 \mid {\cal H}_K] \ge 1 - \frac{8}{\gamma K}.\]
	Since the event ${\cal H}_K$  occurs with constant probability $q^{-K}$, we have $L_1(X_T) \ge (\tS+2\varepsilon)n$ with probability $\Omega(1)$ for some $T=O(1)$.
	
	We now show that if $L_1(X_0) > (\tS+\varepsilon)n$, then $L_1(X_1) > (\tS+\varepsilon)n$ with probability $1-O(n^{-1})$. A union bound then implies $L_1(X_t) > (\tS+\varepsilon)n$ for all $t \in [0,T]$ with probability $1-O\left(T/n\right)$. If ${\cal L}(X_0)$ is not activated, by Fact \ref{sup-no-large-comp}(i), $L_1(X_1) = L_1(X_0) > (\tS+\varepsilon)n$ with probability $1-O\left(n^{-1}\right)$. Otherwise, if ${\cal L}(X_0)$ is activated, Fact \ref{sup-no-large-comp}(ii) implies that $A_0 \in J_{0,r}$ with probability $1-O(n^{-1})$. Conditioning on $A_0 \in J_{0,r}$, $L_1(X_1) \succeq \ell^-(\theta_0)$ and by Lemma \ref{lemma:rg-sup},
	\[\Pr[L_1(X_1) < \phi(\theta_0)n-2\xi(r)|A_0 \in J_{0,r}] \le \Pr[\ell^-(\theta_0) < \phi(\theta_0)n-2\xi(r)] = O(n^{-1}). \]
	Lemma \ref{drift-bounds} and Fact \ref{drift-bounds-1} imply $\phi(\theta_0) n-2\xi(r) > (\tS+\varepsilon)n$ for sufficiently large $n$ since $\xi(r) = o(n)$. Hence, $L_1(X_1) > (\tS+\varepsilon)n$ with probability $1-O\left(n^{-1}\right)$. This concludes the proof of the first part of Fact \ref{sup-first-drift}.
	
	We show next that $L_2(X_T) = O(\log n)$ w.h.p.\ for some $T=O(\log n)$. For this, we condition on $L_1(X_t) > (\tS+\varepsilon)n$ for $t \in [0,T]$ with $T=O(\log n)$. Then Fact \ref{sup-no-large-comp}, Lemma \ref{lemma:rg-sup} and a union bound imply that every new small component has size $O(\log n)$ with probability $1-O\left(T/n\right)$. The probability that any initial component remains after $T=B \log n$ steps is $O(n^{-1})$ for a sufficiently large constant $B > 0$; therefore, $L_2(X_T) = O(\log n)$ with probability $1 - O\left(\log n/n\right)$. Fact \ref{sup-no-large-comp} and another union bound implies that this property is maintained for an additional $O(\log n)$ steps w.h.p.
	
	Finally, we show that $\sum_{j \ge 2} L_j(X_T)^2 = O(n)$ for some $T=O(\log n)$. Consider the one-dimensional random process $\{Z_t\}$ where $Z_t = \sum_{j \ge 2} L_j(X_t)^2$. At time $t$, the decrease in $Z_t$ as a result of the dissolution of active components is $Z_t/q$ in expectation, and is at least $Z_t/q - o(n)$ with probability $1-O(n^{-1})$ by Hoeffding's inequality. Lemma \ref{rg-suscept} implies that the increase in $Z_t$ as a result of the creation of new components in the percolation step is at most $Cn$ with probability $1-O(n^{-1})$. Therefore,
	\[\E[Z_{t+1}~|~X_t] \le Z_t - \frac{Z_t}{q}+C n + o(n) \le \left(1-\frac{1}{3q}\right)Z_t\]
	provided $Z_t \ge 3 C q n$. Thus, Markov's inequality ensures $Z_T < 3 C q n$ with probability $\Omega(1)$ for some $T=O(\log n)$. Finally, when $Z_t > 3 C q n$, $Z_t$ decreases by at least $3 C n - o(n)$ with probability $1-O(n^{-1})$; therefore, $Z_{t+1} \le Z_t$ with probability $1-O(n^{-1})$. When $Z_t \le 3 C q n$, $Z_{t+1} \le (3q+1)Cn$ with probability $1-O(n^{-1})$. Hence, if $Z_0 \le 3 C q n$, then $Z_t=O(n)$ for $t \in [0,T]$ with $T=O(\log n)$ w.h.p.\end{proof}

%\pagebreak
\subsection{Coupling to the same component structure}\label{upper-sc}

In this section we design a coupling of the CM steps which, starting from two configurations with certain properties (namely those obtained in Sections \ref{upper-sub} and \ref{section-sup} for the sub-critical and super-critical case respectively), quickly converges to a pair of configurations with the same component structure. (We say that two random-cluster configurations $X$ and $Y$ have the same component structure if $L_j(X)=L_j(Y)$ for all $j \ge 1$.) 

The only additional property we will require is that the starting configurations should have a linear number of isolated vertices. Although in Sections \ref{upper-sub} and \ref{section-sup} we do not guarantee this, observe that in the sub-critical (resp., super-critical) case, Fact \ref{no-large-comp} (resp., Fact \ref{sup-no-large-comp}) and Lemma \ref{rg-isolated} imply that a single CM step from a configuration with a unique large component produces a configuration with a linear number of isolated vertices w.h.p. 

We will focus first on the super-critical case, since a simplified version of the arguments works in the sub-critical case. 

\begin{lemma}\label{coupling-scs} Let $\lambda > q$ and let $X_0$ and $Y_0$ be random-cluster configurations such that:
	\begin{enumerate}[(i)]
		\item $I(X_0)$, $I(Y_0) = \Omega(n)$;
		\item $L_2(X_0)$, $L_2(Y_0) = O(\log n)$; 
		\item $|L_1(X_0)-\theta_r n|$, $|L_1(Y_0)-\theta_r n| = O(\sqrt{n}\log^2 n)$; and
		\item $\sum_{j \ge 2} L_j(X_0)^2$, $\sum_{j \ge 2} L_j(Y_0)^2 = O(n)$.
	\end{enumerate}
	Then, there exists a coupling of the CM steps such that $X_T$ and $Y_T$ have the same component structure after $T = O(\log n)$ steps with probability $\Omega(1)$.
\end{lemma}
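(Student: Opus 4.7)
The key structural observation is that one CM step produces $X_{t+1}$ as a disjoint union of the \emph{inactive} components of $X_t$ (those not selected in sub-step~(i)) together with the components of a fresh $G(A_t^X, p)$ random graph on the active vertex set, and similarly for $Y_{t+1}$. Therefore, to couple $X_T$ and $Y_T$ into the same component multiset, it suffices to (a) couple activations so that the inactive multisets coincide, and (b) couple the two active-side random graphs so that their component multisets are identical. My plan is to design a single multi-stage per-step coupling and iterate it for $T = O(\log n)$ steps.

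Per step, I proceed as follows. First, couple the activations of $\mathcal{L}(X_t)$ and $\mathcal{L}(Y_t)$ to be simultaneous: both active with probability $1/q$, both inactive with probability $1-1/q$; the interesting case is ``both active,'' which occurs at least once within $O(1)$ steps with constant probability. Second, for each size class $k \ge 2$ (all of size $O(\log n)$ by hypothesis~(ii)), pair $\min(n_k^X, n_k^Y)$ components across the two chains and couple their activations identically; unpaired components are activated independently. Third, apply Lemma~\ref{bc-str} to couple the binomial counts of active isolated vertices---which have $\Omega(n)$ trials each by hypothesis~(i)---so that the total active set sizes $A_t^X$ and $A_t^Y$ coincide exactly. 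Finally, conditional on $A_t^X = A_t^Y$, couple the two $G(A, p)$ random graphs to be isomorphic, so that they produce identical component multisets.

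For the third stage to succeed with constant probability via Lemma~\ref{bc-str}, the target offset between $A_t^X$ and $A_t^Y$ must be $O(\sqrt{n})$. This offset splits as a deterministic part of order $|L_1(X_t) - L_1(Y_t)|$ plus a random part of order $\sqrt{n}$ from the unpaired non-isolated activations, the latter controlled by Hoeffding using hypothesis~(iv). Since hypothesis~(iii) only yields an initial gap of $O(\sqrt{n}\log^2 n)$, I would first run a preliminary contraction phase of $O(\log n)$ steps that invokes the drift estimate underlying Lemma~\ref{final-drift} to shrink $|L_1(X_t) - L_1(Y_t)|$ to $O(\sqrt{n})$ with constant probability; Lemmas~\ref{rg-isolated}, \ref{lemma:rg-sup}, and \ref{rg-suscept} applied step by step then show that invariants~(i)--(iv) survive this preliminary phase.

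The main obstacle, which I expect to be the hardest part of the proof, is that a single successful ``both-active'' step does not eliminate the inactive-multiset mismatch arising from unpaired small components. I would handle this by iterating the coupling over $T = Cq \log n$ steps: each fixed initial non-isolated component is dissolved within this window with probability $1 - n^{-C}$, so for $C$ large enough every initial non-isolated component is dissolved by time $T$ with constant probability. Components created in intermediate steps by the coupled $G(A, p)$'s are identical in size across the two chains and hence pair automatically and stay synchronized through subsequent activations. Thus after $T = O(\log n)$ steps, with constant probability, every vestige of the original configurations has been replaced by coupled random-graph output plus matched isolated-vertex counts, so $X_T$ and $Y_T$ share the same component structure.
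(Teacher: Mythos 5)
Your overall architecture (pair equal-size components, couple percolation bijectively, use the isolated vertices and Lemma~\ref{bc-str} to equalize the active counts, and wait $O(\log n)$ steps for every initial component to be activated) matches the paper's, but there is a genuine quantitative gap at its core. You demand that the binomial correction make $A_t^X=A_t^Y$ \emph{exactly at every step}, with a per-step offset that you yourself estimate as $\Theta(\sqrt n)$: the random part of the offset comes from the independently activated \emph{unmatched} components, whose total squared size is only bounded by $O(n)$ (hypothesis (iv)) and can genuinely be $\Theta(n)$ (e.g.\ $X_0$ made of size-$2$ components where $Y_0$ has size-$3$ components), so Hoeffding gives fluctuations of order $\sqrt n$, not smaller. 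By Lemma~\ref{bc-str}, correcting an offset $y=a\sqrt n$ using $\Omega(n)$ isolated vertices succeeds only with a constant probability $1-\gamma a<1$, not $1-o(1)$. Since your argument needs the synchronization to survive the full window of $T=\Theta(\log n)$ steps (every activation of the giants, and every step whose output is still present at time $T$, must be a ``successful'' step), the probability that all required steps succeed is $(1-\Omega(1))^{\Theta(\log n)}=n^{-\Omega(1)}$, not $\Omega(1)$; and your proposal has no mechanism to recover from a failed step, each of which produces differing giants and fresh mismatched small components. Your preliminary phase only shrinks $|L_1(X_t)-L_1(Y_t)|$, which does nothing about this unmatched small-component mass.

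The missing idea is the paper's first coupling stage: before attempting exact equality of active counts, run a contraction phase in which the percolation on the two active sets is coupled through an arbitrary bijection of equal-size portions (letting the leftover $O(\sqrt n\log^2 n)$ active vertices and the cut edges percolate independently), and show that the total unmatched non-giant vertex mass $d_t$ satisfies $\E[d_{t+1}\mid{\cal F}_t]\le(1-\tfrac1{2q})d_t+O(\sqrt n\log^3 n)$, hence drops to $n/\log^4 n$ within $O(\log\log n)$ steps. Only after this (and after one constant-probability step that makes the two largest components exactly equal, so the deterministic part of the offset vanishes) is the per-step offset $O(\sqrt n/\log n)$, so Lemma~\ref{bc-str} succeeds with probability $1-O(1/\log n)$ per step and a union bound over the $\Theta(\log n)$ steps needed to dissolve all initial components gives overall success probability $\Omega(1)$. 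Without this two-scale reduction of the error before insisting on exact active-count matching, the iteration you describe cannot be pushed through.
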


\begin{proof} First we make certain that properties (i) to (iv) are preserved throughout this phase w.h.p. By Fact \ref{sup-first-drift}, (ii) and (iv) are maintained w.h.p.\ for $O(\log n)$ steps. Also, it follows from Lemma \ref{final-drift} (with $A = O(\log ^2 n)$) and a union bound that (iii) is preserved for $O(\log n)$ steps w.h.p. Finally, by Fact \ref{sup-no-large-comp}, if a configuration has properties (ii) and (iii), then the number of active vertices is $\Omega(n)$ with probability $1-O(n^{-1})$; Lemma \ref{rg-isolated} and a union bound then imply that (i) is preserved w.h.p.\ for $O(\log n)$ steps. Hence, we can assume that these properties are maintained throughout the $O(\log n)$ steps of this phase.
	
	% Observe also that $A=2$ in  (\ref{sup:final-markov}) implies $|L_1(X_t)-L_1(Y_t)| = O(\sqrt{n})$ with constant probability for any $t$. 
	
	Our coupling will be a composition of three couplings. Coupling I contracts a certain notion of distance between $\{X_t\}$ and $\{Y_t\}$. This contraction will boost the probability of success of the other two couplings. Coupling II is a one-step coupling which guarantees that the largest components from $\{X_t\}$ and $\{Y_t\}$ have the same size with probability $\Omega(1)$. Coupling III uses the binomial coupling from Lemma \ref{bc-str} to achieve two configurations with the same component structure with probability $\Omega(1)$. 
	
	{\bf Coupling I:}
	%notation
	Excluding ${\cal L}(X_t)$ and ${\cal L}(Y_t)$, consider a maximal matching $W_t$ between the components of $X_t$ and $Y_t$ with the restriction that only components of equal size are matched to each other. Let $M(X_t)$ and $M(Y_t)$ be the components in the matching from $X_t$ and $Y_t$ respectively. Let $D(X_t)$ and $D(Y_t)$ be the complements of ${\cal L}(X_t) \cup M(X_t)$ and ${\cal L}(Y_t) \cup M(Y_t)$ respectively, and let $d_t = |D(X_t)| + |D(Y_t)|$ where $|\cdot|$ denotes the total number of vertices in the respective components.
	
	%coupling description
	The activation of the components in $M(X_t)$ and $M(Y_t)$ is coupled using the matching $W_t$. That is, $c \in M(X_t)$ and $W_t(c) \in M(Y_t)$ are activated simultaneously with probability $1/q$. The activations of ${\cal L}(X_t)$ and ${\cal L}(Y_t)$ are also coupled, and the components in $D(X_t)$ and $D(Y_t)$ are activated independently. Let $A(X_t)$ and $A(Y_t)$ denote the set of active {\it vertices} in $X_t$ and $Y_t$ respectively, and w.l.o.g. assume $|A(X_t)| \ge |A(Y_t)|.$ Let $R_t$ be an arbitrary subset of $A(X_t)$ such that $|R_t|=|A(Y_t)|$ and let $Q_t=A(X_t)\setminus R_t$. The percolation step is coupled by establishing an arbitrary vertex bijection $b_t: R_t \rightarrow A(Y_t)$ and coupling the re-sampling of each edge $(u,v) \in R_t \times R_t$ with $(b_t(u),b_t(v)) \in A(Y_t) \times A(Y_t).$ Edges within $Q_t$ and in the cut $C_t = R_t \times Q_t$ are re-sampled independently. The following claim establishes the desired contraction in $d_t$.
	
		\begin{claim}\label{coup-I} Let $\omega(n) = n/\log^4 n$; after $T = O(\log \log n)$ steps, $d_{T} \le \omega(n)$ w.h.p.
		\end{claim}
		
		\begin{proof} Let $D_a(X_t)$ and $D_a(Y_t)$ be the number of active vertices from $D(X_t)$ and $D(Y_t)$ respectively, and let ${\cal F}_t$ be the history of the first $t$ steps. Observe that Coupling I guarantees that $R_t$ and $A(Y_t)$ will have the same component structure internally. However, the vertices in $Q_t$ will contribute to $d_{t+1}$ unless they are part of the new large component, and each edge in $C_t$ could increase $d_{t+1}$ by at most (twice) the size of one component of $R_t$, which is $O(\log n)$. Thus,
			\begin{equation}\label{exp-contract}
			\E[d_{t+1}\,|\,A(X_t),A(Y_t),C_t,{\cal F}_t] \le d_t - (|D_a(X_t)|+|D_a(Y_t)|) + |Q_t| + 2|C_t| \times O(\log n). 
			\end{equation}
			Observe that $\E[|D_a(X_t)|+|D_a(Y_t)|~|{\cal F}_t] = d_t/q$, and $\E[|C_t|~|A(X_t),A(Y_t),{\cal F}_t] = |R_t||Q_t| p \le \lambda |Q_t|$. Since $|Q_t| = O(\sqrt{n}\log^2 n)$, taking expectations in (\ref{exp-contract}) we get
			\begin{equation}
			\E[d_{t+1}\,|\,{\cal F}_t] \le d_t - \frac{d_t}{q} + O\left( \sqrt{n} \log^3 n\right) \le \left(1 - \frac{1}{2q}\right)d_t \nonumber
			\end{equation}
			provided $d_t > \omega(n)$. Thus, Markov's inequality implies $d_{T} \le \omega(n)$ for some $T=O(\log \log n)$ w.h.p. Note that for larger values of $T$, this argument immediately provides stronger bounds for $d_T$, but neither our analysis nor the order of the coupling time benefits from this. \end{proof}
		
		{\bf Coupling II:} Assume now that $d_t \le \omega(n)$ and let $I_{\textsc{m}}(X_t)$ and $I_{\textsc{m}}(Y_t)$ denote the isolated vertices in $M(X_t)$ and $M(Y_t)$ respectively. The activation in $X_t \setminus I_{\textsc{m}}(X_t)$ and $Y_t \setminus I_{\textsc{m}}(Y_t)$ is coupled as in Coupling I, except we condition on the event that ${\cal L}(X_t)$ and ${\cal L}(Y_t)$ are activated, which occurs with probability $1/q$. This first part of the activation could activate a different number of vertices from each copy of the chain; let $\rho_t$ be this difference. 
		
		First we show that $\rho_t = O(\sqrt{n})$ with probability $\Omega(1)$. By Lemma \ref{final-drift} (with $A=2$), we have $|L_1(X_t)-L_1(Y_t)| \!=\! O(\sqrt{n})$ with probability $\Omega(1)$. If this is the case, then $||D(X_t)|-|D(Y_t)|| = O(\sqrt{n})$. Also, since $\sum_{j \ge 2} L_j(X_t)^2 \!=\! O(n)$ and $\sum_{j \ge 2} L_j(Y_t)^2 \!=\! O(n)$, by Hoeffding's inequality the numbers of active vertices from $D(X_t)$ and $D(Y_t)$ differ by at most $O(\sqrt{n})$ with probability $\Omega(1)$. Thus, $\rho_t \!=\! O(\sqrt{n})$ with probability $\Omega(1)$.
		
		Now we show how to couple the activation in $I_{\textsc{m}}(X_t)$, $I_{\textsc{m}}(Y_t)$ in a way such that $|A(X_t)|\!=\!|A(Y_t)|$ with probability $\Omega(1)$. The number of active isolated vertices from $I_{\textsc{m}}(X_t)$ is binomially distributed with parameters $|I_{\textsc{m}}(X_t)|$ and $1/q$, and similarly for $I_{\textsc{m}}(Y_t)$. Hence, the activation of the isolated vertices may be coupled using the binomial coupling from Section \ref{bc}. Since $|I_{\textsc{m}}(X_t)| \!=\!|I_{\textsc{m}}(Y_t)|\!=\! \Omega(n)$ and $\rho_t \!=\! O(\sqrt{n})$, Lemma \ref{bc-str} 
		implies that this coupling corrects the difference $\rho_t$ with probability $\Omega(1)$. If this is the case, then by coupling the edge sampling bijectively as in Coupling I, we ensure that $L_1(X_{t+1}) = L_1(Y_{t+1})$ and $d_{t+1} \le \omega(n)$ with probability $\Omega(1)$. 
		
		{\bf Coupling III:} Assume $L_1(X_0) \!=\! L_1(Y_0)$ and $d_0 \le \omega(n)$. The component activation is coupled as in Coupling II, but we do not require the two large components to be active; rather, we just couple their activation together. 
		
		If $L_1(X_t) = L_1(Y_t)$, then $|D(X_t)|=|D(Y_t)|$ and thus the expected number of active vertices from $D(X_t)$ and $D(Y_t)$ is the same. Consequently, since $d_t \le \omega(n)$, Hoeffding's inequality implies $\rho_t = O\left(\sqrt{n}\log^{-1} n\right)$ w.h.p. Let ${\cal F}_t$ be the event that the coupling of the isolated vertices succeeds in correcting the error $\rho_t$. Since $|I_{\textsc{m}}(X_t)|=|I_{\textsc{m}}(Y_t)|=\Omega(n)$, ${\cal F}_t$ occurs with probability $1-O(\log^{-1} n)$ by Lemma \ref{bc-str}. If this is the case, the updated part of both configurations will have the same component structure; thus, $L_1(X_{t+1}) = L_1(Y_{t+1})$ and $d_{t+1} \le d_t$. Hence, if ${\cal F}_t$ occurs for all $0 \le t\le T$, then $X_T$ and $Y_T$ fail to have the same component structure only if at least one of the initial components was never activated. For $T=O(\log n)$ this occurs with at most constant probability. Since ${\cal F}_t$ occurs for $T=O(\log n)$ consecutive steps with at least constant probability, then $X_T$ and $Y_T$ have the same component structure with probability $\Omega(1)$. 
		
		Couplings I, II and III succeed each with at least constant probability. Thus, the overall coupling succeeds with probability $\Omega(1)$, as desired.\end{proof}
	
	\noindent
	In the sub-critical case we may assume also that $L_1(X_0)$ and $L_1(Y_0)$ are $O(\log n)$. Therefore, a simplified version of the same coupling works since Coupling II is not necessary.
	
	\begin{cor}\label{coupling-scs-1} Suppose $\lambda < \ls$ and $X_0$ and $Y_0$ are as in Lemma \ref{coupling-scs}. Then, there exists a coupling of the CM steps such that $X_T$ and $Y_T$ have the same component structure with probability $\Omega(1)$, for some $T=O(\log n)$.
	\end{cor}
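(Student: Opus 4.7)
The plan is to mirror the three-phase construction of Lemma \ref{coupling-scs}, but to omit Coupling~II entirely: in the sub-critical regime there is no macroscopic component whose size has to be matched between the two copies, so the contraction step of Coupling~I delivers configurations that can be fed directly into Coupling~III. The simplification of the hypotheses is that we may now additionally assume $L_1(X_0), L_1(Y_0) = O(\log n)$ (the conclusion of Lemma \ref{sub-contraction}), which replaces condition~(iii) of Lemma \ref{coupling-scs} and removes the need for a phase that corrects $O(\sqrt{n})$ discrepancies in the largest-component sizes.

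First I would verify that the hypotheses (i), (ii), (iv), together with the sub-critical bound $L_1(X_t), L_1(Y_t) = O(\log n)$, are preserved throughout the $T = O(\log n)$ steps of this phase. Preservation of $L_1 = O(\log n)$ follows from iterating Fact \ref{no-large-comp} together with the final contraction argument of Lemma \ref{sub-contraction}: starting from a configuration with no large component, the percolation sub-step is sub-critical so every new component is $O(\log n)$ w.h.p.\ at each step, and a union bound gives the invariant over $O(\log n)$ steps. Preservation of $\sum_{j \ge 2} L_j^2 = O(n)$ uses the supermartingale argument from the proof of Fact \ref{sup-first-drift}, with the sub-critical susceptibility bound from Lemma \ref{rg-suscept} in place of its super-critical counterpart. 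Preservation of $I(X_t), I(Y_t) = \Omega(n)$ follows because, by Hoeffding's inequality, each CM step activates $\Omega(n)$ vertices w.h.p., after which Lemma \ref{rg-isolated} guarantees a linear number of isolated vertices in the updated configuration.

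Next I would apply Coupling~I essentially verbatim. Since all components have size $O(\log n)$, the distinguished roles of $\mathcal{L}(X_t)$ and $\mathcal{L}(Y_t)$ in Lemma \ref{coupling-scs} are inessential and can be absorbed into the matching $W_t$, which now pairs components of equal size across the two chains. The only estimate that needs to be rederived is the bound on $|Q_t|$ in Claim \ref{coup-I}: Hoeffding's inequality applied to $|A(X_t)|$ and $|A(Y_t)|$ (valid since $\sum L_j^2 = O(n)$) gives $|Q_t| = \bigl||A(X_t)|-|A(Y_t)|\bigr| = O(\sqrt{n\log n})$ w.h.p., which is more than enough to yield $\E[d_{t+1}\mid {\cal F}_t] \le (1-1/(2q))\,d_t$ whenever $d_t > \omega(n) = n/\log^4 n$. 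Hence $d_T \le \omega(n)$ after $T = O(\log\log n)$ steps w.h.p. Then I would skip directly to Coupling~III. Once $d_t \le \omega(n)$, Hoeffding's inequality gives $\rho_t = O(\sqrt{n}/\log n)$ w.h.p., and Lemma \ref{bc-str} applied to the $\Omega(n)$ matched isolated vertices in $I_{\textsc{m}}(X_t)$ and $I_{\textsc{m}}(Y_t)$ corrects $\rho_t$ with probability $1 - O(\log^{-1} n)$ per step. Iterating for $T = O(\log n)$ steps, with at least constant probability every initial unmatched component is activated and dissolved into $O(\log n)$-sized matched pieces, so $X_T$ and $Y_T$ agree in component structure with probability $\Omega(1)$.

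The main obstacle is bookkeeping rather than ingenuity: one has to check that the simplified matching $W_t$, which no longer sequesters a giant component, still leaves only a negligible excess at each step, and that the tail bounds in Claim \ref{coup-I} and Coupling~III tighten appropriately under the stronger sub-critical assumption $L_1 = O(\log n)$. In particular, the bound $|Q_t| = O(\sqrt{n\log n})$, replacing the $O(\sqrt{n}\log^2 n)$ estimate of Lemma \ref{coupling-scs}, has to be threaded through the contraction computation with the correct choice of threshold $\omega(n)$. All of the probabilistic machinery---Hoeffding's inequality together with Lemmas \ref{rg-isolated}, \ref{rg-suscept} and \ref{bc-str}---is already available from the preceding sections.
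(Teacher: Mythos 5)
Your proposal is correct and follows essentially the same route as the paper, which disposes of this corollary with the remark that in the sub-critical case one may assume $L_1(X_0),L_1(Y_0)=O(\log n)$ and run the coupling of Lemma \ref{coupling-scs} with Coupling II omitted. Your added bookkeeping (preservation of the invariants, the improved bound on $|Q_t|$, and the $\rho_t = O(\sqrt{n}/\log n)$ estimate feeding into Lemma \ref{bc-str}) is consistent with the paper's argument.
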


%\pagebreak
\subsection{Coupling to the same configuration}\label{sub-section:phase-four}

\begin{lemma}\label{coupling-ec} Let $X_0$, $Y_0$ be two random-cluster configurations with the same component structure. Then, there exists a coupling of the CM steps such that after $T=O(\log n)$ steps $X_T=Y_T$ w.h.p.
\end{lemma}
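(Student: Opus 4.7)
The plan is a two-stage coupling that exploits the fact that CM steps re-sample edges freshly within activated components, erasing all prior internal structure. In the first stage, I use a size-preserving matching ${\cal M}_t$ of the components of $X_t$ and $Y_t$ (which exists by hypothesis) to couple activations jointly (one coin per matched pair) and to couple the edge re-sampling through a vertex bijection $b_t : A(X_t) \to A(Y_t)$ respecting ${\cal M}_t$, so that $\{u,v\}$ is added to $X_{t+1}$ iff $\{b_t(u), b_t(v)\}$ is added to $Y_{t+1}$. Once an entire matched pair of components has been jointly activated and re-sampled, the pair is related by this bijection, and subsequent coupled steps preserve the relation (since the new matching can be taken to extend $b_t$ on the just-re-sampled portion). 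By a coupon-collector argument---each matched pair is activated with probability $1/q$ at each step, and there are at most $O(n/\log n)$ small pairs plus at most one giant---every matched pair is jointly activated at least once within $O(\log n)$ steps w.h.p., at which point a single vertex bijection $\sigma_T : V \to V$ with $\sigma_T(X_T) = Y_T$ exists.

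In the second stage, the same coupling is continued while $\sigma_t$ is gradually driven to the identity. The key freedom is that $\sigma_{t+1}$ may be replaced by $\sigma_t$ post-composed with any automorphism of $Y_{t+1}$; in particular, whenever $v$ and $\sigma_t(v)$ are both isolated vertices of $Y_{t+1}$, the transposition $(v, \sigma_t(v))$ is such an automorphism and may be used to enforce $\sigma_{t+1}(v) = v$. Let $d_t = |\{v \in V : \sigma_t(v) \neq v\}|$. By Lemma~\ref{rg-isolated} applied to the re-sampled active set (which has size $\Omega(n)$ by the bounds inherited from Lemma~\ref{coupling-scs}), each displaced vertex has a constant probability per step of being ``fixable'' by such a swap, yielding a geometric contraction $\E[d_{t+1} \mid {\cal F}_t] \le (1-c)\, d_t$ for some constant $c > 0$. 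Iterating over $O(\log n)$ steps gives $d_T = 0$, and hence $X_T = Y_T$, with high probability.

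The main obstacle is the bookkeeping of $\sigma_t$ during the second stage: fixing a vertex inside a $k$-cycle of $\sigma_t$ leaves a $(k-1)$-cycle behind, so the contraction must be phrased in terms of the total displacement $d_t$ rather than the cycle count, and one must verify that at each step a linear number of isolated vertices in $Y_t$ are simultaneously available to realize the required transpositions within the automorphism group. Both points are handled by the structural assumptions carried over from Lemma~\ref{coupling-scs}, in particular the linear number of isolated vertices in both configurations and the stability of the component-size profile throughout the $O(\log n)$ steps of this phase.
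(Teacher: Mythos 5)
Your two-stage coupling is correct in outline but genuinely different from the paper's argument. The paper runs a single stage: it maintains a vertex bijection $B_t$ matching components of equal size, couples activation through $B_t$ and percolation through $B_{t+1}$, and declares a vertex $w$ ``fixed'' ($B_{t+1}(w)=w$) as soon as $w$ is active in \emph{both} copies simultaneously, an event of probability at least $1/q^2$ per step because the two relevant activation coins are either identical or independent; fixed vertices stay fixed, so after $O(\log n)$ steps all vertices are fixed w.h.p., with no use of isolated vertices or component-size information. Your Stage~1 (coupon-collecting the original matched components so that the accumulated bijection becomes a genuine isomorphism $\sigma_T$ with $\sigma_T(X_T)=Y_T$) is a valid and conceptually clean intermediate step; indeed, once $\sigma_t$ is an isomorphism, $\sigma_T=\mathrm{id}$ literally gives $X_T=Y_T$, whereas the paper's $B_t$ is never an isomorphism and its final inference implicitly also needs disagreeing edges to be re-sampled after both endpoints are fixed. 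Your Stage~2 (post-composing with automorphisms of $Y_{t+1}$ permuting isolated vertices) can be pushed through: for a displaced $v$, the $Y_t$-components of $v$ and $\sigma_t(v)$ are both activated with probability at least $1/q^2$, conditioned on which both vertices come out isolated after percolation with probability at least about $e^{-2\lambda}$, so each displaced vertex is fixable with constant probability per step; the required partial injection among isolated vertices of $Y_{t+1}$ always extends to an automorphism that fixes all previously fixed vertices, so $d_t$ never increases and contracts geometrically in expectation.

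The point you should repair is your reliance on ``structural assumptions carried over from Lemma~\ref{coupling-scs}'' (a linear number of isolated vertices, $O(n/\log n)$ components, stability of the size profile). These are not hypotheses of Lemma~\ref{coupling-ec}, and two configurations with the same component structure need not contain a single isolated vertex, so as written you are proving a weaker statement than the one claimed. Fortunately your argument does not actually need them: the Stage~1 coupon collector only requires a union bound over at most $n$ original components, and in Stage~2 the probability that two designated active vertices are isolated after the percolation step is $(1-p)^{O(n)}=\Omega(1)$ unconditionally, so Lemma~\ref{rg-isolated} and the $\Omega(n)$ lower bound on the active set are superfluous. With that dependence removed (or with the extra hypotheses explicitly added, which would still suffice for the paper's Phase~4 application), your proof is sound, though noticeably longer and more delicate in its bookkeeping than the paper's one-stage fixing argument.
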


\begin{proof}
	Let $B_t$ a bijection  between the vertices of $X_t$ and $Y_t$. We first describe how to construct $B_0$. Consider a maximal matching between the  components of $X_0$ and $Y_0$ with the restriction that only components of equal size are matched to  each other. Since the two configurations have the same component structure all components are matched. Using this matching, vertices between matched components are mapped arbitrarily to obtain $B_0$.
	
	Vertices mapped to themselves we call ``fixed''. At time $t$, the component activation is coupled according to $B_t$. That is, if $B_t(u) = v$ for $u \in X_t$ and $v \in Y_t$, then the components containing $u$ and $v$ are simultaneously activated with probability $1/q$. $B_{t+1}$ is adjusted such that if a vertex $w$ becomes active in both configurations then $B_{t+1}(w)=w$; the rest of the activated vertices are mapped arbitrarily in $B_{t+1}$ and the inactive vertices are mapped like in $B_t$. The percolation step at time $t$ is then coupled using $B_{t+1}$. That is, the re-sampling of the active edge $(u,v) \in X_t$ is coupled with the re-sampling of the active edge $(B_{t+1}(u),B_{t+1}(v)) \in Y_t$.
	
	This coupling ensures that the component structures of $X_t$ and $Y_t$ remain the same for all $t \ge 0$. Moreover, once a vertex is fixed it remains fixed forever. The probability that a vertex is fixed in one step is $1/q^2.$ Therefore, after $O(\log n)$ steps the probability that a vertex is not fixed is at most $1/n^2$. A union bound over all vertices implies that $X_T = Y_T$ w.h.p.\ after $T = O(\log n)$ steps.
\end{proof}

%\pagebreak
\section{Mixing time lower bounds}\label{sec:lower-bounds}

In this section we prove the exponential lower bound on the mixing time of the CM dynamics for $\lambda$ in the critical window $(\ls,\lS)$, as stated in Theorem \ref{thm:intro2} in the introduction. We also prove a $\Omega(\log n)$ lower bound in the ``fast mixing'' regime, showing that our upper bounds in Section \ref{sec:upper-bounds} are tight. 

Recall from the introduction that when $q=2$ and $\lambda < \ls = \lambda_c$, the SW dynamics mixes in $\Theta(1)$ steps and thus the CM dynamics requires $\Theta(\log n)$ additional steps to mix. This is due to the fact that the CM dynamics may require as many steps to activate all the components from the initial configuration.

We will reuse some notation from the previous sections. Namely, we will use $A_t$ for the number of activated vertices in sub-step {\rm (i)} of the CM dynamics at time $t$, ${\cal L}(X_t)$ for the largest component in $X_t$ and $L_i(X_t)$ for the size of the $i$-th largest component. We will also write $\theta_t n$ for $L_1(X_t)$ and use ${\cal E}_t$ for the event that ${\cal L}(X_t)$ is activated.
%, and $\Delta_t$ for $|L_1(X_t)-\theta_r n|$.

\begin{thm}\label{lb} For any $q > 1$, the mixing time of the CM dynamics is $\exp(\Omega(\sqrt{n}))$ for $\lambda \in (\lambda_s,\lambda_S)$, and $\Omega(\log n)$ for $\lambda \not\in [\lambda_s,\lambda_S]$.
\end{thm}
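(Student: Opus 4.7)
\textbf{Exponential bound}, $\lambda \in (\lambda_s, \lambda_S)$. By Lemma \ref{ls} this regime is non-empty only for $q > 2$ (with $\lambda_S = q$), and by Fact \ref{f-roots} the drift function $f$ has exactly two positive roots $\theta^\ast < \theta_r$ in $(\theta_{\min}, 1]$, with $f > 0$ on $(\theta_{\min}, \theta^\ast) \cup (\theta_r, 1]$ and $f < 0$ on $(\theta^\ast, \theta_r)$. Exactly as in Sections \ref{upper-sub}--\ref{section-sup}, the one-step drift of $\theta_t = L_1(X_t)/n$ is approximately $-f(\theta_t)/q$ plus fluctuations of order $O(1/\sqrt{n})$, so $\theta_r$ acts as a stable fixed point and the ``disordered'' phase ($L_1 = O(\log n)$) as a second attractor, separated by the repelling point $\theta^\ast$. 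My plan is a bottleneck / conductance argument: initialize in the basin that $\pi$ \emph{disfavors} (the empty configuration when $\lambda \ge \lambda_c$, a configuration with $L_1(X_0) = \theta_r n$ when $\lambda < \lambda_c$) and show that from any configuration in the starting basin, the probability that $L_1(X_{t+1})$ crosses past $\theta^\ast n$ in one step is at most $\exp(-\Omega(\sqrt{n}))$. This one-step crossing bound combines Hoeffding's inequality for $|A_t|$ (controlled via $\sum_j L_j(X_t)^2 = O(n)$ from Lemma \ref{rg-suscept}), Lemma \ref{lemma:rg-sup} for the largest post-percolation component, and Lemma \ref{drift-bounds} for the expected drift. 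A union bound over $T = \exp(o(\sqrt{n}))$ consecutive steps then yields $\taumix \ge \exp(\Omega(\sqrt{n}))$ via (\ref{mt}).

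\textbf{Logarithmic bound}, $\lambda \notin [\lambda_s, \lambda_S]$. Here I would use a slow-contraction argument. For $\lambda < \lambda_s$ take as test statistic $I(X) = $ the number of isolated vertices. Starting from $X_0 = $ empty configuration we have $I(X_0) = n$, and a direct one-step calculation --- using that an inactive isolated vertex stays isolated, that an active vertex is isolated in $X_{t+1}$ with probability $(1-p)^{|A_t|-1} \approx e^{-\lambda/q}$, and that $|A_t|$ concentrates around $n/q$ by Hoeffding --- gives
\[
\E[I(X_{t+1}) \mid X_t] = (1 - 1/q)\, I(X_t) + (n/q)\, e^{-\lambda/q} + o(n),
\]
with per-step fluctuations of order $O(\sqrt{n})$. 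Iterating, $I(X_T)$ concentrates within $O(\sqrt{n})$ of $I^\ast + (1-1/q)^T (n - I^\ast)$, where $I^\ast := n e^{-\lambda/q}$ is both the unique fixed point of this recursion and (by running the same recursion under $\pi$ and using invariance) $\E_\pi[I]$. Since $I$ under $\pi$ also concentrates within $O(\sqrt{n})$ of $I^\ast$ (via Lemma \ref{rg-isolated}), the TV distance $||P^T(X_0,\cdot) - \pi||_{\textsc{\tiny TV}}$ is bounded below by a positive constant unless $(1-1/q)^T n = O(\sqrt{n})$, forcing $T = \Omega(\log n)$. For $\lambda > \lambda_S$ the same argument works with $L_1$ in place of $I$: by Lemma \ref{drift-bounds}(iii), the one-step update contracts $|L_1(X_t)/n - \theta_r|$ by a factor at most $1 - \delta/q < 1$, and the same concentration-based reasoning forces $\Omega(\log n)$ steps to reach within $O(\sqrt{n})$ of $\theta_r n$.

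The main obstacle is the exponential bound. The one-step crossing estimate needs to hold \emph{uniformly} over configurations in the starting basin, which requires showing that the chain stays concentrated near its stable fixed point throughout the $\exp(\Omega(\sqrt{n}))$-step window --- otherwise configurations drifting toward the bottleneck could render the crossing bound vacuous. The needed uniform control amounts to repackaging the random-graph concentration estimates of Section \ref{subsection:random-graphs} as large-deviation bounds and running the drift analysis of Sections \ref{upper-sub}--\ref{section-sup} in reverse; the $\sqrt{n}$ scale in the exponent is inherited directly from Lemma \ref{lemma:rg-sup}, which controls deviations of the largest post-percolation component of order $A\sqrt{n}$ by $e^{-cA^2}$.
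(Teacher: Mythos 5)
Your overall architecture for the exponential bound matches the paper's: confine the chain to a metastable set that is atypical under $\mu_{p,q}$, prove a one-step escape probability of $e^{-\Omega(\sqrt{n})}$, and union bound over $e^{o(\sqrt{n})}$ steps. But the tools you cite cannot deliver the one-step bound, and this is exactly where the content of the proof lies. Lemma \ref{lemma:rg-sup} is stated only for $A = o(\log n)$, so its $e^{-cA^2}$ tail is at best quasi-polynomial and can never produce a stretched-exponential estimate; and Lemma \ref{rg-suscept} holds only with probability $1-O(n^{-1})$, so the property $\sum_j L_j(X_t)^2 = O(n)$ cannot be maintained over $e^{\Omega(\sqrt{n})}$ steps. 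The paper's resolution is to build the relevant scale into the definition of the metastable set itself: it requires $L_2(X_t) = O(\sqrt{n})$ (with $L_1 = \Theta(\sqrt{n})$ for $\lambda_c \le \lambda < q$, and $L_1 > (\vf^*+\varepsilon)n$ for $\lambda_s < \lambda < \lambda_c$). Inside such a set, $\sum_{j\ge 2} L_j^2 = O(n^{3/2})$ holds deterministically, so Hoeffding on the number of activated vertices at scale $\varepsilon n$ gives failure probability $e^{-\Omega(\sqrt{n})}$; the percolation step is then controlled by Lemma \ref{sub-strong} (sub-critical case: $L_1 = O(\sqrt{n})$ except with probability $e^{-\Omega(\sqrt{n})}$, which is also what preserves the $L_2 = O(\sqrt{n})$ requirement) and by Lemma \ref{sup-gc-ld} (super-critical case: $\varepsilon n$-scale deviations with probability $e^{-\Omega(n)}$). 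So the $\sqrt{n}$ in the exponent comes from Hoeffding over $O(\sqrt{n})$-size components and from Lemma \ref{sub-strong}, not from Lemma \ref{lemma:rg-sup} as you assert. Note also that an ``expected drift'' statement (Lemma \ref{drift-bounds}) is not enough for a one-step escape bound: in the window $(\lambda_s,\lambda_c)$ the paper needs a with-probability-$1-e^{-\Omega(\sqrt{n})}$ statement that $L_1(X_1) \in [\phi(\theta_0)n - \rho n, \phi(\theta_0)n + \rho n]$ when the giant is activated, and then uses the sign structure of $f$ to choose $\varepsilon,\rho$ so that $\phi(\theta_0)-\rho > \vf^*+\varepsilon$.

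For the $\Omega(\log n)$ bounds there are further problems. For $\lambda < \lambda_s$, your isolated-vertex statistic requires knowing that $I$ concentrates under the stationary measure around $n e^{-\lambda/q}$ at a scale small compared with the signal $(1-1/q)^T n$; Lemma \ref{rg-isolated} says nothing of the sort (it is a crude lower bound $I > Cn$ for a $G(n,d/n)$ random graph, not a concentration statement for $\mu_{p,q}$), and the $o(n)$ per-step error in your recursion already swamps the signal at $T = \Theta(\log n)$. The paper's argument is far simpler and avoids any equilibrium estimate beyond Lemma \ref{rc-sub}: start from components of size $\Theta(\log^2 n)$; with probability $(1-n^{-1/2})^{\Theta(n/\log^2 n)} = o(1)$ all of them are activated within $\frac12\log_{q/(q-1)} n$ steps, so w.h.p.\ some component of size $\Theta(\log^2 n)$ survives, which is atypical under $\pi$ where $L_1 = O(\log n)$. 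For $\lambda > \lambda_S$, your plan is conceptually the paper's, but Lemma \ref{drift-bounds}(iii) is the wrong-direction inequality: it lower-bounds the drift magnitude (used for the \emph{upper} bound), whereas a mixing-time lower bound needs an upper bound on how much $|\theta_t - \theta_r|$ can shrink per step, namely $\phi(\theta)-\theta_r \ge \frac{q-1}{q}(\theta-\theta_r)$, which the paper derives from $\phi' > \frac{q-1}{q}$ (Fact \ref{drift-phi-prop}(iii), via Fact \ref{slow-drift}); and the stationary comparison should use Corollary \ref{cor:rc-sub} ($|L_1 - \gcrc n| = O(n^{8/9})$ w.h.p.), since $O(\sqrt{n})$ concentration under $\pi$ is not available — the paper accordingly runs only $\frac{1}{20}\log_{2\omega} n$ steps to keep the signal at $n^{9/10}$, and handles the inactive-giant steps and the propagation of conditioning through an explicit induction on the events $\Gamma_t$.
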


\begin{proof} The random-cluster model undergoes a phase transition at $\lambda_c$, so it is natural to divide the proof into four cases: $\lambda < \ls$, $\lambda \in (\ls,\lambda_c)$, $\lambda \in [\lambda_c,\lS)$ and $\lambda > \lS$. Note that when $q \le 2$ the interval $(\ls,\lS)$ is empty and the exponential lower bound is vacuously true.
	
	{\bf Case (i):} $\lambda < \ls$.\ \  Let $X_0$ be a configuration where all the components have size $\Theta(\log^2 n)$ and let $b = q/(q-1).$ The probability that a particular component is not activated in any of the first $T = \frac{1}{2}\log_b n$ steps is $(1-1/q)^T = n^{-1/2}$. Therefore, the probability that all initial components are activated in the first $T$ steps is $(1- n^{-1/2})^K$ with $K = \Theta(n/\log^2 n)$. Thus after $T$ steps $L_1(X_T) = \Theta(\log^2 n)$ w.h.p. and the result follows from Lemma \ref{rc-sub}.
	
	{\bf Case (ii):} $q > 2$ and $\lambda_c \le \lambda < \lS = q$.\ \ The idea for this bound comes from \cite{GJ}. Let $S$ be the set of graphs $G$ such that $L_1(G) = \Theta(\sqrt{n})$ and let $X_0 \in S$.  Let $\mu := \E[A_0] = n/q$; then by Hoeffding's inequality 
	$\Pr\left[\left|A_0-\mu\right| > \varepsilon n\right] \le 2\exp\left(-2\varepsilon^2\sqrt{n}\right)$.
	If $A_0 < \mu+\varepsilon n,$ the active subgraph is sub-critical for sufficiently small $\varepsilon$. Therefore, Lemma \ref{sub-strong} implies that
	$\Pr[X_1 \not\in S|X_0\in S] \le e^{-c \sqrt{n}}$
	for some constant $c>0$. Hence, $\Pr[X_1,...,X_t \in S|X_0 \in S] \ge 1 - t e^{-c\sqrt{n}} \ge 3/4$ for $t = \lfloor e^{c\sqrt{n}}/4 \rfloor$. The result again follows from Lemma \ref{rc-sup}.
	
	{\bf Case (iii):} $q > 2$ and $\lambda_s < \lambda < \lambda_c$.\ \ The intuition for this case comes directly from Figure \ref{fig:f}. In this regime, Fact \ref{f-roots} implies that the function $f(\vf) = \vf - \phi(\vf)$ has two positive zeros $\vf^*$ and $\theta_r$ in $(\theta_{\rm min},1]$ with $\vf^* < \theta_r$. Moreover, $f$ is negative in the interval $(\vf^*,\theta_r)$. Therefore, any configuration with a unique large component of size $\theta n$ with $\theta \in (\vf^*,\theta_r)$ will ``drift" towards a configuration with a bigger large component. However, a typical random-cluster configuration in this regime does not have a large component. This drift in the incorrect direction is sufficient to prove the exponential lower bound in this regime. We now proceed to formalize this intuition.
	
	Let $S$ be the set of graphs $G$ such that $L_1(G) > (\vf^*+\varepsilon) n$ and $L_2(G) = O(\sqrt{n})$ where $\varepsilon$ is a small positive constant to be chosen later. Assume $X_0 \in S$. If ${\cal L}(X_0)$ is inactive, by Hoeffding's inequality $A_0 \in I_0 := \left[(1-\theta_0)n/q-\gamma_0 n,(1-\theta_0)n/q+\gamma_0 n\right]$ with probability $1 - e^{-\Omega(\sqrt{n})}$ for any desired constant $\gamma_0 > 0$. If $A_0 \in I_0$, then, for a sufficiently small $\gamma_0$, the percolation step is sub-critical, and by Lemma \ref{sub-strong}, $\Pr[X_1 \not\in S|X_0 \in S] = e^{-\Omega(\sqrt{n})}.$
	
	When ${\cal L}(X_0)$ is active, we show that for any desired constant $\rho > 0$, $L_1(X_1) \in [\phi(\theta_0)n-\rho n,\phi(\theta_0)n+\rho n]$ with probability $1-e^{-\Omega(\sqrt{n})}$. Let $\mu_0 := \theta_0 n+(1-\theta_0)n/q$; then by Hoeffding's inequality $A_0 \in I_1 := [\mu_0-\gamma_1 n, \mu_0 +\gamma_1 n]$ with probability $1 - e^{-\Omega(\sqrt{n})}$ for any desired constant $\gamma_1 > 0$. Let $h(\theta_0) = \mu_0 n+\gamma_1 n$ and let $\ell(\theta_0)$ be a random variable distributed as the size of the largest component of a $G(h(\theta_0),p)$ random graph. Then, for any $\rho > 0$,
	\begin{eqnarray} 
	\Pr[L_1(X_1) > \phi(\theta_0)n+\rho n] &\le& \sum\limits_{a \in I_1} \Pr[L_1(X_1) > \phi(\theta_0)n+\rho n|A_0 = a]\Pr[A_0 = a] + e^{-\Omega(\sqrt{n})} \nonumber\\
	%&\le& \Pr[L_1(X_1) > \phi(\theta_0)n+\rho n|A_0 = h(\theta_0)] + e^{-\Omega(\sqrt{n})}\nonumber\\
	&\le& \Pr[\,\ell(\theta_0) > \phi(\theta_0)n+\rho n\,] + e^{-\Omega(\sqrt{n})}.\nonumber
	\end{eqnarray}
	
	Recall from Section \ref{subsection:drift} that when $\lambda < q$, $\lambda(\theta_{\rm min} + (1-\theta_{\rm min})q^{-1}) = 1$. Therefore, the $G(h(\theta_0),p)$ random graph is super-critical since $\theta_0 > \vf^* > \theta_{\rm min}$. Let $\beta = \beta(\lambda')$ with $\lambda' = \lambda h(\theta_0) / n$ where $\beta(\lambda')$ is defined in (\ref{rg-root-eq}). By Lemma \ref{sup-gc-ld}, $\ell(\theta_0) \in [\beta n - \gamma_2 n, \beta n + \gamma_2 n]$ with probability at least $1 - e^{-\Omega(n)}$ for any desired constant $\gamma_2 > 0$. Observe that if $\gamma_1 = 0$, then $\beta = \phi(\theta_0)$ by the definition of $\phi$. Then by continuity, for any constant $\delta > 0$ there exists $\gamma_1$ small enough such that $|\phi(\theta_0)-\beta| < \delta$. Thus, $\ell(\theta_0) \in [\phi(\theta_0) n - \rho n, \phi(\theta_0) n + \rho n]$ with probability $1 - e^{-\Omega(n)}$. Consequently, $\Pr[L_1(X_1) > \phi(\theta_0)n+\rho n] = e^{-\Omega(\sqrt{n})}$.  By a similar argument $\Pr[L_1(X_1) < \phi(\theta_0)n-\rho n] = e^{-\Omega(\sqrt{n})}$, and then $L_1(X_1) \in [\phi(\theta_0)n-\rho n,\phi(\theta_0)n+\rho n]$ with probability $1-e^{-\Omega(\sqrt{n})}.$ 
	
	Now we show that for suitable positive constants $\varepsilon$ and $\rho$, $\phi(\theta_0)-\rho > \vf^*+\varepsilon$; this implies $L_1(X_1) > (\vf^*+\varepsilon)n$ with probability $1-e^{-\Omega(\sqrt{n})}$. Note that Part $(ii)$ of Lemma \ref{drift-bounds} still holds when $\lambda > \lambda_s$ and $\theta \in (\vf^*,1)$. Hence, if $\theta_0 > \theta_r - \varepsilon$, then $\phi(\theta_0) > \theta_r - \varepsilon$. Therefore, we can choose $\varepsilon$ and $\rho$ such that $\phi(\theta_0)-\rho > \vf^*+\varepsilon$. If $\theta_0 < \theta_r - \varepsilon$, then $\phi(\theta_0) > \theta_0 > \vf^* + \varepsilon$ since $f$ is negative in this interval. Note that $\phi(\theta_0) - \theta_0 = -f(\theta_0)$, so in this case we can pick $\rho$ to be $-1/2$ of the maximum of $f$ in $[\vf^*+\varepsilon,\theta_r-\varepsilon]$ for a sufficiently small $\varepsilon$. Thus, $L_1(X_1) > (\vf^*+\varepsilon)n$ with probability $1-e^{-\Omega(\sqrt{n})}$. 
	
	By Lemma \ref{sub-strong}, $L_2(X_1) = O(\sqrt{n})$ with probability $1 - e^{-\Omega(\sqrt{n})}$. Hence, $\Pr[X_1 \not\in S|X_0 \in S] \le e^{-c\sqrt{n}}$ for some constant $c > 0$, and then $\Pr[X_1,...,X_t \in S|X_0 \in S] \ge 1 - t e^{-c\sqrt{n}} \ge 3/4$ for $t = \lfloor e^{c\sqrt{n}}/4 \rfloor$. The result then follows from Lemma \ref{rc-sup}.
	
	{\bf Case (iv):} $\lambda > \lS = q$.\ \ The idea for this bound comes from \cite{LNNP}. Let $\omega := q/(q-1)$ and let $\Delta_t := |L_1(X_t)-\theta_r n|$ as in Section \ref{section-sup}. We will show that $\Delta_{t+1} \ge \Delta_t / 2\omega$ w.h.p.\ provided $L_1(X_t)$ is sufficiently large and $L_2(X_t) = O(\log n)$. An inductive argument will then allow us to conclude that for a suitable starting configuration, the CM dynamics requires $\Omega(\log n)$ steps to shrink the size of the largest component to close to $\theta_r n$.
	
	We provide first some intuition on how we prove that $\Delta_{t+1} \ge \Delta_t / 2\omega$ w.h.p. Observe that if ${\cal L}(X_t)$ is inactive, we know from Section \ref{section-sup} that ${\cal L}(X_{t+1}) = {\cal L}(X_t)$ and $\Delta_{t+1}=\Delta_t$ w.h.p. When ${\cal L}(X_t)$ is active, we use a bound on the derivative of the function $f$ (as defined in Section \ref{subsection:drift}) in the interval $(\theta_r,1)$ to argue that $\phi(\theta_t)$ is close to $\theta_t$ (or, more precisely, that $\phi(\theta_t)-\theta_r$ is not much smaller than $\theta_t-\theta_r$). Hence, if $\theta_{t+1}$ is much closer to $\theta_r$ than $\theta_t$ (i.e., $\Delta_{t+1} < \Delta_t / 2\omega$), then $\theta_{t+1}$ will have to be far from $\phi(\theta_t)$, which we know from Section \ref{section-sup} is unlikely.  We now proceed to formalize this intuition.
	
	\begin{fact} \label{slow-drift} If $\theta \in (\theta_r,1]$, then $\phi(\theta)-\theta_r \ge \frac{\theta-\theta_r}{\omega}$.
	\end{fact}
	\begin{proof} By Fact \ref{drift-phi-prop}, $\phi'(\theta) > \frac{q-1}{q}$ and $\phi(\theta_r) = \theta_r$. Then, by the mean value theorem,
		\[\frac{q-1}{q} < \frac{\phi(\theta)-\phi(\theta_r)}{\theta-\theta_r} = \frac{\phi(\theta)-\theta_r}{\theta-\theta_r},\]
		and the result follows.\end{proof}
	
	\noindent
	We choose $X_0$ with $L_1(X_0) = \theta_0 n$ sufficiently large (namely, $\theta_0$ much larger than $\theta_r > \Theta_{\rm S} = 1- q/\lambda$) and $L_2(X_0) = O(\log n)$. Fact \ref{sup-first-drift} implies that the CM dynamics preserves these properties during $T = O(\log n)$ steps w.h.p., which allows us to assume that they are maintained throughout the $O(\log n)$ steps of this phase. Thus, by (\ref{equation:sup-crit:d1}), we have
	\begin{equation}
	\E[\,|\theta_{t+1} - \phi(\theta_t)|n ~|~{\cal E}_t\,] \le \gamma\sqrt{n} \label{eq:lb-exp}
	\end{equation}
	for any $t \le T$ and some constant $\gamma > 0$.
	
	Let $\Gamma_t$ be the event that $\theta_{t} n  > \theta_r n + n^\alpha/(2\omega)^t$ for some constant $\alpha > 0$ that we will choose later. Note that (\ref{eq:lb-exp}) still holds if we condition on $\Gamma_t$. Hence, Markov's inequality and Fact \ref{slow-drift} imply
	\[\Pr\left[\,|\theta_{t+1} - \phi(\theta_t)|n > \frac{1}{2}|\phi(\theta_t) -\theta_r|n ~\;\middle\vert\;~{\cal E}_t,\Gamma_t\,\right] \le \frac{2 \gamma \sqrt{n}}{|\phi(\theta_t)-\theta_r| n} \le \frac{\gamma (2\omega) \sqrt{n}}{|\theta_t-\theta_r| n}.\]
	Now, since $\theta_t > \theta_r$ and, by Lemma \ref{drift-bounds}, $\theta_t \ge \phi(\theta_t) \ge \theta_r$, we have 
	\[\Pr\left[\,|\theta_{t+1} - \phi(\theta_t)|n \le \frac{1}{2}|\phi(\theta_t) -\theta_r|n ~\;\middle\vert\;~{\cal E}_t,\Gamma_t\,\right]	\ge 1 - \frac{\gamma (2\omega)^{t+1}}{n^{\alpha-1/2}}\]
	Fact \ref{slow-drift} implies that if $\theta_{t+1} - \theta_r < \frac{\theta_t-\theta_r}{2\omega}$, then $\theta_{t+1} < \frac{\phi(\theta_t)+\theta_r}{2}$. Consequently,
	\[|\theta_{t+1}-\phi(\theta_t)| > (\phi(\theta_t)-\theta_{t+1}) + \left(\theta_{t+1}-\frac{\phi(\theta_t)+\theta_r}{2}\right) > \frac{\phi(\theta_t)-\theta_r}{2},\]
	and thus
	\begin{equation}
	\Pr\left[(\theta_{t+1}-\theta_r) n \ge \frac{(\theta_t-\theta_r)n}{2\omega}\;\middle\vert\;~{\cal E}_t,\Gamma_t \right] \ge 1 - \frac{\gamma (2\omega)^{t+1}}{n^{\alpha-1/2}}. \label{lb:case4-bound}
	\end{equation}
	If the event $\neg{\cal E}_t$ occurs, then Fact \ref{sup-no-large-comp} implies that $\theta_{t+1} n = \theta_t n$ with probability $1-O\left(n^{-1}\right)$. Hence, we can remove the conditioning on ${\cal E}_t$ in (\ref{lb:case4-bound}) by adjusting the constant $\gamma$.
	
	Moreover, if $(\theta_{t+1}-\theta_r) n \ge \frac{(\theta_t-\theta_r)n}{2\omega}$ and $\theta_t n > \theta_r n + \frac{n^\alpha}{(2\omega)^t}$, then the event $\Gamma_{t+1}$ occurs. Hence,
	\[\Pr\left[\Gamma_{t+1}\right] \ge \Pr\left[(\theta_{t+1}-\theta_r) n \ge \frac{(\theta_{t+1}-\theta_r)n}{2\omega}\;\middle\vert\;\Gamma_t\right]\Pr\left[\Gamma_t\right] \ge \left( 1 - \frac{\gamma (2\omega)^{t+1}}{n^{\alpha-1/2}}\right) \Pr\left[\Gamma_t\right].\]
	Inducting,
	\[\Pr\left[(\theta_t -\theta_r) n \ge \frac{n^\alpha}{(2\omega)^t}\right] \ge \left(1 - \frac{\gamma (2\omega)^t}{n^{\alpha-1/2}}\right)^t \ge 1 - \frac{\gamma t (2\omega)^t}{n^{\alpha-1/2}}.\]
	Hence, for $t = \frac{1}{20}\log_{2\omega} n$ and $\alpha = 19/20$, $(\theta_t-\theta_r) n \ge n^{9/10}$ w.h.p. The result then follows from Corollary \ref{cor:rc-sub}.
	%\[\Pr\left[\theta_t n-\theta_r n \ge n^{9/10}\right] \ge 1 - O\left(\frac{ \log n}{n^{9/20}}\right).\]
\end{proof}

%\pagebreak

%\pagebreak
\section{Local dynamics}\label{sec:comparison}

In this section we prove Theorem \ref{thm:intro3} from the introduction. 

\subsection{Standard background}

Let $P$ be the transition matrix of a finite, ergodic and reversible Markov chain over state space $\Omega$ with stationary distribution $\pi$, and let $1 = \lambda_1 \ge \lambda_2 \ge ... \ge \lambda_n$ denote the eigenvalues of $P$. The {\it spectral gap} of $P$ is defined by $\lambda(P) := 1 - \lambda^*$, where $\lambda^* = \max\{|\lambda_2|,|\lambda_n|\}$. The following bounds on the mixing time are standard (see, e.g., \cite{LPW}):
\begin{equation}\lambda^{-1}(P) - 1 \le \taumix (P) \le \log\left({2e}{\pi_{\rm min}^{-1}}\right)\lambda^{-1}(P)\label{gap-mix},
\end{equation}
where $\pi_{\rm min} = \min_{x \in \Omega} \pi(x)$. 

In this section we will need some elementary notions from functional analysis; for extensive background on the application of such ideas to the analysis of finite Markov chains, see \cite{SC}. If we endow $\R^{|\Omega|}$ with the inner product 
$\langle f,g \rangle_\pi = \sum_{x \in \Omega} f(x)g(x)\pi(x)$,
we obtain a Hilbert space denoted $L_2(\pi) = (\R^{|\Omega|},\langle \cdot,\cdot \rangle_\pi)$. Note that $P$ defines an operator from $L_2(\pi)$ to $L_2(\pi)$ via matrix-vector multiplication. 

Consider two Hilbert spaces $S_1$ and $S_2$ with inner products $\langle\cdot,\cdot\rangle_{S_1}$ and $\langle\cdot,\cdot\rangle_{S_2}$ respectively, and let $R:S_2 \rightarrow S_1$ be a bounded linear operator. The {\it adjoint} of $R$ is the unique operator $R^*:S_1 \rightarrow S_2$ satisfying $\langle f,Rg \rangle_{S_1} = \langle R^*f,g \rangle_{S_2}$ for all $f \in S_1$ and $g \in S_2$. If $S_1 = S_2$, $R$ is {\it self-adjoint} when $R = R^*$. If $R$ is self-adjoint, it is also {\it positive} if $\forall g \in S_2$, $\langle R g, g \rangle_{S_2} \ge 0$. 
%The {\it operator norm} of $R$ is
%\[||R||_{H_2 \rightarrow H_1} :=  \max_{||g||_{H_2} = 1} ||Rg||_{H_1}.\]

\subsection{A comparison technique for Markov chains}\label{subsection:comparison}

Let $H=(V,E)$ be an arbitrary finite graph and let $\Omega_{\rm E} = \{(V,A):~A \subseteq E\}$ be the set of random-cluster configurations on~$H$. Let $P$ be the transition matrix of a finite, ergodic and reversible Markov chain over $\Omega_{\rm E}$ with stationary distribution $\mu=\mu_{p,q}$. For $r \in \N$, let $\Omega_{\rm V} = \{0,1,\ldots,r-1\}^{V}$ be the set of ``$r$-labelings" of $V$, and let $\Omega_{\rm J} = \Omega_{\rm V} \times \Omega_{\rm E}$. Assume $P$ can be decomposed as a product of {\it stochastic} matrices of the form
\begin{equation}
P = M\left(\prod_{i=1}^m T_i \right)M^*, \label{matrix-prod-1}
\end{equation}
where:
\begin{enumerate}[(i)]
	\item $M$ is a $|\Omega_{\rm E}| \times |\Omega_{\rm J}|$ matrix indexed by the elements of $\Omega_{\rm E}$ and $\Omega_{\rm J}$ such that $M(A,(\sigma,B)) \neq 0$ only if $A=B$ for all $A \in \Omega_{\rm E},~(\sigma,B) \in \Omega_{\rm J}$.
	
	\item Each $T_i$ is a $|\Omega_{\rm J}| \times |\Omega_{\rm J}|$ matrix indexed by the elements of $\Omega_{\rm J}$ and reversible w.r.t.\ the distribution $\nu = \mu M$, and such that $T_i((\sigma,A),(\tau,B)) \neq 0$ only if $\sigma=\tau$ for all $(\sigma,A),(\tau,B) \in \Omega_{\rm J}$. 
	\item $M^*$ is a $|\Omega_{\rm J}| \times |\Omega_{\rm E}|$ matrix such that $M^*:L_2(\mu)\rightarrow L_2(\nu)$ is the adjoint of $M:L_2(\nu)\rightarrow L_2(\mu)$.
\end{enumerate}
In words, $M$  assigns a (random) $r$-labeling to the vertices of $H$; $(\prod_{i=1}^m T_i)$ performs a sequence of $m$ operations $T_i$, each of which updates some edges of $H$; and $M^*$ drops the labels from the vertices. These properties imply that $M^*((\sigma,A),B) = \1(A=B)$ and $MM^* = I$. 

Consider now the matrix 
\begin{equation}
P_{\rm L} = M\left(\frac{1}{m}\sum_{i=1}^m T_i \right)M^*.\label{matrix-prod-2}
\end{equation}
%Note that $P_{\rm L}$ and $P$ differ in that $P_{\rm L}$ performs an edge update $T_i$ chosen u.a.r., while $P$ performs all the updates $T_i$. 
It is straightforward to verify that $P_{\rm L}$ is also reversible w.r.t.~$\mu$. 
The following theorem, which generalizes a recent result of Ullrich~\cite{Ullrich1,Ullrich2}, relates
the spectral gaps of $P$ and $P_{\rm L}$ up to a factor of~$O(m\log m)$.

%, but they can be reproduced in this more general setting.

\begin{thm}\label{thm:ullrich-general}
	%[adapted from \cite{Ullrich1,Ullrich2}]
	If $M$, $M^*$ and $T_i$ are stochastic matrices satisfying (i)--(iii) above, and the $T_i$'s are idempotent commuting operators, then
	%If the $T_i$'s above are positive, idempotent and commuting operators, then
	\[\lambda(P_{\rm L}) \le \lambda(P) \le 8 m \log m \cdot \lambda(P_{\rm L}).\]
\end{thm}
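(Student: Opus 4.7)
The plan is to reduce both inequalities to a comparison of Dirichlet forms for $T:=\prod_{i=1}^m T_i$ and $\bar T:=\tfrac{1}{m}\sum_{i=1}^m T_i$ on $L_2(\nu)$, and then exploit the fact that commuting self-adjoint idempotents are simultaneously diagonalizable. Recall that for a reversible positive semi-definite (PSD) operator $Q$ on $L_2(\mu)$, the Dirichlet-form characterization gives $\lambda(Q)=\inf_{f:\mu(f)=0,\,f\ne 0}\langle (I-Q)f,f\rangle_\mu/\|f\|_\mu^2$. My first preliminary step is to verify positivity: each $T_i$ is self-adjoint on $L_2(\nu)$ (by reversibility w.r.t.~$\nu$) and idempotent (by assumption), hence an orthogonal projection. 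Commutativity then makes $T$ itself an orthogonal projection (onto the intersection of the ranges of the $T_i$) and makes $\bar T$ a positive operator with spectrum in $[0,1]$. Since $\langle MXM^*f,f\rangle_\mu=\langle XM^*f,M^*f\rangle_\nu\ge 0$ for any positive $X$, both $P$ and $P_{\rm L}$ are PSD on $L_2(\mu)$, so their absolute spectral gaps as defined in Section~\ref{sec:comparison} equal $1-\lambda_2$ and coincide with the Dirichlet-form gap.

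Next I would verify that $MM^*=I$ on $L_2(\mu)$. Condition (i) forces every nonzero entry of $M^*$ to satisfy $A=B$, and the adjoint requirement (iii) then pins down $M^*((\sigma,A),B)=\1(A=B)$ (i.e.~$M^*$ just forgets the labeling) together with the joint-distribution identity $\nu((\sigma,A))=M(A,(\sigma,A))\,\mu(A)$. A direct calculation gives $(MM^*f)(A)=f(A)\sum_\sigma M(A,(\sigma,A))=f(A)$. In particular $\|M^*f\|_\nu=\|f\|_\mu$, and writing $g:=M^*f$,
\[\mathcal{E}_P(f,f)=\langle(I-T)g,g\rangle_\nu, \qquad \mathcal{E}_{P_{\rm L}}(f,f)=\langle(I-\bar T)g,g\rangle_\nu.\]

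Now I would invoke simultaneous diagonalization: since the $T_i$ are commuting orthogonal projections on the finite-dimensional Hilbert space $L_2(\nu)$, there is an orthogonal decomposition $L_2(\nu)=\bigoplus_{S\subseteq[m]} V_S$ in which $T_i|_{V_S}$ is the identity if $i\in S$ and zero if $i\notin S$. On $V_S$ therefore $T$ acts as $\1(S=[m])\,I$ and $\bar T$ acts as $(|S|/m)\,I$, so decomposing $g=\sum_S g_S$ orthogonally yields
\[\langle(I-T)g,g\rangle_\nu=\sum_{S\ne[m]}\|g_S\|_\nu^2, \qquad \langle(I-\bar T)g,g\rangle_\nu=\sum_{S\ne[m]}\Bigl(1-\tfrac{|S|}{m}\Bigr)\|g_S\|_\nu^2.\]

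Since $1/m\le 1-|S|/m\le 1$ for every $S\ne[m]$, this gives pointwise $\tfrac{1}{m}\mathcal{E}_P(f,f)\le\mathcal{E}_{P_{\rm L}}(f,f)\le\mathcal{E}_P(f,f)$ for all $f\in L_2(\mu)$. Applying the variational characterization to the non-constant subspace immediately yields $\lambda(P_{\rm L})\le\lambda(P)\le m\cdot\lambda(P_{\rm L})$, which is in fact sharper than the claimed $8m\log m$ bound (so the theorem follows a fortiori; the $\log m$ factor presumably reflects the looser bound in Ullrich's original setup, where commutativity is not fully exploited). The main hurdle I anticipate is not the eigenvalue estimate at the end, but the algebraic set-up: carefully checking positivity, $MM^*=I$, and the simultaneous diagonalization on the $\nu$-weighted Hilbert space $L_2(\nu)$ (rather than the unweighted Euclidean one), so that the absolute spectral gap coincides with the Dirichlet-form gap. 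Once that framework is in place, the factor-$m$ comparison reduces to the one-line eigenvalue inequality above.
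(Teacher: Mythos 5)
Your argument is correct, and it takes a genuinely different route from the paper's. The paper gives no self-contained proof of Theorem~\ref{thm:ullrich-general}: it observes that Ullrich's proofs for the Swendsen--Wang case carry over verbatim under hypotheses (i)--(iii), and those proofs deal with the interleaving of the relabeling maps $M,M^*$ between successive updates via a coupon-collector-style argument, which is where the $8m\log m$ factor originates. You instead compare the chains one step at a time: using $MM^*=I$ and adjointness you write both Dirichlet forms on $L_2(\nu)$, namely $\langle (I-P)f,f\rangle_\mu=\langle (I-T)M^*f,M^*f\rangle_\nu$ with $T=\prod_i T_i$ and similarly for $P_{\rm L}$ with $\bar T=\frac1m\sum_i T_i$; since the $T_i$ are commuting orthogonal projections (self-adjoint by reversibility w.r.t.\ $\nu$, idempotent by hypothesis), the joint eigenspace decomposition gives $I-\bar T\preceq I-T\preceq m(I-\bar T)$, and positivity of $P$ and $P_{\rm L}$ identifies the absolute spectral gap with the $1-\lambda_2$ gap, so $\lambda(P_{\rm L})\le\lambda(P)\le m\,\lambda(P_{\rm L})$. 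This is more elementary, fully exploits the commuting-idempotent hypothesis (which the theorem assumes and Fact~\ref{Te-prop} verifies for the $T_e$'s), sidesteps the interleaving difficulty entirely, and is sharper by a $\log m$ factor, implying the stated bound for all $m\ge 2$ (the case $m=1$ is trivial since then $P=P_{\rm L}$). What the paper's route buys is economy (a citation plus the observation that Ullrich's argument uses nothing specific to SW); what yours buys is a short self-contained proof and the improved factor $m$ --- though this does not affect the final $\Otilde(n^4)$ bound of Theorem~\ref{thm:intro3}, whose dominant factors come from $|E|$ and $\log(\mu_{\rm min}^{-1})$ rather than from $\log m$.
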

%\medskip\noindent 
%{\bf Remark:} 
\noindent
We pause to note that this fact has a very attractive intuitive basis.  As noted above, 
$P_{\rm L}$ performs a single update~$T_i$ chosen u.a.r., while $P$ performs all $m$ 
updates~$T_i$, so by coupon collecting one might expect that 
$O(m \log m)$ $P_{\rm L}$ steps should suffice to simulate a single $P$ step. 
However, the proof has to take account of the fact that the $T_i$ updates are interleaved
with the vertex re-labeling operations $M$ and~$M^*$ in $P_{\rm L}$. 
The proofs in~\cite{Ullrich1} and~\cite{Ullrich2} are specific to the case where~$P$
corresponds to the SW dynamics. Our contribution is the realization 
% but a closer look at these proofs reveals 
that these proofs still go through (without essential modification) under the more general
assumptions of Theorem~\ref{thm:ullrich-general}, as well as the framework
described above that provides a systematic way of deducing $P_{\rm L}$ from any~$P$
of the form~(\ref{matrix-prod-1}).
%(We omit the proof of Theorem \ref{thm:ullrich-general} here since it does not differ significantly from those on \cite{Ullrich1,Ullrich2}.) 
%Our framework also provides a systematic way for deducing $P_{\rm L}$ from $P$.

Observe that Theorem \ref{thm:ullrich-general} relates the spectral gaps of $P$ and $P_{\rm L}$. We shall see next how to use this technology to obtain mixing time bounds for the heat-bath dynamics using the CM bounds from Sections \ref{sec:upper-bounds} and \ref{sec:lower-bounds}.

\subsection{Application to local dynamics}\label{subsection:local-dynamics}

Let $P_{\rm CM}$ and $P_{\rm HB}$ be the transition matrices of the Chayes-Machta (CM) and heat-bath (HB) dynamics respectively. In this subsection we show that $P_{\rm CM}$ can be expressed as a product of stochastic matrices equivalent to (\ref{matrix-prod-1}) and that $P_{\rm HB}$ is closely related to the corresponding matrix $P_{\rm L}$ in (\ref{matrix-prod-2}). Then, we use Theorem \ref{thm:ullrich-general} to relate the spectral gaps $\lambda(P_{\rm HB})$ and $\lambda(P_{\rm CM})$ and hence prove Theorem \ref{thm:intro3} via (\ref{gap-mix}).

In this case, $\Omega_{\rm V}= \{0,1\}^{V}$ is the set of possible ``active-inactive" labelings of $V$. Consider the $|\Omega_{\rm E}| \times |\Omega_{\rm J}|$ stochastic matrix $M$ defined by
\[M(B,(\sigma,A)) = \1(A=B)\1(A \subseteq E(\sigma)) (q-1)^{f(\sigma,A)}q^{-c(A)},\]
where $E(\sigma) = \left\{(u,v) \in E: \sigma(u)=\sigma(v) \right\}$ and $f(\sigma,A)$ is the number of inactive connected components in $(\sigma,A)$. The adjoint of $M$ is the $|\Omega_{\rm J}| \times |\Omega_{\rm E}|$ stochastic matrix $M^*((\sigma,A),B) = \1(A=B)$. Consider also the family of $|\Omega_{\rm J}| \times |\Omega_{\rm J}|$ stochastic matrices $T_e$ defined for each $e = (u,v) \in E$ as follows:
\[T_e((\sigma,A),(\tau,B)) = \1(\sigma=\tau)\left\{
\begin{array}{cll}
p    & ~~~{\rm if}~B=A \cup e, & \sigma(u) = \sigma(v) = 1; \\
1-p & ~~~{\rm if}~B=A \setminus e, & \sigma(u) = \sigma(v) = 1; \\
1    & ~~~{\rm if}~A(e)=B(e), & \sigma(u) = 0~~{\rm or}~~ \sigma(v) = 0; \\
0 & ~~~{\rm if}~A(e) \neq B(e), & \sigma(u) = 0~~{\rm or}~~ \sigma(v) = 0
\end{array}
\right.
\]
where $\sigma(v) = 1$ (resp., 0) if vertex $v$ is active (resp., inactive) in $\sigma$ and $A(e) = 1$ (resp., $A(e)=0$) if the edge $e$ is present (resp., not present) in $A$.

In words, the matrix $M$ assigns a random active-inactive labeling to a random-cluster configuration, while $M^*$ drops the active-inactive labeling from a joint configuration. The matrix $T_e$ samples $e$ with probability $p$ provided both its endpoints are active. The key observation, which we prove later, is that we can naturally express the CM dynamics as the product of these matrices:

\begin{lemma} \label{lem:representation} $P_{\rm CM} = M\left(\prod\limits_{e \in E} T_e\right)M^*$.
\end{lemma}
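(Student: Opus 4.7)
The plan is to expand the matrix product on the right-hand side and identify it term-by-term with the three sub-steps of the CM dynamics. First I would verify that $M$ is stochastic and faithfully implements the activation sub-step (i). For fixed $B_0$, summing $M(B_0,(\sigma,A))$ requires $A = B_0$ and $\sigma$ monochromatic on every component of $B_0$ (so that $B_0 \subseteq E(\sigma)$), giving
\[\sum_{\sigma:\, B_0 \subseteq E(\sigma)} q^{-c(B_0)}(q-1)^{f(\sigma,B_0)} = q^{-c(B_0)}\sum_{S \subseteq \text{comps}(B_0)} (q-1)^{|S|} = q^{-c(B_0)}\cdot q^{c(B_0)} = 1.\]
The weight $q^{-c(B_0)}(q-1)^{f(\sigma,B_0)}$ is precisely the probability that each component is independently marked active with probability $1/q$ and inactive with probability $(q-1)/q$, so $M$ samples the activation step of CM and records its outcome as the labeling $\sigma$.

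Next I would analyze $\prod_{e\in E} T_e$. Each $T_e$ only touches the $e$-coordinate of $A$ and leaves $\sigma$ fixed, so distinct $T_e$'s commute and the product is unambiguous. Iterating a single pass through the edges, starting from $(\sigma,B_0)$ with $B_0 \subseteq E(\sigma)$: on edges with both endpoints active, $T_e$ resamples $e$ independently with probability $p$; on edges with at least one inactive endpoint, $T_e$ leaves $A(e)$ unchanged. Since $B_0 \subseteq E(\sigma)$ forces $B_0(e)=0$ on every cross-edge, no intermediate $A$ acquires an edge between active and inactive vertices, so the constraint $A \subseteq E(\sigma)$ is preserved throughout. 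The resulting conditional distribution over the final edge set is exactly the one produced by sub-steps (ii)+(iii) of CM: all edges between active vertices are removed and each is independently added with probability $p$, while all other edges are unchanged.

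Finally, $M^*((\sigma,A'),B_1) = \1(A'=B_1)$ simply forgets the labeling $\sigma$. Collecting everything,
\[\bigl[M\bigl(\textstyle\prod_e T_e\bigr)M^*\bigr](B_0,B_1) \,=\, \sum_{\sigma:\, B_0 \subseteq E(\sigma)} q^{-c(B_0)}(q-1)^{f(\sigma,B_0)}\cdot \Pr\bigl[B_1 \mid \sigma, B_0\bigr],\]
where the conditional probability is the probability that sub-steps (ii)+(iii), applied with active set $\{v:\sigma(v)=1\}$, transform $B_0$ into $B_1$. Marginalizing over $\sigma$ reproduces $P_{\rm CM}(B_0,B_1)$ exactly.

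I do not expect any genuine obstacle: the work is purely bookkeeping. The only subtlety is confirming that the constraint $A\subseteq E(\sigma)$ is maintained under every $T_e$ (so that no mass escapes the support of $M$) and that the case distinction in the definition of $T_e$ correctly implements edge removal followed by Bernoulli$(p)$ reinsertion on active-active edges. Both checks are short case analyses based on the four-line definition of $T_e$.
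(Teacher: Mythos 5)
Your proposal is correct and follows essentially the same route as the paper's proof: expand $M\bigl(\prod_{e} T_e\bigr)M^*$ entrywise, identify the weight $(q-1)^{f(\sigma,A)}q^{-c(A)}$ with the component-activation probabilities, note that the product of the $T_e$'s resamples exactly the active--active edges with probability $p$ while leaving the rest fixed, and marginalize over $\sigma$. The only cosmetic difference is that you verify stochasticity of $M$ explicitly and argue probabilistically where the paper writes out the closed-form entry of ${\cal T}$ before multiplying; the content is the same.
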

\noindent
Now consider the Markov chain given by the matrix 
\[P_{\rm SU} = M\left(\frac{1}{|E|}\sum_{e \in E} T_e \right)M^*,\]
which we call the {\it Single Update (SU) dynamics} and corresponds to the matrix $P_{\rm L}$ defined in (\ref{matrix-prod-2}). Hence, $P_{\rm SU}$ is reversible w.r.t.\ to $\mu = \mu_{p,q}$. Observe that $M$ and $M^*$ clearly satisfy the assumptions of Theorem \ref{thm:ullrich-general}. Moreover, we can easily verify that the $T_e$'s also satisfy these assumptions:

\begin{fact}\label{Te-prop} The $T_e$'s defined above are idempotent commuting operators from $L_2(\nu)$ to $L_2(\nu)$. Moreover, each $T_e$ is reversible w.r.t.~$\nu = \mu M$.
	% such that $||T_e||_{\nu} = 1$ for all $e \ in E$.
\end{fact}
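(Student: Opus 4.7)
The plan is a direct case-by-case verification, since each $T_e$ touches only a single edge coordinate and leaves the vertex labeling $\sigma$ untouched. I would first write down $\nu = \mu M$ explicitly: since $M^*((\sigma,A),B) = \mathbbm{1}(A=B)$ and $MM^* = I$, one gets
\[
\nu(\sigma,A) \;=\; \mu(A)\,M(A,(\sigma,A)) \;\propto\; p^{|A|}(1-p)^{|E|-|A|}\,\mathbbm{1}(A\subseteq E(\sigma))\,(q-1)^{f(\sigma,A)},
\]
the factor $q^{c(A)}$ from $\mu$ cancelling with $q^{-c(A)}$ from $M$. With this explicit form, reversibility of $T_e$ reduces to checking detailed balance for each pair $((\sigma,A),(\sigma,B))$ where $B$ differs from $A$ only in the status of $e=(u,v)$.

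The verification then splits on $\sigma(u),\sigma(v)$. In the inactive case, $T_e$ is the identity on the $e$-coordinate, so detailed balance is trivial. In the active case ($\sigma(u)=\sigma(v)=1$), take $B = A\cup\{e\}$ with $e\notin A$; since both endpoints are active, adding $e$ does not alter the inactive-component count, so $f(\sigma,A)=f(\sigma,B)$, and the ratio $\nu(\sigma,A)/\nu(\sigma,B) = (1-p)/p$ matches the ratio $T_e((\sigma,B),(\sigma,A))/T_e((\sigma,A),(\sigma,B)) = (1-p)/p$. So $T_e$ is reversible w.r.t.\ $\nu$, hence self-adjoint on $L_2(\nu)$.

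Idempotence is immediate from the definition: if $\sigma(u)=\sigma(v)=1$, applying $T_e$ a second time re-draws the presence of $e$ independently with the same Bernoulli$(p)$ distribution, which yields the same kernel as a single application; if either endpoint is inactive, $T_e$ is the identity, which is trivially idempotent. Commutativity $T_eT_{e'} = T_{e'}T_e$ for $e\neq e'$ follows because neither operator changes $\sigma$, the action of each $T_e$ is entirely determined by $\sigma$ restricted to the endpoints of $e$, and the two operators modify distinct edge coordinates; so in any order the outcome is the same joint distribution over $(\sigma, A\triangle S)$ where $S\subseteq\{e,e'\}$.

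There is no real obstacle here beyond the bookkeeping of the cases; the only mild subtlety is verifying that the $(q-1)^{f(\sigma,A)}$ factor in $\nu$ does not upset detailed balance, which works precisely because activating edges between \emph{active} vertices leaves the inactive components unchanged, so $f$ is invariant under the single-edge moves performed by $T_e$.
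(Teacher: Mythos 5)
Your proof is correct and follows essentially the same route as the paper: identify $\nu=\mu M$ as the joint Edwards--Sokal measure and then verify detailed balance, idempotence and commutativity directly from the definition of $T_e$. The only difference is that you spell out the computations (the cancellation of $q^{c(A)}$, the invariance of $f(\sigma,\cdot)$ under edge updates between active vertices, and the $(1-p)/p$ ratio) that the paper dispatches with a citation and a ``straightforward to check.''
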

\begin{proof} The distribution $\nu$ corresponds to the joint Edwards-Sokal measure over $\Omega_{\rm J}$:
	\[\nu(\sigma,A) \propto \left(\frac{p}{1-p}\right)^{|A|} (q-1)^{f(\sigma,A)} \1( A \subseteq E(\sigma))\]
	(see, e.g., \cite{CM}). From this representation, it is straightforward to check that $T_e$ is reversible w.r.t.\ to $\nu$.
	%and hence $T_e$ is self-adjoint (see, e.g., \cite{SC}). 
	Also, from the definition of $T_e$ it follows that $T_e = T_e^2$ and $T_eT_{e'}=T_{e'}T_e$, which completes the proof.\end{proof}

%Also, $||T_e||_{\nu} = ||T_e^2||_{\nu} = ||T_eT_e^*||_{\nu} = ||T_e||_{\nu}^2$ (see, e.g., \cite[Thm. 3.9-4]{K}). Since $T_e$ is self-adjoint and idempotent, it is positive \cite[Thm. 9.5-1 \& 9.5-2]{K}. 

\noindent
In light of Lemma \ref{lem:representation} and Fact \ref{Te-prop}, we may apply Theorem \ref{thm:ullrich-general} to obtain
\begin{equation}\label{thm5.1bd}
\lambda(P_{\rm SU}) \le \lambda(P_{\rm CM}) \le 8 |E| \log |E| \cdot \lambda(P_{\rm SU}). %\nonumber
\end{equation}

The SU dynamics is closely related to the HB dynamics. Specifically, their spectral gaps are very similar, as the following fact which we will prove in a moment shows:
% The following is a simple observation about the spectral gaps of $P_{\rm HB}$ and $P_{\rm SU}$, which we will prove later.
\begin{claim}\label{nne} Let $\alpha = (q(1-p)+p)/q^2$; then, \[\alpha \lambda(P_{\rm HB}) \le \lambda(P_{\rm SU}) \le \lambda(P_{\rm HB}).\]
\end{claim}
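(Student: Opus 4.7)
The plan is to compute the single-edge operator $Q_e := M T_e M^*$ on $\Omega_{\rm E}$ explicitly, compare it edge by edge with the single-edge heat-bath kernel
\[
P_e^{\rm HB}(B, B') = \frac{\mu(B')}{\mu(B\cup\{e\}) + \mu(B\setminus\{e\})} \quad\text{for } B' \in \{B\cup\{e\},\, B\setminus\{e\}\},
\]
and translate a Dirichlet-form comparison into the desired spectral-gap inequalities, finally using positive-semidefiniteness of both chains to replace $1 - \lambda_2$ by the absolute gap $\lambda = 1 - \lambda^*$ appearing in the claim. By construction $P_{\rm HB} = \frac{1}{|E|}\sum_e P_e^{\rm HB}$ and $P_{\rm SU} = \frac{1}{|E|}\sum_e Q_e$.

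Fix $e = (u,v) \in E$ and $B \in \Omega_{\rm E}$. Sampling $M(B,\cdot)$ activates each component of $(V,B)$ independently with probability $1/q$, so the probability $\lambda_e(B)$ that both endpoints of $e$ are active equals $1/q$ if $u,v$ lie in a common component of $(V,B)$ and $1/q^2$ otherwise. Since $T_e$ leaves every edge other than $e$ fixed and, when both endpoints are active, resamples $e$ as Bernoulli$(p)$, a direct unwinding of $M T_e M^*$ yields, for $B'\ne B$,
\[
Q_e(B, B\cup\{e\}) = \lambda_e(B)\, p, \qquad Q_e(B, B\setminus\{e\}) = \lambda_e(B)\,(1-p).
\]
Plugging in $\mu(B\cup\{e\})/\mu(B\setminus\{e\}) = (p/(1-p))\,q^{c(B\cup\{e\}) - c(B\setminus\{e\})}$ and comparing with $P_e^{\rm HB}$, a short calculation shows that $Q_e(B,B') = \gamma_e(B)\, P_e^{\rm HB}(B,B')$ for every off-diagonal pair, where $\gamma_e(B) = 1/q$ if $u,v$ are connected in $B\setminus\{e\}$ and $\gamma_e(B) = \alpha$ otherwise. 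Since $q > 1$, one has $\alpha \le \gamma_e(B) \le 1/q \le 1$ in both cases.

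Summing the bound $\alpha\,P_e^{\rm HB}(B,B') \le Q_e(B,B') \le P_e^{\rm HB}(B,B')$ over $e$ and over unordered pairs $\{B,B'\}$ (using reversibility to symmetrize), the Dirichlet forms satisfy, for every $f \in L_2(\mu)$,
\[
\alpha\, \mathcal{E}_{P_{\rm HB}}(f,f) \;\le\; \mathcal{E}_{P_{\rm SU}}(f,f) \;\le\; \mathcal{E}_{P_{\rm HB}}(f,f),
\]
and the variational characterization $1 - \lambda_2(P) = \inf_f \mathcal{E}_P(f,f)/\mathrm{Var}_\mu(f)$ translates this into the same chain of inequalities between $1 - \lambda_2(P_{\rm HB})$ and $1 - \lambda_2(P_{\rm SU})$. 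To upgrade from $1 - \lambda_2$ to $\lambda = 1-\lambda^*$, I would observe that both chains are PSD: each $P_e^{\rm HB}$ is a self-adjoint idempotent on $L_2(\mu)$ (resampling $e$ twice gives the same distribution as once), hence PSD, and averaging preserves PSD; by Fact~\ref{Te-prop} the same holds for $T_e$ on $L_2(\nu)$, so $\langle f, Q_e f\rangle_\mu = \langle M^* f, T_e M^* f\rangle_\nu \ge 0$ and $P_{\rm SU}$ is PSD too.

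The main technical step is the identification $\gamma_e(B) = \alpha$ when $u,v$ are disconnected in $B\setminus\{e\}$: there the extra $q^{-1}$ factor in the $\mu$-ratio (from merging two components) and the change of $\lambda_e$ from $1/q$ to $1/q^2$ must conspire to produce exactly $\alpha = (q(1-p)+p)/q^2$, and everything else is routine Dirichlet-form bookkeeping.
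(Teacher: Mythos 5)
Your proposal is correct and follows essentially the same route as the paper: the paper also verifies $\alpha P_{\rm HB}(A,B) \le P_{\rm SU}(A,B) \le P_{\rm HB}(A,B)$ off-diagonal (calling it ``an easy exercise,'' which you spell out via $\gamma_e(B)\in\{\alpha,1/q\}$) and establishes positivity of $P_{\rm SU}$ from the $T_e$'s being self-adjoint idempotents, then invokes a comparison lemma of Ullrich. Your Dirichlet-form plus PSD argument is precisely what that cited lemma encapsulates, so you have simply made the paper's two black boxes explicit.
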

\noindent
Putting together this claim and~(\ref{thm5.1bd}) yields
\begin{equation}
\alpha \lambda(P_{\rm HB}) \le \lambda(P_{\rm CM}) \le 8 |E| \log |E| \cdot \lambda(P_{\rm HB}), \nonumber
\end{equation}
which relates the spectral gaps of $P_{\rm HB}$ and $P_{\rm CM}$ up to a factor of $\Otilde(n^2)$.
(Note that $\alpha\in[1/q^2,1/q]$, and thus $\alpha = \Theta(1)$.) 
%(Note that $\alpha\in[1/q^2,1/q]$, independent of $n$ and~$\lambda$.)
Using (\ref{gap-mix}) this relationship can be translated to the mixing times at the 
cost of a further factor of $\log(\mu_{\rm min}^{-1})$, which is $\Otilde(n^2)$ in the mean-field case.
Theorem~\ref{thm:intro3} now follows immediately from the mixing time bounds on the CM 
dynamics proved in Theorems~\ref{thm:intro1} and~\ref{thm:intro2}.

It remains only for us to supply the missing proofs of Lemma \ref{lem:representation} and Claim \ref{nne}.

\begin{proof}[Proof of Lemma \ref{lem:representation}:] Let ${\cal A}(\sigma) = \left\{(u,v) \in E: \sigma(u)=\sigma(v)=1 \right\}$ and let ${\cal T} = \prod\limits_{e \in E} T_e$. Observe that
	\[{\cal T}((\sigma,A),(\tau,B))= \1(\sigma=\tau)\1(A \setminus {\cal A}(\sigma)= B \setminus {\cal A}(\sigma)) p^{|{\cal A}(\sigma) \cap B|}(1-p)^{|{\cal A}(\sigma)|-|{\cal A}(\sigma)\cap B|}.\]
	Then, from the definitions of $M$, $M^*$ and ${\cal T}$, we obtain
	\begin{eqnarray}
	%\begin{aligned}
	& & M{\cal T}M^*(A,B) = \sum\limits_{(\sigma,C)}\sum\limits_{(\tau,D)} M(A,(\sigma,C)){\cal T}((\sigma,C),(\tau,D))M^*((\tau,D),B) \nonumber\\
	%&= \sum\limits_{(\sigma,C)}\sum\limits_{(\tau,D)} \1(A=C)\1(A \subseteq E(\sigma)) (q-1)^{f(\sigma,C)}q^{-c(C)} \1(\sigma=\tau) {\cal T}((\sigma,C),(\tau,D)) \1(B=D)  \nonumber\\
	&=& \sum\limits_{\sigma \in \Omega_{\rm V}} \1(A \subseteq E(\sigma)) (q-1)^{f(\sigma,A)}q^{-c(A)} \1(A \setminus {\cal A}(\sigma)= B \setminus {\cal A}(\sigma)) p^{|{\cal A}(\sigma) \cap B|}(1-p)^{|{\cal A}(\sigma)|-|{\cal A}(\sigma)\cap B|}.\nonumber
	%\end{aligned}
	\end{eqnarray}
	
	Now observe that if $A \subseteq E(\sigma)$, then sub-step {\rm (i)} of the CM dynamics chooses $\sigma \in \Omega_{\rm V}$ with probability $(q-1)^{f(\sigma,A)}q^{-c(A)}$. Moreover, if after sub-step {\rm (i)} the joint configuration obtained is $(\sigma,A)$, then the probability of obtaining $B$ in sub-step {\rm (iii)} is $p^{|{\cal A}(\sigma) \cap B|}(1-p)^{|{\cal A}(\sigma)|-|{\cal A}(\sigma)\cap B|}$ provided $A$ and $B$ differ only in the active part of the configuration ${\cal A}(\sigma)$. Thus, $M{\cal T}M^*(A,B) = P_{\rm CM}(A,B)$.
\end{proof}

\begin{proof}[Proof of Claim \ref{nne}:] First we show that $P_{\rm SU}$ is positive. This was already shown for $P_{\rm HB}$ in \cite[Lemma 2.7]{Ullrich3}. Recall each $T_e$ is reversible w.r.t.\ to $\nu$, and thus the operator $T_e:L_2(\nu) \rightarrow L_2(\nu)$ is self-adjoint (see, e.g., \cite{SC}). Since $T_e$ is self-adjoint and idempotent, it is also positive for every $e \in E$ (see, e.g. \cite[Thm. 9.5-1 \& 9.5-2]{K}). Therefore, for $f \in \R^{|\Omega_{\rm E}|}$ we have
	\[\langle P_{\rm SU} f , f \rangle_\mu = \frac{1}{|E|} \sum_{e \in E}\langle MT_eM^* f , f \rangle_\mu = \frac{1}{|E|} \sum_{e \in E}\langle T_eM^* f , M^*f \rangle_\nu \ge 0,\]
	and thus $P_{\rm SU}$ is positive.
	% and all its eigenvalues are non-negative. \marginpar{{\bf check!!!}}
	
	Given a random-cluster configuration $(V,A)$, it is straightforward to check that one step of
	the SU dynamics is equivalent to the following discrete steps:
	\begin{enumerate}[(i)]
		\itemsep0em
		\item {\it activate\/} each connected component of $(V,A)$ independently with probability $1/q$;
		\item pick $e \in E$ u.a.r.;
		\item if both endpoints of $e$ are active, add $e$ with probability $p$ and remove it otherwise. (If either endpoint of $e$ is inactive, do nothing.)
		%\item 
	\end{enumerate}
	Similarly, recalling the definition of the HB dynamics from the introduction, it is easy to check that
	each step is equivalent to the following:
	\begin{enumerate}[(i)]
		\itemsep0em
		\item[(i)] pick an edge $e\in E$ u.a.r.;
		\item[(ii)] include the edge~$e$ in the new configuration with probability $p_e$, where $$
		p_e = \begin{cases}
		{\textstyle\frac{p}{p+q(1-p)}} & \text{if $e$ is a cut edge in $(V,A\cup \{e\})$;}\\
		p & \text{otherwise.}
		\end{cases} $$
		(The rest of the configuration is left unchanged.)
	\end{enumerate}
	Note that $e$ is a cut edge in $(V,A\cup \{e\})$ iff changing the current configuration
	of~$e$ changes the number of connected components.
	
	Using these definitions for the SU and HB dynamics, it is an easy exercise to check that for $A \neq B$ and $\alpha = (q(1-p)+p)/q^2$,
	\[\alpha P_{\rm HB}(A,B) \le P_{\rm SU}(A,B) \le P_{\rm HB}(A,B).\] 
	Since $P_{\rm SU}$ is positive, the result follows from Lemma 2.5 in \cite{Ullrich3}.
\end{proof}

\bibliography{mybib}{}
\bibliographystyle{plain} 

\end{document}